\documentclass[12pt,reqno]{amsart}
\usepackage{amsopn,amssymb,mathrsfs,mathrsfs,a4wide,url}
\usepackage[utf8]{inputenc}

\newcommand{\cl}[1]{\ensuremath{\overline{{#1}}}}
\newcommand{\ds}{\displaystyle}
\newcommand{\ep}{\ensuremath{\varepsilon}}

\newcommand{\lint}[4]{\ensuremath{\int_{#1}^{#2}{#3}\:\mathrm{d}{#4}}}
\newcommand{\lp}[1]{\ensuremath{\ell_{#1}}}

\newcommand{\map}[3]{\ensuremath{{#1}:{#2}\to{#3}}}
\newcommand{\me}{\ensuremath{\mathrm{e}}}
\newcommand{\N}{\mathbb{N}}
\newcommand{\n}[1]{\ensuremath{\left\|{#1}\right\|}}
\newcommand{\ndot}{\ensuremath{\left\|\cdot\right\|}}

\newcommand{\pn}[2]{\ensuremath{\left\|{#1}\right\|_{#2}}}
\newcommand{\pndot}[1]{\ensuremath{\left\|\cdot\right\|}_{#1}}
\newcommand{\R}{\mathbb{R}}
\newcommand{\restrict}[1]{\ensuremath{\!\!\upharpoonright_{#1}}}
\newcommand{\set}[2]{\ensuremath{\left\{{#1}\;:\;\,{#2}\right\}}}
\newcommand{\ts}{\textstyle}
\newcommand{\tn}[1]{\ensuremath{\tri{#1}\tri}}
\newcommand{\tndot}{\ensuremath{\tri\cdot\tri}}
\newcommand{\tri}{{\displaystyle |\kern-.9pt|\kern-.9pt|}}
\newcommand{\ttri}{|\kern-.9pt|\kern-.9pt|}
\newcommand{\ttrin}{\ttri\cdot\ttri}

\DeclareMathOperator{\conv}{conv}
\DeclareMathOperator{\dom}{dom}
\DeclareMathOperator{\ext}{ext}

\DeclareMathOperator{\lspan}{span}

\DeclareMathOperator{\ran}{ran}

\DeclareMathOperator{\supp}{supp}

\numberwithin{equation}{section}

\allowdisplaybreaks

\newtheorem{thm}{Theorem}[section]
\newtheorem{cor}[thm]{Corollary}
\newtheorem{lem}[thm]{Lemma}
\newtheorem{prop}[thm]{Proposition}
\newtheorem{prob}[thm]{Problem}

\theoremstyle{definition}
\newtheorem{defn}[thm]{Definition}
\newtheorem{example}[thm]{Example}

\begin{document}
\title[Approximation of norms on Banach spaces]{Approximation of norms on Banach spaces}
\begin{abstract}
Relatively recently it was proved that if $\Gamma$ is an arbitrary set, then any equivalent norm on $c_0(\Gamma)$ can be approximated uniformly on bounded sets by polyhedral norms and $C^\infty$ smooth norms, with arbitrary precision. We extend this result to more classes of spaces having uncountable symmetric bases, such as preduals of the `discrete' Lorentz spaces $d(w,1,\Gamma)$, and certain symmetric Nakano spaces and Orlicz spaces. We also show that, given an arbitrary ordinal number $\alpha$, there exists a scattered compact space $K$ having Cantor-Bendixson height at least $\alpha$, such that every equivalent norm on $C(K)$ can be approximated as above.
\end{abstract}

\author{Richard J.~Smith}
\address{School of Mathematics and Statistics, University College Dublin, Belfield, Dublin 4, Ireland}
\email{richard.smith@maths.ucd.ie}
\urladdr{http://mathsci.ucd.ie/~rsmith}

\author{Stanimir Troyanski}
\address{Institute of Mathematics and Informatics, Bulgarian Academy of Science, bl.8, acad. G. Bonchev str. 1113 Sofia, Bulgaria, and Departamento de Matem\'aticas, Universidad de Murcia, Campus de Espinardo. 30100 Murcia, Spain}
\email{stroya@um.es}

\thanks{The first author thanks the Institute of Mathematics and Informatics, Bulgarian Academy of Sciences, for its hospitality, when he visited in April 2017. Both authors were supported financially by Science Foundation Ireland under Grant Number `SFI 11/RFP.1/MTH/3112'. The second author was partially supported by MTM2014-54182-P (MINECO/FEDER), MTM2017-86182-P (AEI/FEDER, UE) and Bulgarian National Scientific Fund under Grant DFNI/Russia,01/9/23.06.2017. Both authors are grateful to D.~Leung and G.~Lancien for very helpful remarks.}

\subjclass[2010]{46B03, 46B20, 46B26}
\keywords{Polyhedrality, smoothness, approximation, renorming}
\date{\today}
\maketitle

\section{Introduction}

Let $(X,\ndot)$ be a Banach space and let \textbf{P} denote some geometric property of norms, such as strict convexity, polyhedrality or $C^k$-smoothness. We shall say that $\ndot$ can be \emph{approximated} by norms having \textbf{P} if, given $\ep>0$, there exists a norm $\tndot$ on $X$ having \textbf{P}, such that
\[
\n{x} \;\leqslant\; \tn{x} \;\leqslant\; (1+\ep)\n{x},
\]
for all $x \in X$. 

The question of whether all equivalent norms on a given Banach space can be approximated by norms having \textbf{P} has been the subject of a number of papers. A norm on $X$ is called \emph{polyhedral} if, given a finite-dimensional subspace $E \subseteq X$, the restriction of the unit ball to $E$ is a polytope, that is, it has only finitely many extreme points. Given $k \in \N$, a norm is called \emph{$C^k$-smooth} if its $k$th Fr\'echet derivative exists and is continuous at every non-zero point. The norm said to be \emph{$C^{\infty}$-smooth} if this holds for all $k \in \N$. It has been shown that if $X$ is separable and admits a single equivalent polyhedral or $C^k$-smooth norm, where $k \in \N\cup\{\infty\}$, then every equivalent norm on $X$ can be approximated by polyhedral norms or $C^k$-smooth norms, respectively \cite{dfh:98,ht:14}. Other approximation results can be found in \cite{dfh:96,fhz:97,fzz:81,hp:14,pwz:81}.

In the context of approximation by polyhedral and $C^k$-smooth norms, very little was known about the non-separable case until relatively recently. The natural norm of the Banach space $c_0(\Gamma)$, where $\Gamma$ is an arbitrary set, is easily seen to be polyhedral. Moreover, the fact that it admits an equivalent $C^\infty$-smooth norm, at least in the separable case, has been known for some time:~an example of Kuiper was given in \cite{bf:66}. Some results on the approximation of norms on $c_0(\Gamma)$ by $C^\infty$-smooth norms in restricted cases can be found in \cite{fhz:97,pwz:81}. The next result answers the question in the case of $c_0(\Gamma)$ in full generality.

\begin{thm}[{\cite[Theorem 1.7]{bs:16}}]\label{c0_approx} Let $\Gamma$ be an arbitrary set. Then every equivalent norm on $c_0(\Gamma)$ can be approximated by both polyhedral norms and $C^\infty$-smooth norms.
\end{thm}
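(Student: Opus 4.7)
My strategy exploits two special features of $c_0(\Gamma)$: its dual $c_0(\Gamma)^{\ast} = \ell_1(\Gamma)$ consists of functionals with countable support, and every finite-dimensional subspace of $c_0(\Gamma)$ sits inside $c_0(\Gamma')$ for some countable $\Gamma' \subseteq \Gamma$. Given an equivalent norm $\tndot$ and $\ep>0$, the plan is to approximate $\tndot$ by a norm whose shape is dictated by a carefully chosen family of finitely supported dual functionals. First I would fix a $1$-norming set $F \subseteq B_{(X,\tndot)^{\ast}}$ and, via a transfinite construction that adds coordinates and functionals in countable batches, prune each $f \in F$ to a finitely supported $\widetilde f$ so that $|f(x) - \widetilde f(x)|$ is bounded by a fixed small multiple of $\tn{x}$, uniformly in $x$. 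The countable support of individual members of $\ell_1(\Gamma)$ is what lets each transfinite stage close up, so that the resulting family $\widetilde F$ still satisfies $\sup_{\widetilde f \in \widetilde F}|\widetilde f(x)| \geqslant (1+\ep)^{-1}\tn{x}$ for every $x \in c_0(\Gamma)$.

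For the polyhedral approximation I would set
\[
\tn{x}_{\mathrm p} \;=\; \sup_{\widetilde f \in \widetilde F}|\widetilde f(x)| + \delta\tn{x}
\]
for a small $\delta>0$, with a subsequent rescaling to ensure two-sided $(1+\ep)$-equivalence. On any finite-dimensional $E \subseteq c_0(\Gamma)$ — necessarily contained in a finitely supported coordinate subspace — only finitely many $\widetilde f$ can take values on $E$ comparable to the norm, while the rest are strictly dominated by $\delta\tn{\cdot}$; the Fonf-type criterion then forces the restricted unit ball to be a polytope. For the $C^{\infty}$-smooth approximation I would aggregate the pruned functionals via
\[
\tn{x}_{\mathrm s}^{2} \;=\; \delta\tn{x}^{2} + \sum_{n=1}^{\infty} \lambda_n \left(\sum_{f \in F_n} \widetilde f(x)^{2k_n}\right)^{1/k_n},
\]
where the $F_n \subseteq \widetilde F$ are finite, $k_n \to \infty$ rapidly, and $\lambda_n$ is summable. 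Each inner polynomial is everywhere $C^{\infty}$, the outer power $y \mapsto y^{1/k_n}$ is smooth on $(0,\infty)$, and local finiteness on $c_0(\Gamma)$ means that near any non-zero point the series behaves like a sum of $C^{\infty}$-functions of finitely many coordinates.

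The principal obstacle is the pruning step. Pointwise truncation is routine; the difficulty is extracting a family $\widetilde F$ whose truncations approximate $F$ \emph{uniformly} in $x$, so that $\tndot_{\mathrm p}$ and $\tndot_{\mathrm s}$ are globally $(1+\ep)$-equivalent to $\tndot$. This is where the transfinite structure must be organised with care: each countable stage has to close up errors introduced by earlier stages, and for the smooth variant one additionally needs the pruned functionals to \emph{locally stabilise} — that is, to have a common finite coordinate support in a neighbourhood of each point — so that the infinite sum above may be differentiated term by term. Reconciling this local stability with the near-$1$-norming requirement on $\widetilde F$ is the delicate technical heart of the proof.
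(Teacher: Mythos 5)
The paper does not prove this theorem itself --- it is quoted from \cite[Theorem 1.7]{bs:16} --- but the machinery it does develop (Theorem \ref{thm_tb} and the $\sigma$-$w^*$-LRC framework) makes clear what a proof must deliver, and your proposal falls short of it in two concrete ways. First, both of your candidate norms contain an additive term built from the \emph{arbitrary} equivalent norm $\tndot$: the summand $\delta\tn{x}$ in $\tndot_{\mathrm p}$ and the summand $\delta\tn{x}^2$ in $\tndot_{\mathrm s}$. Since $\tndot$ is an arbitrary equivalent norm, it need be neither polyhedral nor smooth, and adding it destroys exactly the properties you are trying to produce (the sum of a polyhedral norm and a generic norm is not polyhedral; the square of a generic norm is not even once differentiable). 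These terms cannot simply be deleted either, because they are doing the work of keeping your approximation two-sidedly equivalent to $\tndot$.

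Second, and more fundamentally, the claim that on a finite-dimensional coordinate subspace $E$ ``only finitely many $\widetilde f$ can take values on $E$ comparable to the norm'' has no justification and is false in general. Truncating each member of a $1$-norming set $F$ to finite support leaves a family $\widetilde F$ of the same (possibly uncountable) cardinality, and nothing prevents uncountably many of the restrictions $\widetilde f\restrict{E}$ from being distinct and from nearly attaining the supremum at various points of $E$; finite support of each individual functional gives no local finiteness of the \emph{family}. The same missing ingredient sinks the smooth construction: the ``local stabilisation'' you ask for (a common finite coordinate support near each point) is precisely the hard part, and pruning supplies no mechanism for it. What is actually needed is a boundary that is $\sigma$-$w^*$-LRC and $w^*$-$K_\sigma$ in the sense of Definition \ref{defn_lrc}, so that Theorem \ref{thm_tb} applies; producing such a boundary for an arbitrary equivalent norm on $c_0(\Gamma)$ is the substance of the cited proof and requires a genuinely different (topological) argument --- roughly, a transfinite decomposition of the dual sphere into countably many pieces on each of which the relevant functionals are $w^*$-locally relatively norm-compact --- not merely a truncation of supports. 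You have correctly identified where the difficulty lies, but the proposal as written does not contain the idea that resolves it.
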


The proof of this theorem makes use of a number of geometric and topological techniques. It is worth pointing out that this result extends to all subspaces of $c_0(\Gamma)$, because any equivalent norm on such a subspace $X$ can be extended to an equivalent norm on the whole space \cite[Lemma II.8.1]{dgz:93}, and by inspection of the definitions it is clear that the restriction to $X$ of any approximation of the extended norm will inherit the desired properties.

The purpose of this paper is to find new examples of spaces with the property that every equivalent norm on the space can be approximated by both polyhedral norms and $C^\infty$-smooth norms (given the above remark, such examples cannot be isomorphic to subspaces of $c_0(\Gamma)$). In Section \ref{sect_approx_frame} we present the general framework and preparatory lemmas that will be used in Section \ref{sect_main_tools}, in some new approximation theorems are presented. The subsequent sections are devoted to applying these theorems to find new examples of spaces whose norms can be approximated in the manner described above. 

\section{A framework for approximation}\label{sect_approx_frame}

In this section we build the technical tools required in Section \ref{sect_main_tools}. Let $\Gamma$ be an infinite set and let $X$ be a Banach space supporting a system of non-zero projections $(P_\gamma)_{\gamma \in \Gamma}$, satisfying four properties:
\begin{enumerate}
\item $P_\alpha P_\beta = 0$ whenever $\alpha \neq \beta$,
\item $\sup (\n{P_\gamma})_{\gamma \in \Gamma} < \infty$,
\item $X = \cl{\lspan}^{\ndot}(P_\gamma X)_{\gamma \in \Gamma}$, and
\item $X^* = \cl{\lspan}^{\ndot}(P^*_\gamma X^*)_{\gamma \in \Gamma}$.
\end{enumerate}
This concept generalizes the well-known idea of a (shrinking, bounded) \emph{Markushevich basis} (M-basis). If the space $X$ admits a shrinking bounded Markushevich basis (M-basis)  $(e_\gamma,e^*_\gamma)_{\gamma \in \Gamma}$, then defining $P_\gamma x = e^*_\gamma(x)e_\gamma$ yields such a system, where $\dim P_\gamma X = 1$ for all $\gamma \in \Gamma$. Conversely, if each $P_\gamma$ has 1-dimensional range then we obtain such a basis. In view of (4), it makes sense to call any such system of projections shrinking.

To simplify notation, given $x \in X$ and $f \in X^*$, we will denote $P_\gamma x$ and $P^*_\gamma f$ by $x_\gamma$ and $f_\gamma$, respectively. We define the \emph{support} and the \emph{range} of $f \in X^*$ (with respect to the system of projections) to be the sets
\[
\supp(f) \;=\; \set{\gamma \in \Gamma}{f_\gamma \neq 0} \qquad\text{and}\qquad \ran(f) \;=\; \set{\n{f_\gamma}}{\gamma \in \Gamma},
\]
respectively. Likewise, we define the support of $x \in X$ to be
\[
\supp(x) \;=\; \set{\gamma \in \Gamma}{x_\gamma \neq 0}.
\]

\begin{lem}\label{lem_ran(f)_structure}
Given $f \in X^*$ and $\ep>0$, the set $\set{\gamma \in \Gamma}{\n{f_\gamma}\geqslant \ep}$ is finite.
\end{lem}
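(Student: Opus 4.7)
The plan is to approximate $f$ in $X^*$ by a finite linear combination of elements drawn from the ranges $P^*_\gamma X^*$ (available via property (4)), observe that any such finite combination has ``finite support'' with respect to the projections $P^*_\gamma$ (a direct consequence of (1)), and then use the uniform bound (2) to transfer smallness from the approximation error to $\|f_\gamma\|$ for $\gamma$ outside that finite index set.

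In more detail, I would set $M := \sup_\gamma \n{P_\gamma} < \infty$ by (2), note that $\n{P^*_\gamma} = \n{P_\gamma} \leqslant M$, and invoke (4) to choose indices $\alpha_1,\ldots,\alpha_k \in \Gamma$ and functionals $g_i \in P^*_{\alpha_i} X^*$ such that $g := \sum_{i=1}^{k} g_i$ satisfies $\n{f - g} < \ep/(2M)$.

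The key observation is that (1) forces $P^*_\gamma g = 0$ whenever $\gamma \notin \{\alpha_1,\ldots,\alpha_k\}$: writing $g_i = P^*_{\alpha_i} h_i$ and using $P^*_\gamma P^*_{\alpha_i} = (P_{\alpha_i} P_\gamma)^* = 0$ for $\gamma \neq \alpha_i$, every summand vanishes. Consequently, for any such $\gamma$,
\[
\n{f_\gamma} \;=\; \n{P^*_\gamma(f-g)} \;\leqslant\; M\n{f-g} \;<\; \tfrac{\ep}{2} \;<\; \ep,
\]
so $\set{\gamma \in \Gamma}{\n{f_\gamma} \geqslant \ep} \subseteq \{\alpha_1,\ldots,\alpha_k\}$ is finite, as desired.

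I do not anticipate a genuine obstacle here: the lemma essentially rests on recognizing that property (4) (the density of $\lspan(P^*_\gamma X^*)_{\gamma \in \Gamma}$ in $X^*$) together with the orthogonality relation (1) automatically yields approximation by finitely supported functionals, with the error controlled uniformly by (2). The only minor point to keep in mind is that $\n{P^*_\gamma}$ is uniformly bounded; this is immediate from (2) and the isometry $\n{P^*_\gamma} = \n{P_\gamma}$.
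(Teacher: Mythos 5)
Your proof is correct and follows essentially the same route as the paper: approximate $f$ by a finitely supported $g \in \lspan(P^*_\gamma X^*)_{\gamma \in F}$ using property (4), note that (1) kills $P^*_\gamma g$ off $F$, and bound $\n{f_\gamma} = \n{P^*_\gamma(f-g)}$ via the uniform bound from (2). The only difference is cosmetic (your $\ep/(2M)$ versus the paper's $\ep/D$).
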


\begin{proof} According to properties (1), (2) and (4) above, given $f \in X^*$ and $\ep>0$, there exists a finite set $F \subseteq \Gamma$ and $g \in \lspan(P_\gamma^* X^*)_{\gamma \in F} $, such that $\n{f-g} < \ep/D$, where $D:=\sup (\n{f_\gamma})_{\gamma \in \Gamma}$. It follows that $\n{f_\alpha} < \ep$ whenever $\alpha \in \Gamma\setminus F$.
\end{proof}

We proceed to define a series of numerical quantities, sets and `approximating functionals' associated with subsets of $\Gamma$ and elements of $X^*$. First, given finite $F \subseteq \Gamma$, define
\begin{equation}\label{defn_rho}
\rho(F) \;=\; \sup\set{\n{\sum_{\gamma \in F}f_\gamma}}{f \in X^*\text{ and }\n{f_\gamma}\leqslant 1 \text{ whenever }\gamma \in F}.
\end{equation}
Evidently, $\rho$ is increasing, in the sense that $\rho(F)\leqslant \rho(G)$ whenever $F \subseteq G \subseteq \Gamma$ and $G$ is finite.

\begin{lem}\label{lem_lim_inf} Let $(F^\alpha)$ be a net of finite subsets of $\Gamma$, such that
\[
A \;:=\; \lim \inf F_\alpha \;=\; \bigcup_\alpha \bigcap_{\beta \geqslant \alpha} F_\beta,
\]
is infinite. Then $\rho(F^\alpha) \to \infty$.
\end{lem}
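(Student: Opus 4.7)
The plan is first to reduce the lemma to the following self-contained claim: \emph{whenever $A\subseteq\Gamma$ is infinite, one has $\sup\{\rho(B):B\subseteq A\text{ finite}\}=\infty$.} This suffices, because given any $M>0$ one may pick a finite $B\subseteq A$ with $\rho(B)>M$; since $A=\liminf F^\alpha$ and $B$ is finite, there is $\alpha_0$ with $B\subseteq F^\beta$ for every $\beta\geqslant\alpha_0$, and monotonicity of $\rho$ then forces $\rho(F^\beta)\geqslant\rho(B)>M$ for all such $\beta$.

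To prove the reduced claim I would argue by contradiction. Assume $\rho(B)\leqslant M$ for every finite $B\subseteq A$, choose distinct $\gamma_1,\gamma_2,\ldots\in A$, and for each $n$ pick a unit vector $g_n\in P_{\gamma_n}^*X^*$. Because $P_{\gamma_n}$ is idempotent and $P_\alpha P_\beta=0$ for $\alpha\neq\beta$, one checks that $P_{\gamma_m}^* g_n=\delta_{mn}g_n$. So for any $N$ and any signs $\epsilon_n=\pm 1$, the functional $f=\sum_{n=1}^N\epsilon_n g_n$ has $\|f_{\gamma_k}\|=1$ for $k\leqslant N$ and $f_\gamma=0$ otherwise, whence $f$ is admissible in the definition of $\rho(\{\gamma_1,\ldots,\gamma_N\})$ and the assumption gives $\bigl\|\sum_{n=1}^N\epsilon_n g_n\bigr\|\leqslant M$. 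Choosing $\epsilon_n=\sgn g_n(x)$ for each $x\in X$ yields $\sum_n |g_n(x)|\leqslant M\|x\|$, so $Tx:=(g_n(x))_{n\geqslant 1}$ defines a bounded operator $T\colon X\to\lp{1}$.

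Next, for each $n$ fix $x_n\in X$ with $\|x_n\|\leqslant 1$ and $g_n(x_n)\geqslant 1/2$, and set $y_n=P_{\gamma_n}x_n$. The same orthogonality gives $Ty_n=g_n(x_n)e_n$, while $1/2\leqslant\|y_n\|\leqslant C$ with $C=\sup_\gamma\|P_\gamma\|$. The key observation is that $(y_n)$ is weakly null in $X$: for any $f\in X^*$,
\[
|f(y_n)| \;=\; |(P_{\gamma_n}^* f)(x_n)| \;\leqslant\; \|f_{\gamma_n}\|,
\]
and Lemma \ref{lem_ran(f)_structure} ensures $\|f_{\gamma_n}\|\to 0$. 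Hence $y_n/\|y_n\|$ is also weakly null in $X$; yet $T(y_n/\|y_n\|)=(g_n(x_n)/\|y_n\|)\,e_n$ is a scalar multiple of $e_n$ of $\lp{1}$-norm at least $1/(2C)>0$, so by the Schur property of $\lp{1}$ it cannot be weakly null in $\lp{1}$. This contradicts the weak-to-weak continuity of $T$, finishing the proof.

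The main obstacle is the weak-null assertion for $(y_n)$, and this is precisely where the shrinking hypothesis (4) is used, via Lemma \ref{lem_ran(f)_structure}. Without (4) the conclusion truly fails: on $X=\lp{1}(\Gamma)$ with the obvious coordinate projections, properties (1)--(3) hold but $\rho\equiv 1$.
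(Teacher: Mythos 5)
Your proof is correct, but it follows a genuinely different route from the paper's. The paper argues directly on the net: assuming $\rho(F^\alpha)\leqslant L$ along a subnet, it forms the test functionals $f^\alpha=\sum_{\gamma\in F^\alpha}g_\gamma$, extracts a $w^*$-accumulation point $f$ by Banach--Alaoglu, uses the $w^*$-$w^*$-continuity of the $P_\gamma^*$ to see that $\n{f_\gamma}=1$ for every $\gamma\in A$, and contradicts Lemma \ref{lem_ran(f)_structure}. You instead first reduce to the clean combinatorial claim that $\sup\set{\rho(B)}{B\subseteq A\text{ finite}}=\infty$ for any infinite $A$ --- a reduction that is valid (directedness of the index set handles the finitely many thresholds) and that dispenses with the subnet entirely --- and then prove that claim by a sign-choice argument producing a bounded operator $T\colon X\to\lp{1}$, contradicting the Schur property via the weakly null normalized sequence $(y_n/\n{y_n})$. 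All the individual steps check out: the orthogonality relations $P_{\gamma_m}^*g_n=\delta_{mn}g_n$ follow from property (1) and idempotence, the admissibility of $\sum\epsilon_n g_n$ in the definition of $\rho$ is exactly as in (\ref{defn_rho}) since that sum equals its own restriction to $\{\gamma_1,\dots,\gamma_N\}$, and the weak nullity of $(y_n)$ is precisely where Lemma \ref{lem_ran(f)_structure} (hence hypothesis (4)) enters --- the same pressure point as in the paper. What each approach buys: the paper's is shorter and uses only $w^*$-compactness; yours trades that for a self-contained finite-dimensional estimate plus the Schur property (which is heavier than strictly necessary --- once you have $\n{Ty_n}_1\geqslant 1/2$ with $Ty_n$ supported on the single coordinate $n$, testing against the constant-one element of $\ell_\infty$ already contradicts weak nullity, so Schur can be avoided). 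Your closing observation that $\lp{1}(\Gamma)$ with coordinate projections satisfies (1)--(3) but has $\rho\equiv 1$ is a worthwhile bonus not in the paper, as it shows the shrinking hypothesis is essential to the lemma.
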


\begin{proof} Suppose that $\rho(F^\alpha) \not\to \infty$. By taking a subnet if necessary, we can assume that there exists some $L>0$ such that $\rho(F^\alpha) \leqslant L$ for all $\alpha$ (note that $A$ remains infinite). Since the $P_\gamma$ are non-zero, for each $\gamma \in \bigcup_\alpha F^\alpha$, we can select $g_\gamma \in P_\gamma^* X^*$ such that $\n{g_\gamma}=1$. Given $\alpha$, define $f^\alpha = \sum_{\gamma \in F^\alpha} g_\gamma$. By property (1) of our system of projections, we see that $f^\alpha_\gamma = P_\gamma^* f^\alpha= g_\gamma$ if $\gamma \in F^\alpha$, and $f^\alpha_\gamma=0$ otherwise. We have $\n{f^\alpha} \leqslant \rho(F^\alpha) \leqslant L$ for all $n$, and thus the net $(f^\alpha)$ admits a $w^*$-accumulation point $f$. Given the definition of $A$ and the $w^*$-$w^*$-continuity of each $P_\gamma^*$, we conclude that $f_\gamma=g_\gamma$ for all $\gamma \in A$. However, the fact that $\n{f_\gamma}=1$ for infinitely many $\gamma \in \Gamma$ violates Lemma \ref{lem_ran(f)_structure}.
\end{proof}

Next, we define a series of numerical quantities, subsets and approximating functionals associated with elements of $X^*$. Let $f \in X^*$ and $k \in \N$. It follows from Lemma \ref{lem_ran(f)_structure} that if $\ran(f)$ is infinite then it has a single accumulation point at $0$. Therefore, if $|\ran(f)| \geqslant k$, it is legitimate to define $p_k(f)$ to be the $k$th largest value of $\ran(f)$. If $|\ran(f)| < k$, set $p_k(f)=0$. It is clear that $p_k(f) \to 0$ as $k \to \infty$. 

We continue by defining
\[
q_k(f) \;=\; p_k(f)-p_{k+1}(f), \qquad G_k(f) \;=\; \set{\gamma \in \supp(f)}{\n{f_\gamma} \geqslant p_k(f)},
\]
and
\[
H_k(f) \;=\; \set{\gamma \in \supp(f)}{\n{f_\gamma} = p_k(f)} \;=\; G_k(f)\setminus G_{k-1}(f),
\]
where we set $G_0(f)=\varnothing$ for convenience. Evidently, $q_k(f)>0$ if and only if $p_k(f)>0$. Also, $G_k(f)$ is always finite. If $p_k(f)>0$ then $G_k(f)$ is finite by Lemma \ref{lem_ran(f)_structure}. If $p_k(f)=0$ then $G_k(f)=\supp(f)$, which must be finite in this case. It is obvious that the $G_k(f)$ form an increasing sequence of subsets of $\Gamma$.

Now define a function $\map{\theta}{X^*}{[0,\infty]}$ by 
\[
\theta(f) \;=\; \sum_{k=1}^\infty q_k(f)\rho(G_k(f)).
\]
Given $t \neq 0$, it is clear that $q_k(t f)=|t|q_k(f)$ and $G_k(t f)=G_k(f)$, and thus $\theta$ is absolutely homogeneous.

\begin{lem}\label{lem_w^*-lsc} The function $\theta$ is $w^*$-lower semicontinuous.
\end{lem}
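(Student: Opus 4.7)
The plan is to express $\theta$ as a supremum of $w^*$-lower semicontinuous functions, each depending on only finitely many of the quantities $\n{f_\gamma}$. For each finite subset $F\subseteq\Gamma$, I would set
\[
\theta_F(f) \;=\; \lint{0}{\infty}{\rho\bigl(\set{\gamma \in F}{\n{f_\gamma}>t}\bigr)}{t},
\]
which is a finite sum, the integrand being a step function with only finitely many distinct values. The first step is to rewrite the series defining $\theta$: using that $\set{\gamma \in \Gamma}{\n{f_\gamma}>t}$ coincides with $G_k(f)$ for $t\in[p_{k+1}(f),p_k(f))$ (finite by Lemma \ref{lem_ran(f)_structure} when $t>0$), one obtains
\[
\theta(f)\;=\;\lint{0}{\infty}{\rho\bigl(\set{\gamma \in \Gamma}{\n{f_\gamma}>t}\bigr)}{t}.
\]
Monotonicity of $\rho$ gives $\theta_F(f)\leqslant\theta(f)$, while direct calculation with $F=G_n(f)$ shows $\theta_{G_n(f)}(f) = \sum_{k=1}^{n-1}q_k(f)\rho(G_k(f))+p_n(f)\rho(G_n(f))\geqslant \sum_{k=1}^{n}q_k(f)\rho(G_k(f))$, which tends to $\theta(f)$ as $n\to\infty$. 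Hence $\theta(f)=\sup\{\theta_F(f) : F\subseteq\Gamma\text{ finite}\}$.

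The heart of the argument is to verify that each $\theta_F$ is $w^*$-lsc. Since $\theta_F(f)$ depends on $f$ only through $(\n{f_\gamma})_{\gamma \in F}\in[0,\infty)^F$, there is a function $\phi_F\colon[0,\infty)^F\to[0,\infty)$ with $\theta_F(f)=\phi_F((\n{f_\gamma})_{\gamma \in F})$. The integral representation makes two properties of $\phi_F$ transparent: it is monotonically non-decreasing in each coordinate (enlarging $a_\gamma$ only enlarges each superlevel set, and $\rho$ is increasing), and it is continuous on $[0,\infty)^F$ by dominated convergence (the integrand is bounded by $\rho(F)$ and supported in an interval $[0,\max_\gamma a_\gamma]$ that stays bounded along a convergent sequence). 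Moreover, each coordinate $f\mapsto\n{f_\gamma}$ is itself $w^*$-lsc, as $\n{\cdot}$ is $w^*$-lsc and $P_\gamma^*$ is $w^*$-$w^*$-continuous.

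The conclusion is then a standard subnet compactness argument, which is where the main delicacy lies, the $w^*$-topology not being metrizable in general. Given a $w^*$-convergent net $f^\alpha\to f$ with $L:=\liminf_\alpha\theta_F(f^\alpha)<\infty$, pass to a subnet along which $\theta_F(f^\alpha)\to L$ and each $\n{f^\alpha_\gamma}$ converges in $[0,\infty]$ to some $b_\gamma\geqslant\n{f_\gamma}$. If any $b_\gamma=\infty$ then $\theta_F(f^\alpha)\geqslant\n{f^\alpha_\gamma}\rho(\{\gamma\})\to\infty$ (with $\rho(\{\gamma\})>0$, since $P_\gamma\neq0$), contradicting finiteness of $L$; so all $b_\gamma$ are finite, and continuity together with monotonicity of $\phi_F$ yields $L=\phi_F(b)\geqslant\phi_F((\n{f_\gamma})_{\gamma \in F})=\theta_F(f)$. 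Thus each $\theta_F$ is $w^*$-lsc, and hence so is $\theta$ as a supremum of such functions.
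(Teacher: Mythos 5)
Your proof is correct, but it takes a genuinely different route from the paper's. The paper argues directly from the definition of lower semicontinuity: given $\theta(f)>\lambda$, it fixes the minimal $n$ with $\sum_{k=1}^n q_k(f)\rho(G_k(f))>\lambda$, builds the $w^*$-open set $U=\set{g}{\n{g_\gamma}>\n{f_\gamma}-\ep\text{ for }\gamma\in G_n(f)}$, and then runs a fairly delicate chain of estimates (matching indices $j_k$ with $p_{j_k}(g)=\beta_k$ and performing an Abel summation) to show $\theta>\lambda$ on $U$. You instead exhibit $\theta$ as a ``layer-cake'' integral $\lint{0}{\infty}{\rho(\set{\gamma}{\n{f_\gamma}>t})}{t}$, truncate to finite $F\subseteq\Gamma$, and present $\theta$ as the supremum of the functions $\theta_F$, each of which is a continuous, coordinatewise non-decreasing function of the finitely many $w^*$-lsc coordinates $f\mapsto\n{f_\gamma}$. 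Both arguments ultimately rest on the same two ingredients --- the $w^*$-lower semicontinuity of $g\mapsto\n{g_\gamma}$ and the monotonicity of $\rho$, together with the finiteness of the sets $G_n(f)$ --- but your integral formulation absorbs the paper's Abel-summation bookkeeping and handles ties among the values $\n{f_\gamma}$ automatically, at the price of an extra layer of machinery (subnet compactness in $[0,\infty]^F$ and dominated convergence, where you should note in passing that the superlevel sets converge for all but the finitely many exceptional values $t=\n{f_\gamma}$, which is a null set). A side benefit of your approach is the identity $\theta(f)=\lint{0}{\infty}{\rho(\set{\gamma}{\n{f_\gamma}>t})}{t}$ itself, which is reusable; the paper's proof is longer but entirely self-contained and elementary. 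I see no genuine gap in your argument.
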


\begin{proof} Let $f \in X^*$, $\lambda \geqslant 0$ and assume $\theta(f) > \lambda$.  Fix minimal $n\in\N$ such that
\[
\alpha:=\sum_{k=1}^n q_k(f)\rho(G_k(f)) > \lambda.
\]
By the minimality of $n$, $p_n(f) \geqslant q_n(f)>0$. Set
\[
\ep \;=\; \min\left\{\frac{\alpha-\lambda}{\rho(G_n(f))},\, p_n(f)\right\}>0.
\]
By the $w^*$-$w^*$-continuity of each $P_\gamma^*$ and the natural lower semicontinuity of dual norms, the functions $g \mapsto \n{g_\gamma}$, $\gamma \in \Gamma$, are $w^*$-lower semicontinuous. Therefore, as $G_n(f)$ is finite, the set
\[
U:=\set{g \in X^*}{\n{g_\gamma} \;>\; \n{f_\gamma} - \ep \text{ whenever }\gamma \in G_n(f)},
\]
is $w^*$-open (and clearly contains $f$). 

Let $g \in U$. Given $1 \leqslant k \leqslant n$, let $\beta_k = \min\set{\n{g_\gamma}}{\gamma \in G_k(f)}$. From the definition of $\ep$ and $U$, we know that
\[
\beta_k \;>\; p_k(f) - \ep \;\geqslant\; 0,
\]
for all such $k$. Clearly the $\beta_k$ are non-increasing. Now find $j_k \in \N$, $1 \leqslant k \leqslant n$, such that $p_{j_k}(g) = \beta_k$. Observe that $G_k(f)\subseteq G_{j_k}(g)$ whenever $1 \leqslant k \leqslant n$. Since the $\beta_k$ are non-increasing, the integers $j_k$ must be non-decreasing. Because $\rho$ is an increasing function, as observed above, it follows that
\begin{align*}
\theta(g) &\;>\; \sum_{k=1}^{n-1}\sum_{i=j_k}^{j_{k+1}-1} q_i(g)\rho(G_i(g)) + \sum_{i=j_n}^\infty q_i(g)\rho(G_i(g)) \\
&\geqslant\; \sum_{k=1}^{n-1} \rho(G_{j_k}(g))\sum_{i=j_k}^{j_{k+1}-1}(p_i(g)-p_{i+1}(g)) + \rho(G_{j_n}(g))\sum_{i=j_n}^\infty (p_i(g)-p_{i+1}(g))\\
&=\;  \sum_{k=1}^{n-1} \rho(G_{j_k}(g))(\beta_k-\beta_{k+1}) + \rho(G_{j_n}(g))\beta_n\\
&\geqslant\;  \sum_{k=1}^{n-1} \rho(G_k(f))(\beta_k-\beta_{k+1}) + \rho(G_n(f))\beta_n\\
&=\; \beta_1 \rho(G_1(f)) + \sum_{k=2}^n \beta_k(\rho(G_k(f))-\rho(G_{k-1}(f)))\\
&>\; (p_1(f)-\ep)\rho(G_1(f)) + \sum_{k=2}^n (p_k(f)-\ep)(\rho(G_k(f))-\rho(G_{k-1}(f)))\\
&=\; \sum_{k=1}^{n-1} (p_k(f)-p_{k+1}(f))\rho(G_k(f)) + p_n(f)\rho(G_n(f)) - \ep\rho(G_n(f))\\
&\geqslant\; \sum_{k=1}^n q_k(f)\rho(G_k(f)) + \lambda - \alpha \;=\; \lambda. \qedhere
\end{align*}
\end{proof}

Given $f \in X^*$ and $m,n \in \N$, $m<n$, we define associated functionals $h_m(f)$, $g_{m,n}(f)$ and $j_{m,n}(f)$. The $j_{m,n}(f)$ will be the functionals that we use to approximate $f$ in Section \ref{sect_main_tools}. First, given $k \in \N$, define the auxiliary functionals
\[
\omega_k(f) \;=\; \sum_{\gamma \in G_k(f)}  \frac{f_\gamma}{\n{f_\gamma}}.
\]
By the definition of $\rho$, $\n{\omega_k(f)}\leqslant\rho(G_k(f))$ for all $k \in \N$. Now define
\[
h_m(f) \;=\; \sum_{k=1}^m q_k(f)\omega_k(f).
\]
The following straightforward observation will be used in the next lemma:
\begin{equation}\label{eqn_h_less_than_theta}
\n{h_m(f)} \;\leqslant\; \sum_{k=1}^m q_k(f)\n{\omega_k(f)} \;\leqslant\; \sum_{k=1}^m q_k(f)\rho(G_k(f)) \;\leqslant\; \theta(f),
\end{equation}
for all $m$.

\begin{lem}\label{lem_theta_dominate} We have  $\n{f} \leqslant \theta(f)$ for all $f \in X^*$, and $\n{f - h_m(f)} \to 0$ whenever $\theta(f) < \infty$.
\end{lem}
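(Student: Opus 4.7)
The plan is to treat $\n{f} \leqslant \theta(f)$ as trivial when $\theta(f) = \infty$, and when $\theta(f) < \infty$ to obtain both assertions from the fact that $(h_m(f))$ is Cauchy in $X^*$ with limit $f$; taking the norm-limit in \eqref{eqn_h_less_than_theta} then yields $\n{f} \leqslant \theta(f)$.

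The Cauchy property is immediate from $h_{m+1}(f) - h_m(f) = q_{m+1}(f)\omega_{m+1}(f)$ together with $\n{\omega_k(f)} \leqslant \rho(G_k(f))$, which give
\[
\sum_{m=1}^\infty \n{h_{m+1}(f) - h_m(f)} \;\leqslant\; \sum_{m=1}^\infty q_{m+1}(f)\rho(G_{m+1}(f)) \;\leqslant\; \theta(f) \;<\; \infty.
\]
Let $\tilde f \in X^*$ denote the resulting limit. The decisive step is an algebraic rearrangement: swapping the order of summation in the definition of $h_m(f)$, each $\gamma$ with $\n{f_\gamma} = p_{k(\gamma)}(f)$ appears in $G_k(f)$ precisely for $k \geqslant k(\gamma)$, and when $\gamma \in G_m(f)$ the telescoping identity $\sum_{k=k(\gamma)}^m q_k(f) = \n{f_\gamma} - p_{m+1}(f)$ yields
\[
h_m(f) \;=\; \sum_{\gamma \in G_m(f)} f_\gamma \;-\; p_{m+1}(f)\omega_m(f).
\]
Applying $P_\alpha^*$ and using property (1), the $\alpha$-component $(h_m(f))_\alpha$ equals $f_\alpha(1 - p_{m+1}(f)/\n{f_\alpha})$ when $\alpha \in G_m(f)$, and $0$ otherwise. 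Since $p_{m+1}(f) \to 0$ by Lemma \ref{lem_ran(f)_structure} and each $\alpha \in \supp(f)$ eventually lies in $G_m(f)$, norm continuity of $P_\alpha^*$ forces $\tilde f_\alpha = f_\alpha$ for every $\alpha \in \Gamma$.

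To conclude $\tilde f = f$, I would record the elementary \emph{vanishing support} principle: any $g \in X^*$ with $g_\gamma = 0$ for all $\gamma \in \Gamma$ is zero. Indeed, for $x = P_\alpha y \in P_\alpha X$ we have $g(x) = (P_\alpha^* g)(y) = g_\alpha(y) = 0$, so $g$ vanishes on $\lspan(P_\gamma X)_{\gamma \in \Gamma}$, which is dense in $X$ by property (3). Applied to $\tilde f - f$, this gives $\tilde f = f$, hence $\n{f - h_m(f)} \to 0$.

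The main technical obstacle is the rearrangement producing the identity $h_m(f) = \sum_{\gamma \in G_m(f)} f_\gamma - p_{m+1}(f)\omega_m(f)$, which requires careful bookkeeping of how the sets $G_k(f)$ nest as $k$ varies. Once this identity is in hand, the componentwise convergence and the vanishing support principle render the remainder of the argument essentially routine.
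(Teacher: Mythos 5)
Your proposal is correct, but it reaches the two conclusions in the opposite logical order from the paper, and the mechanism for the harder half is genuinely different. The paper first proves $\n{f}\leqslant\theta(f)$ directly, for arbitrary $f$, by testing against finitely supported vectors $x$ with $\n{x}=1$ and $f(x)\geqslant\n{f}-\ep$ and using the identity $h_m(f)=\sum_{\gamma\in G_m(f)}f_\gamma-p_{m+1}(f)\omega_m(f)$ to write $f(x)=h_m(f)(x)+p_{m+1}(f)\omega_M(f)(x)\leqslant\theta(f)+o(1)$; it then deduces $\n{f-h_m(f)}\to 0$ by the shift observation $p_k(f-h_m(f))=p_{m+k}(f)$, $G_k(f-h_m(f))=G_{m+k}(f)$, which turns the first inequality applied to $g=f-h_m(f)$ into the tail bound $\n{f-h_m(f)}\leqslant\sum_{k=m+1}^{\infty}q_k(f)\rho(G_k(f))$. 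You instead prove convergence first: absolute summability of $\n{h_{m+1}(f)-h_m(f)}\leqslant q_{m+1}(f)\rho(G_{m+1}(f))$ makes $(h_m(f))$ norm-Cauchy, the limit is identified as $f$ by the componentwise computation $(h_m(f))_\alpha=f_\alpha(1-p_{m+1}(f)/\n{f_\alpha})$ together with the vanishing-support principle (which is a legitimate and correctly justified use of property (3)), and then $\n{f}\leqslant\theta(f)$ falls out by passing to the limit in (\ref{eqn_h_less_than_theta}). Both arguments hinge on the same rearrangement identity (\ref{eqn_h}), which you derive correctly. What the paper's route buys is the explicit quantitative estimate (\ref{eqn_f_h_m(f)}), which is reused later in Lemma \ref{lem_convex_combination} and in the proof of Theorem \ref{thm_main_tool}; but your Cauchy estimate delivers exactly the same tail bound $\n{f-h_m(f)}\leqslant\sum_{k=m+1}^{\infty}q_k(f)\rho(G_k(f))$, so nothing downstream would be lost. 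What your route buys is that it avoids the separate case analysis on whether $\ran(f)$ is finite and the choice of an almost-norming finitely supported $x$, replacing both by the softer density argument.
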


\begin{proof} Assume that $f \in X^*$ is non-zero and that $\theta(f)<\infty$. Given $\gamma \in G_m(f)$, let $k_\gamma$ denote the unique index $k  \leqslant m$ for which $\gamma \in H_k(f)$. Assuming $p_m(f)>0$, we have
\begin{align}\label{eqn_h}
h_m(f) &\;=\; \sum_{k=1}^m q_k(f)\left(\sum_{\gamma \in G_k(f)} \frac{f_\gamma}{\n{f_\gamma}} \right)\nonumber \\
&\;=\; \sum_{\gamma \in G_m(f)} \frac{f_\gamma}{\n{f_\gamma}} \left(\sum_{k = k_\gamma}^m q_k(f) \right)\nonumber \\
&\;=\; \sum_{\gamma \in G_m(f)} \frac{f_\gamma}{\n{f_\gamma}}(p_{k_\gamma}(f)-p_{m+1}(f)) \;=\; \left(\sum_{\gamma \in G_m(f)} f_\gamma\right) - p_{m+1}(f)\omega_m(f).
\end{align}
If $\ran(f)$ is finite, then the definition of $h_m(f)$, together with (\ref{eqn_h}), shows that $h_m(f)=f$ whenever $m \geqslant |\ran(f)|-1$. Coupling this with (\ref{eqn_h_less_than_theta}) yields the conclusion.

For the remainder of the proof, we assume that $\ran(f)$ is infinite and thus that (\ref{eqn_h}) applies for all $m \in \N$. According to properties (1) and (3) of our system of projections above, given $\ep>0$, there exists a finite set $F\subseteq \Gamma$ and $x \in X$ such that $\n{x}=1$, $x \;=\; \sum_{\gamma \in F} x_\gamma$ and $\n{f} \leqslant f(x) + \ep$. Choose $M\in \N$ large enough so that $F \cap G_m(f) = F \cap G_M(f)$ whenever $m \geqslant M$. Observe that if $\gamma \in F\setminus G_M(f)$, then $f_\gamma=0$. If not, then as $p_m(f)\to 0$, there must exist $m\geqslant M$ such that $\gamma \in G_m(f)$, giving $\gamma \in F \cap G_m(f)=F \cap G_M(f)$, which is false.  Hence, using the above facts, given $m \geqslant M$, we have 
\begin{align*}
\n{f}-\ep \;\leqslant\; f(x) &\;=\; \sum_{\gamma \in F} f_\gamma(x)\\
&\;=\; \sum_{\gamma \in F \cap G_m(f)} f_\gamma(x) + \sum_{F\setminus G_m(f)} f_\gamma(x) + \sum_{G_m(f)\setminus F} f_\gamma(x) \\
&\;=\; \sum_{\gamma \in G_m(f)} f_\gamma(x) \\
&\;=\; h_m(f)(x) + p_{m+1}(f)\omega_n(f)(x)\\
&\;\leqslant\; \theta(f) + p_{m+1}(f)\omega_M(f)(x) \;\to\; \theta(f),
\end{align*}
using equation (\ref{eqn_h_less_than_theta}) above, the fact that $\omega_m(f)(x)=\omega_M(f)(x)$ (by the choice of $M$), and as $p_m(f) \to 0$ as $m\to\infty$. Since this holds for all $\ep > 0$, it follows that $\n{f} \leqslant \theta(f)$.

Now assume that $\theta(f) < \infty$. Fix $m\in\N$ and set $g=f-h_m(f)$. We observe that $p_k(g)=p_{m+k}(f)$ and $G_k(g)=G_{m+k}(f)$ for all $k \in \N$. Therefore, using the above applied to $g$ yields
\begin{equation}\label{eqn_f_h_m(f)}
\n{f-h_m(f)} \;\leqslant\; \theta(g) \;=\; \sum_{k=1}^\infty q_k(g)\rho(G_k(g)) \;=\; \sum_{k=m+1}^\infty q_k(f)\rho(G_k(f)).
\end{equation}
Because $\theta(f)<\infty$, the quantity on the right hand side tends to $0$ as $m\to\infty$.
\end{proof}

We continue by defining, for $m,n \in \N$, $m<n$,
\[
g_{m,n}(f) \;=\; \begin{cases} {\ds \frac{\theta(f-h_m(f))}{\rho(G_n(f))}}\omega_n(f)  & \text{if $p_n(f) > 0$}\\ 0 & \text{otherwise,}\end{cases} 
\]
and the approximating functionals
\[
j_{m,n}(f) \;=\; h_m(f) + g_{m,n}(f).
\]
One of the principal reasons for considering the $j_{m,n}(f)$ is brought to light in the next lemma.

\begin{lem}\label{lem_convex_combination} Let $f \in X^*$, $m \in \N$, and suppose that $\theta(f)<\infty$. Then there exist constants $\lambda_n \geqslant 0$, $n>m$, such that
\[
\sum_{n=m+1}^\infty \lambda_n \;=\; 1 \qquad\text{and}\qquad f \;=\; \sum_{n=m+1}^\infty \lambda_n j_{m,n}(f).
\]
\end{lem}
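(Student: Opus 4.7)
The plan is to define the coefficients explicitly by matching components. Motivated by the observation that the factor $\theta(f-h_m(f))/\rho(G_n(f))$ built into $g_{m,n}(f)$ invites cancellation, I would take
\[
\lambda_n \;=\; \frac{q_n(f)\rho(G_n(f))}{\theta(f-h_m(f))}, \qquad n>m,
\]
so that $\lambda_n g_{m,n}(f)=q_n(f)\omega_n(f)$. The degenerate case $g:=f-h_m(f)=0$ (equivalently $\theta(g)=0$, by Lemma~\ref{lem_theta_dominate}) is handled first: each $g_{m,n}(f)$ then vanishes by inspection, so $j_{m,n}(f)=f$ and any non-negative weights of total mass $1$ suffice.

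Assume then $g\neq 0$. Using (\ref{eqn_h}) one reads off $g_\gamma=p_{m+1}(f)f_\gamma/\n{f_\gamma}$ for $\gamma\in G_m(f)$ and $g_\gamma=f_\gamma$ otherwise, from which $G_k(g)=G_{m+k}(f)$ and $p_k(g)=p_{m+k}(f)$ for every $k\in\N$. Substituting into the definition of $\theta$ gives $\theta(g)=\sum_{n=m+1}^{\infty}q_n(f)\rho(G_n(f))$, so the $\lambda_n$ are well-defined, non-negative and sum to~$1$.

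It remains to show $g=\sum_{n>m}q_n(f)\omega_n(f)$. Absolute norm convergence is immediate from $\n{\omega_n(f)}\leqslant\rho(G_n(f))$ and $\sum_{n>m}q_n(f)\rho(G_n(f))=\theta(g)<\infty$. Applying the norm-continuous $P_\gamma^*$ termwise, and using idempotency of the $P_\gamma$ together with property~(1), one obtains $P_\gamma^*\omega_n(f)=f_\gamma/\n{f_\gamma}$ if $\gamma\in G_n(f)$ and $0$ otherwise; thus for $\gamma\in H_k(f)$ the $\gamma$-component of the series telescopes to
\[
\Bigl(\sum_{n\geqslant\max(k,m+1)} q_n(f)\Bigr)\frac{f_\gamma}{\n{f_\gamma}} \;=\; p_{\max(k,m+1)}(f)\frac{f_\gamma}{\n{f_\gamma}},
\]
which matches $g_\gamma$ in both regimes $k\leqslant m$ (value $p_{m+1}(f)$) and $k>m$ (value $p_k(f)=\n{f_\gamma}$). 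Since two elements of $X^*$ with identical $\gamma$-components agree on each $P_\gamma X$, and hence on all of $X$ by property~(3) and continuity, this gives $g=\sum_{n>m} q_n(f)\omega_n(f)$, and therefore $f=h_m(f)+g=\sum_{n>m}\lambda_n j_{m,n}(f)$.

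The main obstacle is the bookkeeping to verify $G_k(g)=G_{m+k}(f)$, $p_k(g)=p_{m+k}(f)$, and the component-wise identification above; once these are in place, everything else reduces to telescoping sums and a standard density argument on the dual space.
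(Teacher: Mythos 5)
Your proposal is correct and follows essentially the same route as the paper: the weights $\lambda_n = q_n(f)\rho(G_n(f))/\theta(f-h_m(f))$ are exactly those chosen there, with the same splitting into the degenerate case $f=h_m(f)$ and the case $\theta(f-h_m(f))>0$. The only difference is cosmetic: where the paper simply quotes $f=\sum_n q_n(f)\omega_n(f)$ and $\theta(f-h_m(f))=\sum_{n>m}q_n(f)\rho(G_n(f))$ from Lemma \ref{lem_theta_dominate} and (\ref{eqn_f_h_m(f)}), you re-derive these identities by the component-wise telescoping computation, which is correct but already contained in the earlier lemma.
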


\begin{proof}
Fix $m \in \N$ and assume $\theta(f)<\infty$. We know, from Lemma \ref{lem_theta_dominate} and (\ref{eqn_f_h_m(f)}), that
\begin{equation}
f \;=\; \sum_{n=1}^\infty q_n(f)\omega_n(f) \qquad\text{and}\qquad \theta(f-h_m(f)) \;=\; \sum_{n=m+1}^\infty q_n(f)\rho(G_n(f)). \label{eqn_infinite_sums}
\end{equation}
There are two cases. If $f=h_m(f)$, then $p_n(f)=0$ whenever $n>m$, which implies that $j_{m,n}(f)=h_m(f)=f$ for all such $n$. In this case, it is clearly sufficient to let $\lambda_{m+1}=1$ and $\lambda_n=0$ for $n>m+1$. Instead, if $f\neq h_m(f)$ then $\theta(f-h_m(f)) > 0$. This time, given $n>m$, set
\[
\lambda_n \;=\; \frac{q_n(f)\rho(G_n(f))}{\theta(f-h_m(f))}.
\]
By (\ref{eqn_infinite_sums}) and Lemma \ref{lem_theta_dominate}, we have $\sum_{n=m+1}^\infty \lambda_n = 1$ and
\begin{align*}
\sum_{n=m+1}^\infty \lambda_n j_{m,n}(f) \;&=\; \sum_{n=m+1}^\infty \lambda_n h_m(f) + q_n(f)\omega_n(f)\\
&=\; h_m(f) + \sum_{n=m+1}^\infty q_n(f)\omega_n(f) \;=\; f. \tag*{\qedhere}
\end{align*}
\end{proof}

In our final lemma, we consider what happens when we have a net $j_{m^\alpha,n^\alpha}(f^\alpha)$ of the approximating functionals converging to an infinitely supported element.

\begin{lem}\label{lem_net_1} Let $d \in X^*$ have infinite support, and consider a net $j^\alpha := j_{m^\alpha,n^\alpha}(f^\alpha)$, such that $j^\alpha \stackrel{w^*}{\to} d$. Suppose moreover that $\sup_\alpha \theta(f^\alpha) <\infty$. Then $f^\alpha \stackrel{w^*}{\to} d$.
\end{lem}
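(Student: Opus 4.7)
The plan is to decompose $f^\alpha = h_{m^\alpha}(f^\alpha) + (f^\alpha - h_{m^\alpha}(f^\alpha))$ and show that the two summands converge in $w^*$ to $d$ and to $0$, respectively; the first will be tracked via the identity $h_{m^\alpha}(f^\alpha) = j^\alpha - g_{m^\alpha,n^\alpha}(f^\alpha)$, the second by coordinate-wise estimates. All quantities that appear are uniformly norm-bounded by $\sup_\alpha \theta(f^\alpha) < \infty$, via Lemma~\ref{lem_theta_dominate} and the defining formulas, so $w^*$-convergence can be verified on the norm-dense subspace $\lspan (P_\gamma X)_{\gamma \in \Gamma}$ supplied by property~(3), equivalently pointwise on each $P_\gamma X$.

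The first step is to show $\rho(G_{n^\alpha}(f^\alpha)) \to \infty$. For each $\gamma \in \supp(d)$, fix $x_\gamma \in P_\gamma X$ with $d(x_\gamma) \neq 0$; since $j^\alpha(x_\gamma) = (j^\alpha)_\gamma(x_\gamma) \to d(x_\gamma) \neq 0$, eventually $(j^\alpha)_\gamma \neq 0$, hence $\gamma \in \supp(j^\alpha) \subseteq G_{n^\alpha}(f^\alpha)$ eventually. Thus $\supp(d) \subseteq \lim \inf G_{n^\alpha}(f^\alpha)$ is infinite, and Lemma~\ref{lem_lim_inf} gives $\rho(G_{n^\alpha}(f^\alpha)) \to \infty$. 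The defining formula of $g_{m^\alpha,n^\alpha}(f^\alpha)$ yields $\n{(g_{m^\alpha,n^\alpha}(f^\alpha))_\gamma} \leqslant \theta(f^\alpha - h_{m^\alpha}(f^\alpha))/\rho(G_{n^\alpha}(f^\alpha)) \to 0$ uniformly in $\gamma$, so the boundedness-plus-pointwise argument gives $g_{m^\alpha,n^\alpha}(f^\alpha) \stackrel{w^*}{\to} 0$, and hence $h_{m^\alpha}(f^\alpha) \stackrel{w^*}{\to} d$.

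The main obstacle is to prove $p_{m^\alpha+1}(f^\alpha) \to 0$. Sharpening the above, each $\gamma \in \supp(d)$ must in fact lie in $G_{m^\alpha}(f^\alpha)$ eventually: otherwise, pass to a subnet on which $\gamma \notin G_{m^\alpha}(f^\alpha)$ always; then, because $\gamma \in G_{n^\alpha}(f^\alpha)$ eventually still holds on the subnet, the formula for $j^\alpha$ forces $\n{(j^\alpha)_\gamma} = \theta(f^\alpha - h_{m^\alpha}(f^\alpha))/\rho(G_{n^\alpha}(f^\alpha)) \to 0$ along the subnet, contradicting $(j^\alpha)_\gamma(x_\gamma) \to d(x_\gamma) \neq 0$. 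Consequently $\supp(d) \subseteq \lim \inf G_{m^\alpha+1}(f^\alpha)$, and Lemma~\ref{lem_lim_inf} gives $\rho(G_{m^\alpha+1}(f^\alpha)) \to \infty$. Using the monotonicity of $\rho$ and the telescoping identity $\sum_{k \geqslant m^\alpha+1} q_k(f^\alpha) = p_{m^\alpha+1}(f^\alpha)$,
\[
\rho(G_{m^\alpha+1}(f^\alpha))\,p_{m^\alpha+1}(f^\alpha) \;\leqslant\; \sum_{k = m^\alpha+1}^\infty q_k(f^\alpha)\rho(G_k(f^\alpha)) \;=\; \theta(f^\alpha - h_{m^\alpha}(f^\alpha)) \;\leqslant\; \sup_\alpha \theta(f^\alpha),
\]
so dividing by $\rho(G_{m^\alpha+1}(f^\alpha))$ forces $p_{m^\alpha+1}(f^\alpha) \to 0$.

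For the final step, inspection of (\ref{eqn_h}) shows $(f^\alpha - h_{m^\alpha}(f^\alpha))_\gamma = (f^\alpha_\gamma/\n{f^\alpha_\gamma})\,p_{m^\alpha+1}(f^\alpha)$, of norm exactly $p_{m^\alpha+1}(f^\alpha)$, when $\gamma \in G_{m^\alpha}(f^\alpha)$; and equals $f^\alpha_\gamma$, whose norm is at most $p_{m^\alpha+1}(f^\alpha)$, when $\gamma \notin G_{m^\alpha}(f^\alpha)$ (since $\n{f^\alpha_\gamma} < p_{m^\alpha}(f^\alpha)$ and the next value in $\ran(f^\alpha)$ is $p_{m^\alpha+1}(f^\alpha)$). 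Each coordinate of $f^\alpha - h_{m^\alpha}(f^\alpha)$ thus vanishes in norm, and combined with the uniform norm bound this yields $f^\alpha - h_{m^\alpha}(f^\alpha) \stackrel{w^*}{\to} 0$. Together with the earlier convergence of $h_{m^\alpha}(f^\alpha)$, this delivers $f^\alpha \stackrel{w^*}{\to} d$.
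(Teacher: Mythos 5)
Your proof is correct and follows essentially the same route as the paper's: the same decomposition $f^\alpha = h_{m^\alpha}(f^\alpha) + (f^\alpha - h_{m^\alpha}(f^\alpha))$, Lemma~\ref{lem_lim_inf} applied first to the $G_{n^\alpha}(f^\alpha)$ and then to the $G_{m^\alpha}(f^\alpha)$, and the same uniform-boundedness-plus-coordinatewise criterion for $w^*$-convergence via property (3). The only cosmetic difference is that you establish $\rho(G_{m^\alpha+1}(f^\alpha))\to\infty$ by a direct subnet argument on the coordinates of $j^\alpha$, whereas the paper simply reruns its first step using the already-established convergence $h_{m^\alpha}(f^\alpha)\stackrel{w^*}{\to} d$ together with $\supp(h_{m^\alpha}(f^\alpha))\subseteq G_{m^\alpha}(f^\alpha)$.
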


\begin{proof} Let $L > 0$ such that $\theta(f^\alpha) \leqslant L$ for all $\alpha$. The proof will be comprised of a number of steps. In the first step, we show that $\rho(G_{n^\alpha}(f^\alpha)) \to \infty$. Let $\gamma \in \supp(d)$. As $P^*_\gamma$ is $w^*$-$w^*$-continuous, $j^\alpha_\gamma \stackrel{w^*}{\to} d_\gamma \neq 0$. In particular, there exists $\alpha$ such that $j^\beta_\gamma \neq 0$ for all $\beta \geqslant \alpha$. It follows that
\[
\supp(d) \;\subseteq\; \bigcup_\alpha \bigcap_{\beta \geqslant \alpha} \supp(j^\beta) \;\subseteq\; \bigcup_\alpha \bigcap_{\beta \geqslant \alpha} G_{n^\beta}(f^\beta).
\]
Consequently, $\rho(G_{n^\alpha}(f^\alpha)) \to \infty$ by Lemma \ref{lem_lim_inf}.

In the second step, we use the first step to prove that $g_{m^\alpha,n^\alpha}(f^\alpha) \stackrel{w^*}{\to} 0$. Fix $\gamma \in \Gamma$. Since $\n{\omega_n(f)_\gamma} \leqslant 1$, we observe that if $p_{n^\alpha}(f^\alpha)>0$ then
\[
\n{g_{m^\alpha,n^\alpha}(f^\alpha)_\gamma} \;\leqslant\; \frac{\theta(f^\alpha-h_{m^\alpha}(f^\alpha))}{\rho(G_{n^\alpha}(f^\alpha))} \;\leqslant\; \frac{\theta(f^\alpha)}{\rho(G_{n^\alpha}(f^\alpha))} \;\leqslant\; \frac{L}{\rho(G_{n^\alpha}(f^\alpha))},
\]
and $\n{g_{m^\alpha,n^\alpha}(f^\alpha)_\gamma}=0$ otherwise. In any case, it follows that
\begin{equation}\label{eqn_g_1}
\n{g_{m^\alpha,n^\alpha}(f^\alpha)_\gamma} \to 0.
\end{equation}
 Moreover, by the definition of $\rho$,
\begin{equation}\label{eqn_g_2}
\n{g_{m^\alpha,n^\alpha}(f^\alpha)} \;=\; \frac{\theta(f^\alpha-h_{m^\alpha}(f^\alpha))\n{\omega_{n^\alpha}(f^\alpha)}}{\rho(G_{n^\alpha}(f^\alpha))} \;\leqslant\; \theta(f^\alpha-h_{m^\alpha}(f^\alpha)) \;\leqslant\; L,
\end{equation}
if $p_{n^\alpha}(f^\alpha)>0$, and if not then of course this inequality still holds. Hence, by considering (\ref{eqn_g_1}), (\ref{eqn_g_2}) and property (3) of our system of projections, we deduce that $g_{m^\alpha,n^\alpha}(f^\alpha) \stackrel{w^*}{\to} 0$.

Of course, this implies that $h_{m^\alpha}(f^\alpha) \stackrel{w^*}{\to} d$. We complete the proof by showing, in this final step, that
\begin{equation}\label{eqn_h_limit}
f^\alpha - h_{m^\alpha}(f^\alpha) \;\stackrel{w^*}{\to}\; 0.
\end{equation}
We repeat the first step, substituting $G_{m^\alpha}(f^\alpha)$ for $G_{n^\alpha}(f^\alpha)$ and noting that $\supp (h_{m^\alpha}(f^\alpha))$ $\subseteq G_{m^\alpha}(f^\alpha)$, to obtain $\rho(G_{m^\alpha}(f^\alpha)) \to \infty$. Since $\rho$ is increasing, given arbitrary $f\in X^*$ and $m\in\N$ we have
\begin{equation}\label{eqn_f_1}
p_m(f)\rho(G_m(f)) \;=\; \sum_{n=m}^\infty q_n(f)\rho(G_m(f)) \;\leqslant\; \sum_{n=m}^\infty q_n(f)\rho(G_n(f)) \;\leqslant\; \theta(f).
\end{equation}
If we fix $\gamma \in \Gamma$, then by (\ref{eqn_h}) and (\ref{eqn_f_1}), whenever $p_{m^\alpha}(f^\alpha)>0$ we have
\[
\n{f^\alpha_\gamma - h_{m^\alpha}(f^\alpha)_\gamma} \;=\; \n{p_{m^\alpha+1}(f^\alpha)\omega_{m^\alpha}(f^\alpha)_\gamma} \;\leqslant\; p_{m^\alpha+1}(f^\alpha) \;\leqslant\; \frac{L}{\rho(G_{m^\alpha}(f^\alpha))},
\]
and $f^\alpha = h_{m^\alpha}(f^\alpha)$ otherwise. Therefore
\begin{equation}\label{eqn_f_2}
\n{f^\alpha_\gamma - h_{m^\alpha}(f^\alpha)_\gamma} \;\to\; 0,
\end{equation}
for all $\gamma \in \Gamma$. Moreover,
\begin{equation}\label{eqn_f_3}
\n{f^\alpha - h_{m^\alpha}(f^\alpha)} \;\leqslant\; \theta(f^\alpha-h_{m^\alpha}(f^\alpha)) \;\leqslant\; \theta(f^\alpha) \;\leqslant\; L.
\end{equation}
Therefore, by considering (\ref{eqn_f_2}), (\ref{eqn_f_3}) and again property (3) of our projections, we obtain (\ref{eqn_h_limit}) as required.
\end{proof}

\section{Approximation of norms}\label{sect_main_tools}

We begin this section with a few more preliminaries. Given a Banach space $(X,\ndot)$ and a bounded set $C \subseteq X^*$, recall that $B \subseteq C$ is called a \emph{James boundary} of $C$ if, given $x \in X$, there exists $b \in B$ such that
\[
b(x) \;=\; \sup\set{f(x)}{f \in C}.
\]
Hereafter, we will simply refer to a James boundary of a given set as a \emph{boundary}. Given an equivalent norm $\tndot$ on $X$, we say that $B$ is a boundary of $(X,\tndot)$, or simply $\tndot$, if it is a boundary of $B_{(X,\ttrin)^*}$, i.e.~$\tn{b}\leqslant 1$ for all $b \in B$ and, given $x \in X$, there exists $b \in B$ such that $b(x)=\tn{x}$. For example, by the Hahn-Banach and Krein-Milman Theorems, the set of extreme points $\ext(B_{(X,\ttrin)^*})$ is always a boundary of $\tndot$.

As well as boundaries, we recall the notion of norming sets. Let $V$ be a subspace of $X$. A bounded set $E\subseteq X^*$ is called \emph{norming for $V$} if there exists $r>0$ such that
\[
\sup\set{f(x)}{f \in E} \;\geqslant\; r\n{x},
\]
for all $x \in V$. When $V=X$ we just say that $E$ is norming.

We will require the following notion, which has been used to provide a sufficient condition for the existence of equivalent polyhedral and $C^\infty$-smooth norms.

\begin{defn}[{\cite[Definition 11]{fpst:14}}]\label{defn_lrc}
Let $X$ be a Banach space. We say that $E \subseteq X^*$ is \emph{$w^*$-locally relatively norm-compact} ($w^*$-LRC for short) if, given $f \in E$, there exists a $w^*$-open set $U \subseteq X^*$, such that $f \in U$ and $E \cap U$ is relatively norm-compact. We say that $E$ is \emph{$\sigma$-$w^*$-LRC} if it can be expressed as the union of countably many $w^*$-LRC sets.
\end{defn}

The following result motivates their introduction. We will call a subset $E \subseteq X^*$ $w^*$-$K_\sigma$ if it is the union of countably many $w^*$-compact sets.

\begin{thm}[{\cite[Theorem 2.1]{b:14}} and {\cite[Theorem 7]{fpst:14}}]\label{thm_tb}
Let $(X,\ndot)$ be a Banach space having a boundary that is both $\sigma$-$w^*$-LRC and $w^*$-$K_\sigma$. Then $\ndot$ can be approximated by both $C^\infty$-smooth norms and polyhedral norms.
\end{thm}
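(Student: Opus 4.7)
The plan is to first extract from the two hypotheses on $B$ the strong structural consequence that $B$ is actually $\sigma$-compact in the norm topology, and then to build both approximating norms directly from the resulting countable norm-compact decomposition. Write $B = \bigcup_n E_n$ with each $E_n$ $w^*$-LRC and $B = \bigcup_m C_m$ with each $C_m$ $w^*$-compact, and set $B_{n,m} := E_n \cap C_m$. Each $B_{n,m}$ is $w^*$-compact (as a $w^*$-closed subset of $C_m$) and inherits the $w^*$-LRC property from $E_n$. For every $f \in B_{n,m}$ the LRC property supplies a $w^*$-open neighborhood $U_f$ with $E_n \cap U_f$ norm-relatively compact, and by $w^*$-compactness finitely many such $U_f$ cover $B_{n,m}$; hence $B_{n,m}$ is itself norm-relatively compact. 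Re-enumerating, $B = \bigcup_k K_k$ for a countable nested family of norm-compact sets $K_k$.

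For the polyhedral approximation, fix $\ep>0$ and pick summable positive scalars $\delta_k$ with $2\sum_k \delta_k < \ep$. For each $k$ choose a finite set $F_k \subseteq K_k$ that is a norm-$\delta_k$-net in $K_k$, and define
\[
\tn{x} \;:=\; \sup\bigl\{ (1+2\delta_k)\,|f(x)| \,:\, f \in F_k,\ k \in \N \bigr\}.
\]
The boundary property of $B = \bigcup_k K_k$ combined with the net property of each $F_k$ yields $\ndot \leqslant \tndot$, while $(1+2\delta_k)\n{f} \leqslant 1+2\delta_k \leqslant 1+\ep$ for $f \in F_k \subseteq B_{X^*}$ gives $\tndot \leqslant (1+\ep)\ndot$. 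For polyhedrality, I would invoke Fonf's compact-boundary characterization on any finite-dimensional $E \subseteq X$: the geometric decay $\delta_k \to 0$, combined with the norm-compactness of $K_k$ and the finite dimension of $E$, forces only finitely many of the scaled $F_k$ to contribute surviving extreme points to $B_{(E,\ttrin)^*}$, so the restriction is a polytope.

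For the $C^\infty$-smooth approximation I would reuse the finite sets $F_k = \{f^k_1,\ldots,f^k_{r_k}\}$ but replace each finite $\ell_\infty$-maximum by a $C^\infty$ surrogate such as
\[
\Phi_k(x) \;:=\; \lambda_k^{-1} \log \sum_{i=1}^{r_k} \cosh\bigl(\lambda_k f^k_i(x)\bigr),
\]
which is even, convex, $C^\infty$ on $X$, and satisfies $\bigl|\Phi_k(x) - \max_i |f^k_i(x)|\bigr| \leqslant \lambda_k^{-1}\log(2r_k)$ for every $x$. A weighted square-sum $\tn{x}^2 := \sum_k c_k \Phi_k(x)^2$ with $c_k>0$ summable produces a candidate norm; choosing $\lambda_k$ large enough that term-by-term derivatives converge uniformly on bounded sets gives $C^\infty$-smoothness on $X \setminus \{0\}$, and tuning $c_k$ and $\lambda_k$ against the $\sigma$-norm-compact decomposition of $B$ gives the two-sided $\ep$-approximation by standard smooth-renorming arguments.

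The main obstacle I anticipate is the polyhedrality verification. The $C^\infty$ construction and both pairs of sandwich inequalities are largely mechanical once the $\sigma$-norm-compact decomposition of $B$ is in place, but showing that $B_{(E,\ttrin)^*}$ has only finitely many extreme points for every finite-dimensional $E$ requires carefully combining Fonf's compact-boundary characterization of polyhedrality with the geometric separation produced by the decay $\delta_k \to 0$, and this is the step I expect to occupy the bulk of a detailed proof.
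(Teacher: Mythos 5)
First, a point of reference: the paper does not prove Theorem \ref{thm_tb} at all --- it is imported as a black box from \cite{b:14} and \cite{fpst:14} --- so there is no internal proof to compare yours against, and your attempt must stand on its own. It does not: it fails at the very first step, and the failure is not repairable. The local error is your claim that $B_{n,m}=E_n\cap C_m$ is $w^*$-compact ``as a $w^*$-closed subset of $C_m$''; a $w^*$-LRC set $E_n$ need not be $w^*$-closed, so this intersection need not be $w^*$-closed in $C_m$, and your finite-subcover argument (which in any case only uses neighbourhoods of points \emph{of} $E_n$) does not apply. More seriously, the conclusion you are after --- that $B$ is $\sigma$-compact in the norm topology --- is simply false under the hypotheses. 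Take $X=c_0(\Gamma)$ with $\Gamma$ uncountable and $B=\set{\pm e^*_\gamma}{\gamma\in\Gamma}\cup\{0\}\subseteq\ell_1(\Gamma)$. This is a boundary of the canonical norm; it is $\sigma$-$w^*$-LRC, being the union of the $w^*$-relatively discrete set $\set{\pm e^*_\gamma}{\gamma\in\Gamma}$ and the singleton $\{0\}$; and it is $w^*$-compact (a two-fold copy of the one-point compactification of discrete $\Gamma$), hence $w^*$-$K_\sigma$. Yet distinct points of $B$ lie at mutual $\pndot{1}$-distance at least $1$, so every norm-compact subset of $B$ is finite, and an uncountable set is not a countable union of finite sets. (One can also check that $B$ admits \emph{no} countable decomposition into pieces that are simultaneously $w^*$-compact and $w^*$-LRC: an uncountable piece must contain $0$ to be $w^*$-compact, but every $w^*$-neighbourhood of $0$ contains uncountably many $e^*_\gamma$, destroying local relative norm-compactness there.)

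Everything downstream collapses with this step. Your candidate norm $\tndot$ is a supremum over the countable set $\bigcup_k F_k$; in the example above, every element of $\ell_1(\Gamma)$ has countable support, so there exists $\gamma$ with $f(e_\gamma)=0$ for every $f\in\bigcup_k F_k$, whence $\tn{e_\gamma}=0$ and $\tndot$ is not even a norm --- the problem is not merely the polyhedrality verification you flagged as the main obstacle. In the separable setting your outline is essentially the classical route (reduce to a countable boundary, scale by $1+2\delta_k$ so that $w^*$-accumulation points of the new boundary fail to attain the supremum, apply Fonf's criterion \cite{fonf:80}, and smooth the finite maxima for the $C^\infty$ part), and that is how the separable approximation results such as \cite{dfh:98} proceed. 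But the entire purpose of Theorem \ref{thm_tb} in this paper is the non-separable case, where the two coverings --- by $w^*$-LRC sets and by $w^*$-compact sets --- must be exploited jointly and \emph{locally}, via the $w^*$-open neighbourhoods furnished by Definition \ref{defn_lrc}, rather than merged into a global $\sigma$-norm-compactness that does not hold. That is the idea your proposal is missing, and it is the substance of the proofs in \cite{b:14} and \cite{fpst:14}.
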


If $(e_\gamma,e^*_\gamma)_{\gamma \in \Gamma}$ is a bounded M-basis, then $E:=\set{e^*_\gamma}{\gamma \in \Gamma} \cup \{0\}$ is both $\sigma$-$w^*$-LRC and $w^*$-$K_\sigma$, as is $\lspan(E)$. More information concerning the topological behaviour of $w^*$-LRC sets can be found in \cite{smith:17}. We quote one result from that study here.

\begin{thm}[{\cite[Theorem 2.3]{smith:17}}]\label{thm_linspan}
If $E$ is a $\sigma$-$w^*$-LRC and $w^*$-$K_\sigma$ subset of a dual Banach space $X^*$, then so is the subspace $\lspan(E)$.
\end{thm}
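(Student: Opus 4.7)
Both target properties are stable under countable unions, so the plan is to cover $\lspan(E)$ by a countable family of simpler pieces. Write $E = \bigcup_n E_n$ with each $E_n$ being $w^*$-LRC, and separately $E = \bigcup_m K_m$ with each $K_m$ being $w^*$-compact. For $N, M \in \N$ and tuples $\vec{n} \in \N^N$, consider
\[
S_{\vec{n}} \;=\; \Big\{\sum_{i=1}^N \lambda_i f_i \;:\; |\lambda_i| \leqslant M,\; f_i \in E_{n_i}\Big\},
\]
and let $T_{\vec{m}}$ denote the analogue using $K_{m_i}$ in place of $E_{n_i}$. Then $\lspan(E)$ equals the countable union of all $S_{\vec{n}}$ (over $N$, $M$, $\vec{n}$) and also the countable union of all $T_{\vec{m}}$, reducing the proof to analysing these two families of pieces.

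For the $w^*$-$K_\sigma$ property, scalar multiplication $(\lambda, g) \mapsto \lambda g$ is jointly $w^*$-continuous on bounded subsets of $\R \times X^*$ (a quick $\varepsilon$-argument using boundedness), while addition is always $w^*$-continuous on $X^* \times X^*$. Hence each $T_{\vec{m}}$ is the $w^*$-continuous image of the $w^*$-compact set $\prod_{i=1}^N ([-M, M] \times K_{m_i})$ under the map $((\lambda_i, f_i))_{i=1}^N \mapsto \sum_{i=1}^N \lambda_i f_i$, and is therefore $w^*$-compact; taking countable unions gives that $\lspan(E)$ is $w^*$-$K_\sigma$.

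For the $\sigma$-$w^*$-LRC property, the difficulty is that an element $f = \sum_{i=1}^N \lambda_i f_i$ of $S_{\vec{n}}$ admits many alternative representations as a linear combination, so the $w^*$-LRC property at each $f_i$ does not lift directly. The plan is to perform a further countable decomposition of $S_{\vec{n}}$ (using combinatorial data such as sign patterns of the $\lambda_i$ and a \emph{minimal-representation length}) to remove degenerate cases, and then, for each $f$ in a resulting piece, to construct a $w^*$-open $U \ni f$ built from $w^*$-open $V_i \ni f_i$ (with $E_{n_i} \cap V_i$ relatively norm-compact), augmented by separation conditions using test vectors $x_j \in X$, such that every $g = \sum_{i=1}^N \mu_i g_i \in S_{\vec{n}} \cap U$ is forced to admit a decomposition with $g_i \in V_i$ and $\mu_i$ near $\lambda_i$; from this, $S_{\vec{n}} \cap U$ is seen to be relatively norm-compact. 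The principal obstacle is precisely this last construction: because representations are non-unique, $U$ must also rule out nearby elements $g$ placing extra mass on unpredictable supports, and it is here that the $w^*$-compactness of the $K_m$ becomes essential in uniformly controlling the admissible $g_i$ and upgrading the pointwise $w^*$-LRC cover of each $E_{n_i}$ into a globally usable neighbourhood.
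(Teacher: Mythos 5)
A preliminary remark: the paper itself offers no proof of Theorem \ref{thm_linspan} — it is imported verbatim from \cite{smith:17} — so there is no internal argument to compare yours with; I can only assess your proposal on its own terms. The $w^*$-$K_\sigma$ half is correct and complete: the countable decomposition into the sets $T_{\vec{m}}$, the joint $w^*$-continuity of scalar multiplication on bounded subsets of $\R\times X^*$, and the $w^*$-continuity of addition do show that each $T_{\vec{m}}$ is $w^*$-compact, and the union over all $N$, $M$, $\vec{m}$ is countable.

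The $\sigma$-$w^*$-LRC half, however, is not a proof but an announcement of intent, and the step you defer is the entire content of the theorem. You propose to find, for each $f=\sum_{i=1}^N\lambda_i f_i$ in a refined piece of $S_{\vec{n}}$, a $w^*$-open $U\ni f$ forcing every $g=\sum_{i=1}^N\mu_i g_i\in S_{\vec{n}}\cap U$ into a representation with $g_i\in V_i$. Because of cancellation, $w^*$-closeness of $g$ to $f$ imposes no constraint at all on the individual summands $g_i$, so no finite family of test vectors can achieve this. Concretely, take $E_{n_1}=E_{n_2}=\set{e_k^*}{k\in\N}\subseteq c_0^*=\lp{1}$ (a legitimate $w^*$-LRC piece of the $\sigma$-$w^*$-LRC, $w^*$-$K_\sigma$ set $\set{e_k^*}{k\in\N}\cup\{0\}$) and $M=1$. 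Then $S_{\vec{n}}$ contains $0=e_1^*-e_1^*$ together with all $e_k^*-e_j^*$, $k\neq j$; every $w^*$-neighbourhood of $0$ contains $\set{e_k^*-e_j^*}{k,j\geqslant K,\ k\neq j}$ for some $K$, which is $4$-separated in norm, so $S_{\vec{n}}$ is not $w^*$-LRC at $0$ for any choice of $U$, and all of the work must be done by the ``further countable decomposition''. You give no argument that a decomposition by sign patterns and minimal representation length suffices, and in general it cannot: the refinement has to track where the summands themselves live (for instance, which relatively norm-compact fragments of each $E_{n_i}$ they are drawn from), which in turn requires a preliminary structural analysis of sets that are simultaneously $w^*$-LRC and relatively $w^*$-compact. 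That analysis is precisely what the proof in \cite{smith:17} supplies, and it is absent here; as it stands the proposal establishes only the $w^*$-$K_\sigma$ conclusion.
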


Now let the Banach space $X$ support a system of projections $(P_\gamma)_{\gamma \in \Gamma}$ as in Section \ref{sect_approx_frame}. Suppose that, for each $\gamma \in \Gamma$, we are given a linear (but not necessarily closed) subspace $A_\gamma \subseteq P^*_\gamma X^*$. Given finite $F \subseteq \Gamma$, we define additional subspaces
\begin{align*}
V_F \;&=\; \lspan(P_\gamma X)_{\gamma \in F} \;=\; \set{x \in X}{\supp(x) \subseteq F},\\
W_F \;&=\; \lspan(P^*_\gamma X^*)_{\gamma \in F}  \;=\; \set{f \in X^*}{\supp(f)\subseteq F}, \text{ and}\\
A_F \;&=\; \lspan(A_\gamma)_{\gamma \in F}.
\end{align*}
Bearing in mind property (1) of our system of projections $(P_\gamma)_{\gamma \in \Gamma}$, by standard methods we see that $B_{W_F}$ is norming for $V_F$.

\begin{defn}\label{defn_admissible_set} We will call the family $(A_\gamma)_{\gamma \in \Gamma}$ \emph{admissible} if, given $\ep>0$, a finite set $F\subseteq \Gamma$, and a $w^*$-compact subset $C \subseteq W_F$ that is norming for $V_F$, there exists a set $D \subseteq W_F$ such that
\[
C \;\subseteq\; \cl{D}^{w^*} \;\subseteq\; C + \ep B_{W_F} \qquad\text{and}\qquad \cl{D}^{w^*} \cap A_F \text{ is a boundary of }\cl{D}^{w^*}.
\]
\end{defn}

It is obvious that the family $(P_\gamma^* X^*)_{\gamma \in \Gamma}$ is admissible. We will make use of this fact, together with other examples, later on.

Now we can state and prove our main tool.

\begin{thm}\label{thm_main_tool} Let $(A_\gamma)_{\gamma \in \Gamma}$ be an admissible family as above, and let $B \subseteq C \subseteq X^*$ be sets such that $C$ is $w^*$-compact and norming, and $B$ is a boundary of $C$ with the property that $\theta(f) < \infty$ whenever $f \in B$. Then, given $\ep>0$, there exists $D \subseteq X^*$ such that
\[
B \;\subseteq\; \cl{D}^{w^*} \;\subseteq\; C + \ep B_{X^*} \qquad\text{and}\qquad \cl{D}^{w^*} \cap A \text{ is a boundary of }\cl{D}^{w^*},
\]
where $A:=\lspan(A_\gamma)_{\gamma \in \Gamma}$. 
\end{thm}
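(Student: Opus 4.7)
The plan is to build $D$ as a union $D = \bigcup_F D_F$ indexed by finite subsets $F \subseteq \Gamma$, where each $D_F$ arises by applying admissibility to a carefully chosen $w^*$-compact norming set $C_F \subseteq W_F$ capturing the approximating functionals $j_{m,n}(f) \in W_{G_n(f)}$ from Section \ref{sect_approx_frame}. The decisive convergence, obtained by combining Lemma \ref{lem_theta_dominate} with equation (\ref{eqn_f_h_m(f)}) and the bound $\n{g_{m,n}(f)} \leqslant \theta(f-h_m(f))$, is
\[
\n{j_{m,n}(f) - f} \;\leqslant\; \n{f - h_m(f)} + \n{g_{m,n}(f)} \;\longrightarrow\; 0 \quad\text{as $m \to \infty$, uniformly in $n > m$,}
\]
whenever $\theta(f) < \infty$. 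A preliminary reduction is to enlarge both $B$ and $C$ by $\{0\}$, which is permissible (since $\theta(0) = 0$ and $B \cup \{0\}$ remains a boundary of $C \cup \{0\}$) and secures $0 \in C$; this lets us keep small balls around $0$ inside $C + \ep B_{X^*}$.

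For each $f \in B$ pick $M(f) \in \N$ so that $\n{j_{m,n}(f) - f} < \ep/3$ for all $m \geqslant M(f)$ and $n > m$. For each finite $F \subseteq \Gamma$ set
\[
J_F \;=\; \set{j_{m,n}(f)}{f \in B,\; m \geqslant M(f),\; n > m,\; G_n(f) = F},
\]
and $C_F = \cl{J_F}^{w^*} \cup (\ep/3) B_{W_F}$. Each $C_F \subseteq W_F$ is $w^*$-compact, norming for $V_F$ via the ball piece, and contained in $C + (\ep/3) B_{X^*}$ via $0 \in C$. Applying admissibility with precision $\ep/3$ supplies $D_F \subseteq W_F$ such that $C_F \subseteq \cl{D_F}^{w^*} \subseteq C + (2\ep/3) B_{X^*}$ and $\cl{D_F}^{w^*} \cap A_F$ is a boundary of $\cl{D_F}^{w^*}$. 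Set $D = \bigcup_F D_F$. Then $B \subseteq \cl{D}^{w^*}$ follows from the norm convergence $j_{m,n}(f) \to f$ together with $j_{m,n}(f) \in C_{G_n(f)} \subseteq \cl{D}^{w^*}$, and $\cl{D}^{w^*} \subseteq C + \ep B_{X^*}$ follows from the $w^*$-compactness of $C + (2\ep/3) B_{X^*}$.

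The hard part is verifying that $\cl{D}^{w^*} \cap A$ is a boundary of $\cl{D}^{w^*}$. Given $x \in X$, write $M(x) = \sup_{g \in \cl{D}^{w^*}} g(x) = \sup_F M_F(x)$, where $M_F(x) = \sup_{g \in \cl{D_F}^{w^*}} g(x)$ is attained at some $g_F \in \cl{D_F}^{w^*} \cap A_F$ by admissibility. If the outer supremum is attained at some $F$, the corresponding $g_F \in A$ is our boundary point. Otherwise one extracts a $w^*$-subnet $g_{F^\alpha} \to g^* \in \cl{D}^{w^*}$ with $g^*(x) = M(x)$, and the obstacle is that $g^*$ may have infinite support and hence lie outside $A$. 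Here Lemma \ref{lem_net_1} is decisive: approximating each $g_{F^\alpha}$ in norm (within $\ep/3$) by a $c^\alpha \in C_{F^\alpha}$ and then in $w^*$ by a $j^\alpha = j_{m^\alpha,n^\alpha}(f^\alpha) \in J_{F^\alpha}$ produces a net $j^\alpha \stackrel{w^*}{\to} g^*$ with underlying $f^\alpha \in B$. The lemma's hypothesis $\sup_\alpha \theta(f^\alpha) < \infty$ is arranged by filtering $B$ through the levels $B_k = \{f \in B : \theta(f) \leqslant k\}$: construct $D^{(k)}$ from $B_k$ with precision $\ep/(3 \cdot 2^k)$ and take $D = \bigcup_k D^{(k)}$. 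Lemma \ref{lem_net_1} then yields $f^\alpha \stackrel{w^*}{\to} g^*$, so $g^* \in \cl{B}^{w^*}$ and therefore $g^*(x) \leqslant \sup_B b(x) \leqslant M(x) = g^*(x)$, giving attainment at some $b^* \in B$ with $b^*(x) = M(x)$. A final application of the $j_{m,n}(b^*)$ approximation inside $\cl{D_F}^{w^*}$ for $F = G_n(b^*)$ (using admissibility there to pick an $A_F$-element majorising $j_{m,n}(b^*)$ at $x$) produces $A$-elements whose values at $x$ tend to $M(x)$. Upgrading this asymptotic attainment to strict equality at a single element of $\cl{D}^{w^*} \cap A$ is the subtlest step and is what forces the level filtration together with a careful nesting of the $\cl{D_F}^{w^*}$'s.
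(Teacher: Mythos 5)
Your overall architecture matches the paper's: approximate each $f$ with $\theta(f)<\infty$ by the finitely supported $j_{m,n}(f)$, feed the resulting finite-support pieces into the admissibility condition, filter by the levels $\{\theta\leqslant k\}$, and use Lemma \ref{lem_net_1} to control $w^*$-limits of nets of approximants. However, the proof is not complete, and the gap is exactly at the heart of the theorem. The problem is the step you yourself flag as ``the subtlest'': having located a maximiser $d$ of $x$ on $\cl{D}^{w^*}$ (or a $b^*\in B$ with $b^*(x)=M(x)$), you produce elements of $A$ whose values at $x$ only \emph{tend to} $M(x)$, because $j_{m,n}(b^*)\to b^*$ in norm gives nothing better. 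A boundary requires exact attainment. The missing ingredient is Lemma \ref{lem_convex_combination}: since $f=\sum_{n>m}\lambda_n j_{m,n}(f)$ is a convex combination, \emph{some} $j_{m,n}(f)$ satisfies $j_{m,n}(f)(x)\geqslant f(x)$, and this single finitely supported functional lands in some $\cl{D_{k,F}}^{w^*}$, where admissibility hands you an element of $A_F$ majorising it at $x$. Without this lemma there is no route from asymptotic to exact attainment, and your sketch does not supply a substitute.

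Two further devices from the paper are absent and are needed to make even the preceding steps work. First, your level filtration $B_k=\{f\in B:\theta(f)\leqslant k\}$ is declared but not integrated: if $D=\bigcup_k D^{(k)}$, a maximising net $g_{F^\alpha}$ may range over all levels, so $\sup_\alpha\theta(f^\alpha)<\infty$ (the hypothesis of Lemma \ref{lem_net_1}) is not secured. The paper forces the maximiser into a single level by dilating the $k$th layer by $(1+2^{-k}\ep)$ and playing these dilations against the boundary point of $C$; nothing in your construction performs this role. (Relatedly, you should work with $C_k=\{f\in C:\theta(f)\leqslant k\}$, which is $w^*$-compact by Lemma \ref{lem_w^*-lsc}, rather than with subsets of $B$, which need not be $w^*$-closed.) Second, even within a fixed level, the supports $F^\alpha$ of the net need not stabilise; the paper rules out the ``escaping supports'' case by padding the sets fed to Definition \ref{defn_admissible_set} with a ball of radius $\frac12\cdot3^{-k-|F|}\ep$, so that a maximiser arising as such a limit would have a whole ball of $\cl{D}^{w^*}$ around it, contradicting maximality. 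Your $C_F=\cl{J_F}^{w^*}\cup(\ep/3)B_{W_F}$ contains a ball around $0$ (to get norming) but not around the relevant points, and it also creates a side problem: the $c^\alpha\in C_{F^\alpha}$ approximating $g_{F^\alpha}$ may lie in the ball piece rather than in $\cl{J_{F^\alpha}}^{w^*}$, in which case no $j^\alpha$ exists at all. These are not presentational issues; each one, left unrepaired, breaks the boundary verification.
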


\begin{proof} 
By rescaling $C$ if necessary, we can assume that $C \subseteq B_{X^*}$. Since $C$ is norming and a subset of $B_{X^*}$, there exists $r \in (0,1]$ such that $\sup\set{f(x)}{f \in C} \geqslant r\n{x}$ for all $x \in X$. Let $\ep \in (0,1)$. Given $k \in \N$, define
\[
C_k \;=\; \set{f \in C}{\theta(f) \leqslant k}.
\]
By Lemma \ref{lem_w^*-lsc}, this set is $w^*$-compact. Since $\theta(f)$ is finite for all $f \in B$, we know that $B\subseteq \bigcup_{k=1}^\infty C_k$. Define sets of approximating functionals
\begin{equation}\label{defn_J_k}
J_k \;=\; \set{j_{m,n}(f)}{f \in C_k,\; m,n \in \N,\;m<n\text{ and } \theta(f-h_m(f))<2^{-k-2}r\ep}.
\end{equation}
Following Lemma \ref{lem_theta_dominate} and (\ref{defn_rho}),
\begin{equation}\label{est_j_mn}
\n{f-j_{m,n}(f)} \;\leqslant\; \n{f-h_m(f)} + \n{g_{m,n}(f)} \;\leqslant\; 2\theta(f-h_m(f)) \;\to\; 0,
\end{equation}
as $m \to \infty$. Thus we see that
\begin{equation}\label{est_J_k}
C_k \;\subseteq\; \cl{J_k}^{\ndot} \;\subseteq\; \cl{J_k}^{w^*} \;\subseteq\; C_k + 2^{-k-2}r\ep B_{X^*}.
\end{equation}
Given non-empty finite $F \subseteq \Gamma$, define the $w^*$-compact set
\begin{equation}\label{defn_C_kF}
C_{k,F} \;=\; \set{j \in \cl{J_k}^{w^*}}{\supp(j) \subseteq F} \;\subseteq \; W_F.
\end{equation}
We apply Definition \ref{defn_admissible_set} to $\frac{1}{2}\cdot 3^{-k-|F|}\ep$ and $C_{k,F} + \frac{1}{2}\cdot 3^{-k-|F|}\ep B_{W_F}$ (which is norming for $V_F$) to obtain a set $D_{k,F} \subseteq W_F$ such that
\begin{equation}\label{est_D_kF}
{\ts C_{k,F} + \frac{1}{2}\cdot 3^{-k-|F|}\ep B_{W_F}} \;\subseteq\; \cl{D_{k,F}}^{w^*} \;\subseteq\; C_{k,F} + 3^{-k-|F|}\ep B_{W_F},
\end{equation}
and
\begin{equation}\label{D_kF_boundary}
\cl{D_{k,F}}^{w^*} \cap A_F \text{ is a boundary of }\cl{D_{k,F}}^{w^*}.
\end{equation}
Now define
\[
D \;=\; \bigcup\set{(1+ 2^{-k}\ep)D_{k,F}}{F \subseteq \Gamma,\;k, |F| \in \N}.
\]

First, we show that $B \subseteq \cl{D}^{w^*} \subseteq C + \ep B_{X^*}$. By (\ref{est_J_k}), (\ref{defn_C_kF}), (\ref{est_D_kF}) and the fact that $C \subseteq B_{X^*}$, 
\[
D_{k,F} \;\subseteq\; C_{k,F} + 3^{-k-|F|}\ep B_{W_F} \;\subseteq\; \cl{J_k}^{w^*} + {\ts\frac{1}{9}}\ep B_{X^*} \;\subseteq\; C + {\ts\frac{1}{4}}\ep B_{X^*} \;\subseteq\; {\ts\frac{5}{4}}B_{X^*},
\]
giving
\[
(1 + 2^{-k}\ep)D_{k,F} \;\subseteq\; C + {\ts\frac{1}{4}}\ep B_{X^*} + {\ts\frac{5}{8}}\ep B_{X^*} \;\subseteq\; C+\ep B_{X^*}.
\]
As $C+\ep B_{X^*}$ is $w^*$-closed, it follows that $\cl{D}^{w^*} \subseteq C + \ep B_{X^*}$. Now let $f \in B$ and fix $k \in \N$ large enough so that $f \in C_k$. Given $\ell \geqslant k$, it follows that
\begin{align*}
f \in C_\ell \;\subseteq\; \cl{J_\ell}^{w^*} \;&\subseteq\; \cl{\bigcup\set{C_{\ell,|F|}}{F\subseteq \Gamma,\;|F| \in \N}}^{w^*}\\
&\subseteq\; \cl{\bigcup\set{D_{\ell,|F|}}{F\subseteq \Gamma,\;|F| \in \N}}^{w^*},
\end{align*}
and thus $(1+2^{-\ell}\ep)f \in \cl{D}^{w^*}$. Consequently, $f \in \cl{D}^{w^*}$, and we have $B \subseteq \cl{D}^{w^*}$ as required.

Now we show that $\cl{D}^{w^*} \cap A$ is a boundary of $\cl{D}^{w^*}$. Fix $x \in X$, $\n{x}=1$, and set
\[
\eta \;=\; \sup\set{f(x)}{f \in \cl{D}^{w^*}}.
\]
Evidently, as $B$ is a boundary of $C$ and $B \subseteq  \cl{D}^{w^*}$, we have $\eta \geqslant r > 0$.

We need to find an element $e \in \cl{D}^{w^*} \cap A$ satisfying $e(x)=\eta$. By $w^*$-compactness, there exists $d \in \cl{D}^{w^*}$ such that $d(x)=\eta$. Our first task is to show that there exists $k \in \N$ having the property that
\begin{equation}\label{d_fixed_k}
d \in \cl{\bigcup\set{(1+ 2^{-k}\ep)D_{k,F}}{F\subseteq \Gamma,\;|F| \in \N}}^{w^*}.
\end{equation}
Observe that if there is no $k \in \N$ for which (\ref{d_fixed_k}) holds, then 
\begin{equation}\label{est_d_intersection}
d \in \bigcap_{k=1}^\infty \left(\cl{\bigcup\set{(1+ 2^{-\ell}\ep)D_{k,F}}{F\subseteq \Gamma,\; \ell, |F| \in \N,\; \ell\geqslant k}}^{w^*}\right).
\end{equation}
We show that the hypotheses preclude this possibility from taking place. We start by setting $\xi = \sup\set{f(x)}{f \in C} \geqslant r >0$. Given $k \in \N$, (\ref{est_D_kF}) and (\ref{est_d_intersection}) imply
\[
d(x) \;\leqslant\; (1+ 2^{-k}\ep)(\xi + 3^{-k-1}\ep).
\]
As this holds for all $k\in\N$, we have $d(x)\leqslant \xi$. On the other hand, as $B$ is a boundary of $C$, there exists $k \in \N$ and $b \in C_k$, such that $b(x)=\xi$. Using the definition of $J_k$ and (\ref{est_j_mn}), there exists $m \in \N$ such that $j_{m,m+1}(b) \in J_k$ and
\[
j_{m,m+1}(b)(x) \;\geqslant\; b(x) - \n{b-j_{m,n}(b)} \;>\; \xi - 2^{-k-2}r\ep. 
\]
Since $(1+2^{-k}\ep)J_k \subseteq \cl{D}^{w^*}$, and recalling that $\ep<1$, we have
\begin{align*}
\eta \;\geqslant\; (1+2^{-k}\ep)j_{m,m+1}(b)(x) \;&>\; (1+2^{-k}\ep)(\xi - 2^{-k-2}r\ep)\\
&\geqslant\; (1+2^{-k}\ep)(1-2^{-k-2}\ep)\xi\\
&=\; (1+{\ts\frac{3}{4}}\cdot 2^{-k}\ep - 2^{-2k-2}\ep^2)\xi\\
&>\; (1+{\ts\frac{5}{8}}\cdot 2^{-k}\ep)\xi \;>\; d(x),
\end{align*}
which is a contradiction. Therefore (\ref{est_d_intersection}) cannot hold.

Hence (\ref{d_fixed_k}) holds for some $k \in \N$ that we fix for the remainder of the proof. For convenience, set $d' = (1+2^{-k}\ep)^{-1}d$. There are two cases to consider. First of all, it is possible that
\begin{equation}\label{eqn_case_1}
d' \in \bigcap_{n=1}^\infty \left(\cl{\bigcup\set{D_{k,F}}{F\subseteq \Gamma,\;|F| \in \N,\; |F|\geqslant n}}^{w^*}\right).
\end{equation}
In this case, by (\ref{defn_C_kF}) and (\ref{est_D_kF}), we have
\begin{equation}\label{d_prime_in_J_k}
d' \in \bigcap_{n=1}^\infty \left(\cl{\bigcup\set{C_{k,F}}{F\subseteq \Gamma,\;|F| \in \N,\; |F|\geqslant n}}^{w^*} + 3^{-k-n}\ep B_{X^*}\right) \;\subseteq\; \cl{J_k}^{w^*}.
\end{equation}
Now we need to consider whether $d$ (and hence $d'$) has infinite support or not. If $\supp(d)$ is infinite, then according to (\ref{defn_J_k}), Lemma \ref{lem_net_1} and the $w^*$-compactness of $C_k$, we have $d' \in C_k$. Then, using (\ref{defn_J_k}) and Lemma \ref{lem_convex_combination}, there exists $j \in J_k$ such that $d'(x) \leqslant j(x)$. As $F:=\supp(j)$ is finite, using (\ref{defn_C_kF}) and (\ref{est_D_kF})
\[
j \in C_{k,F} \;\subseteq\; \cl{D_{k,F}}^{w^*},
\]
and therefore, by (\ref{D_kF_boundary}), there exists $e' \in \cl{D_{k,F}}^{w^*} \cap A_F$ such that $j(x) \leqslant e'(x)$. Define $e=(1+2^{-k}\ep)e' \in \cl{D}^{w^*} \cap A_F \subseteq \cl{D}^{w^*} \cap A$. Evidently,
\[
\eta \;=\; d(x) \;\leqslant\; e(x) \;\leqslant\; \eta,
\]
meaning that we have found what we wanted. This concludes the case where $\supp(d)$ is infinite. If $G:=\supp(d)$ is finite, then from (\ref{defn_C_kF}) and (\ref{d_prime_in_J_k}) we see straightaway that $d' \in C_{k,G}$. Then we repeat what we have just done, but without $j$ (this time, by (\ref{D_kF_boundary}), there exists $e' \in \cl{D_{k,G}}^{w^*} \cap A_G$ such that $d'(x) \leqslant e'(x)$).

We have dealt with the first case, where (\ref{eqn_case_1}) holds. If (\ref{eqn_case_1}) does not hold then there exists some $n \in \N$ such that
\[
d' \in \cl{\bigcup\set{D_{k,F}}{F\subseteq \Gamma,\;|F|= n}}^{w^*}.
\]
We take nets $F^\alpha \subseteq \Gamma$, $|F^\alpha|=n$ and $d^\alpha \in D_{k,F^\alpha}$, such that $d^\alpha \stackrel{w^*}{\to} d'$. The set of subsets of $\Gamma$ having cardinality at most $n$ is compact in the topology of pointwise convergence on $\Gamma$. Among the accumulation points of the net $(F^\alpha)$, let $G$ be one having maximal cardinality. By taking a subnet if necessary, we can assume that $F^\alpha \to G$ and $G \subseteq F^\alpha$ for all $\alpha$. That $G$ is an accumulation point the $F^\alpha$ implies $\supp(d') \subseteq G$, so $d' \in W_G$. Either $|G|=n$ or $|G|<n$. If $|G|=n$, then $G=F^\alpha$ for all $\alpha$, giving $d' \in \cl{D_{k,G}}^{w^*}$. In this case we can finish the proof as above.

In the final part of the proof, we show that it is impossible for the strict inequality $|G|<n$ to hold. For a contradiction, suppose otherwise, and let $H^\alpha = F^\alpha \setminus G$. If $H$ is an accumulation point of $(H^\alpha)$ then $G \cup H$ is an accumulation point of $(F^\alpha)$. This forces $H=\varnothing$, by maximality of the cardinality of $G$. Hence, by compactness, $H^\alpha \to \varnothing$ in the topology of pointwise convergence. According to (\ref{est_D_kF}), there exists $c^\alpha \in C_{k,F^\alpha}$ such that $\n{d^\alpha - c^\alpha} \leqslant 3^{-k-n}\ep$. By taking a further subnet if necessary, we can assume that $c^\alpha \to c'$, where $c' \in \cl{J_k}^{w^*}$. Since $H^\alpha \to \varnothing$, it follows that $\supp(c') \subseteq G$, and thus $c' \in C_{k,G}$. Moreover, by $w^*$-lower semicontinuity of the dual norm, $\n{d'-c'} \leqslant 3^{-k-n}\ep$. Therefore, by this and (\ref{est_D_kF}),
\[
d' + {\ts \frac{1}{2}}\cdot 3^{-k-n}\ep B_{W_G} \;\subseteq\;  C_{k,G} + {\ts \frac{1}{2}}\cdot 3^{-k-n+1}\ep B_{W_G} \;\subseteq\; C_{k,G} + {\ts \frac{1}{2}}\cdot 3^{-k-|G|}\ep B_{W_G} \;\subseteq\; \cl{D_{k,G}}^{w^*}.
\]
It follows that
\[
d + {\ts \frac{1}{2}}\cdot(1+2^{-k}\ep) 3^{-k-n}\ep B_{W_G} \;\subseteq\; \cl{D}^{w^*},
\]
but this contradicts our initial assumption that $d(x)=\eta$.
\end{proof}

\begin{cor}\label{cor_main_tool} Let $(A_\gamma)_{\gamma \in \Gamma}$ be an admissible family as above, where each $A_\gamma$ is a $\sigma$-$w^*$-LRC and $w^*$-$K_\sigma$ set. Let $\ndot'$ be an equivalent norm on $X$ and let $B$ be a boundary of $\ndot'$, such that $\theta(f) < \infty$ whenever $f \in B$. Then $\ndot'$ can be approximated by both $C^\infty$-smooth norms and polyhedral norms.
\end{cor}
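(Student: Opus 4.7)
The strategy is to apply Theorem \ref{thm_main_tool} to replace the given boundary $B$ by a set whose attaining elements lie in $A := \lspan(A_\gamma)_{\gamma \in \Gamma}$, build an approximating norm from the symmetric $w^*$-closed convex hull of this set, and then feed the resulting boundary into Theorem \ref{thm_tb}.

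Given $\ep > 0$, fix $M \geqslant 1$ with $\ndot \leqslant M\ndot'$ and set $\delta := \ep/M$. Let $C$ be the closed dual unit ball of $(X, \ndot')$, which is $w^*$-compact and norming. Theorem \ref{thm_main_tool} applied to $B$, $C$ and $\delta$ delivers $D \subseteq X^*$ with $B \subseteq \cl{D}^{w^*} \subseteq C + \delta B_{X^*}$ and such that $\cl{D}^{w^*} \cap A$ is a boundary of $\cl{D}^{w^*}$. Define $\tndot$ to be the norm on $X$ whose dual unit ball is $K := \cl{\conv}^{w^*}(\cl{D}^{w^*} \cup -\cl{D}^{w^*})$. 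Since $C + \delta B_{X^*}$ is symmetric, convex and $w^*$-closed, $K \subseteq C + \delta B_{X^*}$, whence
\[
\n{x}' \;\leqslant\; \tn{x} \;\leqslant\; \n{x}' + \delta\n{x} \;\leqslant\; (1 + \ep)\n{x}',
\]
the first inequality using $B \subseteq K$ together with the boundary property of $B$. The Bauer maximum principle and Milman's theorem applied to $K$, combined with the attainment supplied by Theorem \ref{thm_main_tool} on $\cl{D}^{w^*}$, then show that $E := (\cl{D}^{w^*} \cup -\cl{D}^{w^*}) \cap A$ is a boundary of $\tndot$.

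What remains is to verify that $E$ is both $\sigma$-$w^*$-LRC and $w^*$-$K_\sigma$, for then Theorem \ref{thm_tb} produces polyhedral and $C^\infty$-smooth approximations of $\tndot$, and hence (after composing approximation constants) of $\ndot'$. Since $\sigma$-$w^*$-LRC is preserved under taking subsets and $E \subseteq A$, it suffices to show $A$ enjoys this property; likewise, writing $A = \bigcup_n K_n$ with each $K_n$ $w^*$-compact gives $E = \bigcup_n (\cl{D}^{w^*} \cup -\cl{D}^{w^*}) \cap K_n$ as a countable union of $w^*$-compact sets, so $A$ being $w^*$-$K_\sigma$ is enough. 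By Theorem \ref{thm_linspan}, the problem reduces further to establishing both properties for $\bigcup_{\gamma \in \Gamma} A_\gamma$.

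The main obstacle is this last reduction, which must circumvent the possibly uncountable cardinality of $\Gamma$ by exploiting the disjoint-support structure $A_\gamma \subseteq P^*_\gamma X^*$. For $\sigma$-$w^*$-LRC, given nonzero $f \in A_\gamma$, the set $\{g \in X^* : \n{g_\gamma - f_\gamma} < \tfrac{1}{2}\n{f_\gamma}\}$ is $w^*$-open (by $w^*$-lower semicontinuity of $g \mapsto \n{g_\gamma - f_\gamma}$) and contains no element of $A_\alpha$ for $\alpha \neq \gamma$, localizing the analysis near $f$ to the single $A_\gamma$; combining this with the $\sigma$-$w^*$-LRC decompositions of the individual $A_\gamma$ (with $0$ first removed from each piece via intersection with the $w^*$-open set $X^*\setminus\{0\}$) yields countably many $w^*$-LRC pieces covering $\bigcup_\gamma A_\gamma \setminus \{0\}$, while $\{0\}$ itself is trivially $w^*$-LRC. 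For $w^*$-$K_\sigma$, decompose each $A_\gamma = \bigcup_{j,m} (K_j^\gamma \cap mB_{X^*})$ and adjoin $\{0\}$ to each piece; a short net argument (using $w^*$-continuity of each $P^*_\gamma$ to force $w^*$-limits of elements drawn from distinct $A_{\gamma^\alpha}$ to vanish) then shows that each $\bigcup_\gamma (K_j^\gamma \cap mB_{X^*}) \cup \{0\}$ is $w^*$-closed and bounded, hence $w^*$-compact, and countably many such sets cover $\bigcup_\gamma A_\gamma$.
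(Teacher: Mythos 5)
Your overall route is the same as the paper's: apply Theorem \ref{thm_main_tool} with $C=B_{(X,\ndot')^*}$, take the norm whose dual ball is the symmetric $w^*$-closed convex hull of $D$, observe that $(\cl{D}^{w^*}\cup(-\cl{D}^{w^*}))\cap A$ is a boundary of the new norm, and invoke Theorem \ref{thm_tb}. The paper simply cites Theorem \ref{thm_linspan} for the fact that $A=\lspan(A_\gamma)_{\gamma\in\Gamma}$ is $\sigma$-$w^*$-LRC and $w^*$-$K_\sigma$, leaving the passage from the individual $A_\gamma$ to the union $\bigcup_{\gamma}A_\gamma$ implicit; your decision to verify this is welcome, and your $w^*$-$K_\sigma$ argument (a bounded net drawn from infinitely many distinct $A_{\gamma^\alpha}$ has all its $w^*$-cluster points equal to $0$, by $w^*$-continuity of the $P^*_\gamma$ and property (3) of the projections) is correct. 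There is, however, one incorrect step in your $\sigma$-$w^*$-LRC verification: the set $\set{g\in X^*}{\n{g_\gamma-f_\gamma}<\frac{1}{2}\n{f_\gamma}}$ is a \emph{sublevel} set of a $w^*$-lower semicontinuous function, and such sets are not $w^*$-open in general --- when $P^*_\gamma X^*$ is infinite-dimensional (as in the $C_0(M_\alpha)$ application) this set typically has empty $w^*$-interior. Lower semicontinuity gives openness of \emph{superlevel} sets, so the repair is immediate: use $V=\set{g\in X^*}{\n{g_\gamma}>\frac{1}{2}\n{f_\gamma}}$ instead (or a set of the form $\set{g}{|g(x)|>\frac{1}{2}|f(x)|}$ with $x\in P_\gamma X$ and $f(x)\neq 0$). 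This $V$ is $w^*$-open, contains $f$ since $f=f_\gamma\neq 0$, and is disjoint from $A_\alpha$ for every $\alpha\neq\gamma$ because $g_\gamma=P^*_\gamma P^*_\alpha g=0$ for $g\in A_\alpha$. With this substitution your localization goes through and the rest of the argument is sound.
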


\begin{proof} Set $C=B_{(X,\ndot')^*}$. Given $\ep>0$, use Theorem \ref{thm_main_tool} to find a set $D \subseteq X^*$ such that
\[
B \;\subseteq\; \cl{D}^{w^*} \;\subseteq\; (1+\ep)C \qquad\text{and}\qquad \cl{D}^{w^*} \cap A \text{ is a boundary of }\cl{D}^{w^*},
\]
where $A:=\lspan(A_\gamma)_{\gamma \in \Gamma}$ is $\sigma$-$w^*$-LRC and $w^*$-$K_\sigma$ by Theorem \ref{thm_linspan}. Define $\tndot$ on $X$ by
\[
\tn{x} \;=\; \sup\set{|f(x)|}{f \in D}, \qquad\qquad x \in X.
\]
Then $\n{x}' \leqslant \tn{x} \leqslant (1+\ep)\n{x}'$ and $B_{(X,\ttrin)^*} = \cl{\conv}^{w^*}(D \cup (-D))$, and therefore
\[
(\cl{D}^{w^*} \cup (-\cl{D}^{w^*})) \cap A,
\]
is a boundary of $\tndot$. We finish the proof by appealing to Theorem \ref{thm_tb}.
\end{proof}

The last two results of the section will be of most use in the coming sections. The next corollary is immediate.

\begin{cor}\label{cor_main_tool_2} Let $(A_\gamma)_{\gamma \in \Gamma}$ be as above, and suppose that $\theta(f) < \infty$ for all $f \in X^*$. Then every equivalent norm on $X$ can be approximated by both $C^\infty$-smooth norms and polyhedral norms.
\end{cor}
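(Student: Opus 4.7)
The plan is to reduce this immediately to Corollary \ref{cor_main_tool}, since that result already handles a fixed equivalent norm $\ndot'$ provided one can locate a boundary $B$ of $\ndot'$ consisting entirely of functionals on which $\theta$ is finite. The present hypothesis is stronger: $\theta(f)<\infty$ for every $f\in X^*$, so the finiteness requirement on the boundary will be automatic regardless of which norm we start from and which boundary we pick.

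Concretely, given an arbitrary equivalent norm $\ndot'$ on $X$, I would first choose any boundary $B$ of $\ndot'$. A convenient choice is $B = \ext(B_{(X,\ndot')^*})$, which is a boundary by the Hahn--Banach and Krein--Milman Theorems as noted in the preliminaries of Section \ref{sect_main_tools}; alternatively one may simply take $B=B_{(X,\ndot')^*}$ itself, which is trivially a boundary of itself via Hahn--Banach. In either case, since $\theta$ is finite everywhere on $X^*$ by hypothesis, $\theta(f)<\infty$ holds for every $f\in B$.

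At this point the hypotheses of Corollary \ref{cor_main_tool} are met: the admissible family $(A_\gamma)_{\gamma\in\Gamma}$ is given, each $A_\gamma$ is $\sigma$-$w^*$-LRC and $w^*$-$K_\sigma$ by assumption, and $B$ is a boundary of $\ndot'$ on which $\theta$ is finite. Applying that corollary yields the required approximation of $\ndot'$ by both $C^\infty$-smooth norms and polyhedral norms. Since $\ndot'$ was arbitrary, this proves the statement.

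There is no real obstacle here; the content of the corollary is entirely the observation that the blanket hypothesis $\theta<\infty$ on all of $X^*$ renders the per-norm hypothesis of Corollary \ref{cor_main_tool} trivially true, making it possible to quote that corollary uniformly for every equivalent norm on $X$. The heavy lifting — the construction of the set $D$ via Theorem \ref{thm_main_tool} and its conversion into an approximating norm using Theorem \ref{thm_tb} and Theorem \ref{thm_linspan} — has already been carried out in Corollary \ref{cor_main_tool}.
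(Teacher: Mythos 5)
Your proposal is correct and matches the paper exactly: the paper simply declares this corollary ``immediate'' from Corollary \ref{cor_main_tool}, and your argument---choose any boundary $B$ of the given norm $\ndot'$, observe that the blanket hypothesis $\theta(f)<\infty$ for all $f\in X^*$ makes the boundary condition of Corollary \ref{cor_main_tool} automatic, and apply that corollary---is precisely the intended reasoning.
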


We conclude the section by making one further reduction. Suppose that $P_\gamma X$ is 1-dimensional for all $\gamma \in \Gamma$, or equivalently, that we have a shrinking bounded M-basis $(e_\gamma,e^*_\gamma)_{\gamma \in \Gamma}$. Now we let $A_\gamma = P^*_\gamma X^* = \lspan(e^*_\gamma)$, which is always a $\sigma$-$w^*$-LRC and $w^*$-$K_\sigma$ set. Moreover, (\ref{defn_rho}) simplifies to
\begin{equation}\label{defn_rho_reduced}
\rho(F) \;=\; \max\set{\n{\sum_{\gamma \in F}a_\gamma e_\gamma^*}}{a_\gamma \in \R \text{ and }|a_\gamma|\|e^*_\gamma\|\leqslant 1 \text{ whenever }\gamma \in F}.
\end{equation}

\begin{cor}\label{cor_main_tool_3} Let $(e_\gamma,e^*_\gamma)_{\gamma \in \Gamma}$ be a shrinking bounded M-basis of $X$, and suppose that $\theta(f) < \infty$ for all $f \in X^*$. Then every equivalent norm on $X$ can be approximated by both $C^\infty$-smooth norms and polyhedral norms.
\end{cor}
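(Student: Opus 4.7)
The statement is essentially a specialisation of Corollary \ref{cor_main_tool_2}, so my plan is simply to verify that the natural choice $A_\gamma := P^*_\gamma X^* = \lspan(e^*_\gamma)$ satisfies the two hypotheses required to invoke it, namely admissibility of the family $(A_\gamma)_{\gamma \in \Gamma}$, and the property that each individual $A_\gamma$ is $\sigma$-$w^*$-LRC and $w^*$-$K_\sigma$.

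First I would check admissibility. Since $\dim P_\gamma X = 1$, we have $A_F = \lspan(A_\gamma)_{\gamma \in F} = \lspan(P^*_\gamma X^*)_{\gamma \in F} = W_F$ for every finite $F \subseteq \Gamma$. Consequently, given any $\ep > 0$, any finite $F \subseteq \Gamma$, and any $w^*$-compact norming $C \subseteq W_F$, one may take $D = C$: then $\cl{D}^{w^*} = C$ lies trivially between $C$ and $C + \ep B_{W_F}$, and $\cl{D}^{w^*} \cap A_F = C \cap W_F = C$ is tautologically a boundary of $C$ (any set is a boundary of itself). This is exactly what the paper alludes to when asserting that the family $(P^*_\gamma X^*)_{\gamma \in \Gamma}$ is admissible.

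Next I would verify the $\sigma$-$w^*$-LRC and $w^*$-$K_\sigma$ conditions. Writing
\[
A_\gamma \;=\; \bigcup_{n \in \N} n B_{A_\gamma},
\]
each set $n B_{A_\gamma}$ is norm-compact (and therefore $w^*$-compact) because $A_\gamma$ is one-dimensional. Moreover, each $n B_{A_\gamma}$ is trivially $w^*$-LRC: take $U = X^*$ in Definition \ref{defn_lrc} and note that $n B_{A_\gamma} \cap U = n B_{A_\gamma}$ is itself relatively norm-compact. Thus $A_\gamma$ is simultaneously $w^*$-$K_\sigma$ and $\sigma$-$w^*$-LRC.

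With both conditions in hand, and the standing assumption $\theta(f) < \infty$ for all $f \in X^*$, Corollary \ref{cor_main_tool_2} applies directly and yields the conclusion. I do not foresee any genuine obstacle: the technical content is entirely packaged into the framework of Section \ref{sect_approx_frame} and into Corollary \ref{cor_main_tool_2}, so the role of this last corollary is to record the most commonly used instantiation of that framework, the case of a shrinking bounded M-basis, where the admissibility is automatic and the simplified expression \eqref{defn_rho_reduced} for $\rho(F)$ becomes available for later applications.
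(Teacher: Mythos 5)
Your proposal is correct and follows exactly the route the paper intends: take $A_\gamma = P^*_\gamma X^* = \lspan(e^*_\gamma)$, note that admissibility is automatic (with $D=C$, using that the $w^*$-compact set $C$ attains its suprema and is thus a boundary of itself) and that one-dimensional subspaces are trivially $\sigma$-$w^*$-LRC and $w^*$-$K_\sigma$, then invoke Corollary \ref{cor_main_tool_2}. The only minor imprecision is the parenthetical claim that \emph{any} set is a boundary of itself — this requires the supremum to be attained, which holds here precisely because $C$ is $w^*$-compact.
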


\section{Spaces having a symmetric basis}\label{sect_symmetric}

The main purpose of this section is to develop tools to allow us to apply Corollary \ref{cor_main_tool_3} to certain spaces having symmetric bases. These tools focus on making it easier to establish whether or not $\theta(f)<\infty$ for all $f \in X^*$. Let $X$ be a Banach space and $\Gamma$ a set. Recall that a family of vectors $(e_\gamma)_{\gamma \in \Gamma}$ is a \emph{symmetric basis} of $X$ if, first, given $x \in X$, there is a unique family of scalars $(a_\gamma)_{\gamma \in \Gamma}$, such that $x = \sum_{\gamma \in \Gamma} a_\gamma e_\gamma$ (where the convergence is necessarily unconditional), and second, given any permutation $\pi$ of $\Gamma$, the sum $\sum_{\gamma \in \Gamma} b_\gamma e_{\pi(\gamma)}$ converges whenever $\sum_{\gamma \in \Gamma}b_\gamma e_\gamma$ converges (again, unconditionally). By the uniform boundedness principle, with such a basis in hand, we have
\[
K\;:=\; \sup_{\theta, \pi} \n{T_{\theta, \pi}} \;<\; \infty,
\]
where, given a permutation $\pi$ of $\Gamma$ and $\theta=(\theta_\gamma)_{\gamma \in \Gamma}$ a choice of signs, the operator $T_{\theta,\pi}$ is defined on $X$ by
\[
T_{\theta,\pi}\bigg( \sum_{\gamma \in \Gamma} a_\gamma e_\gamma\bigg) \;=\; \sum_{\gamma \in \Gamma} a_\gamma\theta_\gamma e_{\pi(\gamma)}.
\]
The number $K$ is known as the \emph{symmetric basis constant} of $(e_\gamma)_{\gamma \in \Gamma}$. If we define a new norm
\[
\pn{x}{s} \;=\; \sup_{\theta,\pi} \n{T_{\theta,\pi} x},
\]
then $\n{x} \leqslant \pn{x}{s} \leqslant K\n{x}$ for all $x \in X$, and the symmetric basis constant of $(e_\gamma)_{\gamma \in \Gamma}$ with respect to $\pndot{s}$ is equal to $1$ (see, for example \cite[Section 3.a]{lt:77}). Evidently, $\ndot$ and $\pndot{s}$ are equal if and only if $K=1$.

Hereafter, suppose that the symmetric basis $(e_\gamma)_{\gamma \in \Gamma}$ is shrinking. Let us fix a sequence $(\gamma_n)$ of distinct points in $\Gamma$. Following \cite[Proposition 3.a.6]{lt:77}, given $n \in \N$, define
\[
\lambda(n) \;=\; \pn{\sum_{k=1}^n e_{\gamma_k}}{s} \qquad\text{and}\qquad\mu(n) \;=\; \pn{\sum_{k=1}^n e^*_{\gamma_k}}{s},
\]
and set $\lambda(0)=\mu(0)=0$. Since the basis is $1$-symmetric with respect to $\pndot{s}$, the definitions of $\lambda(n)$ and $\mu(n)$ are independent of the choice of initial sequence $(\gamma_n)$. It is also important to note that, by \cite[Proposition 3.a.6]{lt:77}, we have $\lambda(n)\mu(n)=n$ for all $n$. By scaling the basis vectors by the same amount, we can assume that $\pn{e_\gamma}{s}=\|e^*_\gamma\|_s=1$ for all $\gamma \in \Gamma$.

The functions $\lambda$ and $\mu$ will help to simplify the task of establishing whether or not $\theta(f)$ is finite.

\begin{prop}\label{prop_rho_and_mu} Let $X$ have a shrinking symmetric basis as above. Then 
\[
\theta(f) \;<\; \infty \qquad\text{if and only if}\qquad \sum_{k=1}^\infty q_k(f)\mu(|G_k(f)|) \;<\; \infty.
\]
\end{prop}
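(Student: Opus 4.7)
The plan is to reduce the claim to a two-sided comparison $\rho(F) \asymp \mu(|F|)$, valid for every finite $F \subseteq \Gamma$, with constants depending only on the symmetric basis constant $K$. Once this comparison is in hand, summing termwise against the weights $q_k(f)$ gives the biconditional immediately, since both series become proportional up to a multiplicative factor of $K^{\pm 1}$.

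First I would record the elementary consequences of $\ndot \leqslant \pndot{s} \leqslant K\ndot$, namely $\n{e^*_\gamma} \in [1/K,1]$ for every $\gamma \in \Gamma$. For the lower bound on $\rho(F)$, fix finite $F$ with $|F|=n$ and observe that the coefficients $a_\gamma = 1$ are admissible in (\ref{defn_rho_reduced}) because $|a_\gamma|\n{e^*_\gamma}\leqslant 1$. Using $\pndot{s}\leqslant K\ndot$ and the $1$-symmetry of the basis with respect to $\pndot{s}$, this yields
\[
\rho(F) \;\geqslant\; \n{\sum_{\gamma \in F} e^*_\gamma} \;\geqslant\; K^{-1} \pn{\sum_{\gamma \in F} e^*_\gamma}{s} \;=\; \mu(n)/K.
\]

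For the matching upper bound, take any admissible $(a_\gamma)_{\gamma \in F}$ in (\ref{defn_rho_reduced}); since $\n{e^*_\gamma} \geqslant 1/K$ we have $|a_\gamma|\leqslant K$ for each $\gamma$. Because $(e^*_\gamma)$ is $1$-symmetric (hence $1$-unconditional) with respect to $\pndot{s}$, the map $(c_\gamma) \mapsto \pn{\sum c_\gamma e^*_\gamma}{s}$ is monotone in the moduli of its arguments, so
\[
\pn{\sum_{\gamma \in F} a_\gamma e^*_\gamma}{s} \;\leqslant\; \pn{\sum_{\gamma \in F} K e^*_\gamma}{s} \;=\; K\mu(n),
\]
and using $\ndot \leqslant \pndot{s}$ once more gives $\rho(F)\leqslant K\mu(n)$. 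Inserting the resulting inequalities $\mu(|G_k(f)|)/K \leqslant \rho(G_k(f)) \leqslant K\mu(|G_k(f)|)$ into the definition of $\theta(f)$ yields
\[
K^{-1}\sum_{k=1}^\infty q_k(f)\mu(|G_k(f)|) \;\leqslant\; \theta(f) \;\leqslant\; K\sum_{k=1}^\infty q_k(f)\mu(|G_k(f)|),
\]
from which the proposition is immediate.

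There is no substantial obstacle: the only moving part is the standard lattice monotonicity used in the upper bound, which is a routine consequence of $1$-symmetry (obtained by averaging over sign changes and permutations under $\pndot{s}$). The argument is quantitative but entirely elementary once the normalisation $\pn{e_\gamma}{s}=\pn{e^*_\gamma}{s}=1$ is invoked.
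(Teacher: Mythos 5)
Your overall strategy is exactly the one the paper uses: establish the two-sided comparison $K^{-1}\mu(|F|)\leqslant\rho(F)\leqslant K\mu(|F|)$ for finite $F$ and then sum termwise against the non-negative weights $q_k(f)$. However, you have the direction of the dual-norm comparison backwards, and this invalidates the steps as written. From $\n{x}\leqslant\pn{x}{s}\leqslant K\n{x}$ on $X$ one gets, on $X^*$, that $\pn{f}{s}\leqslant\n{f}\leqslant K\pn{f}{s}$ (the unit ball of $\pndot{s}$ is the \emph{smaller} one, so its dual norm is the smaller one). Since the normalisation is $\pn{e^*_\gamma}{s}=1$, this gives $\n{e^*_\gamma}\in[1,K]$, not $[1/K,1]$ as you claim. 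Consequently the choice $a_\gamma=1$ in your lower bound need not satisfy the admissibility constraint $|a_\gamma|\n{e^*_\gamma}\leqslant 1$ when $K>1$, so the inequality $\rho(F)\geqslant\n{\sum_{\gamma\in F}e^*_\gamma}$ is unjustified. The repair is the paper's choice $a_\gamma=K^{-1}$, which is admissible because $\n{e^*_\gamma}\leqslant K$, and still yields $\rho(F)\geqslant K^{-1}\n{\sum_{\gamma\in F}e^*_\gamma}\geqslant K^{-1}\pn{\sum_{\gamma\in F}e^*_\gamma}{s}=K^{-1}\mu(|F|)$.

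The upper bound contains two errors that happen to cancel: admissibility actually forces $|a_\gamma|\leqslant|a_\gamma|\n{e^*_\gamma}\leqslant 1$ (not merely $|a_\gamma|\leqslant K$), while the passage from $\pndot{s}$ back to $\ndot$ on the dual costs a factor $K$ (you invoke $\ndot\leqslant\pndot{s}$, which is false on $X^*$). The correct chain is $\n{\sum_{\gamma\in F}a_\gamma e^*_\gamma}\leqslant K\pn{\sum_{\gamma\in F}a_\gamma e^*_\gamma}{s}\leqslant K\pn{\sum_{\gamma\in F}e^*_\gamma}{s}=K\mu(|F|)$, using $1$-unconditionality of $(e^*_\gamma)$ with respect to $\pndot{s}$ in the middle step, which is the only part of your argument that survives unchanged. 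The final two-sided estimate, and hence the proposition, are correct, but you should rework the intermediate inequalities so that each one is actually true.
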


\begin{proof} Let $K$ be the symmetric basis constant. Let $F\subseteq \Gamma$ be finite. The result will follow immediately if we can prove that $K^{-1}\mu(|F|) \leqslant \rho(F) \leqslant K\mu(|F|)$. First, the simplification of (\ref{defn_rho}) to (\ref{defn_rho_reduced}) applies here. Next, if we set $a_\gamma=K^{-1}$, $\gamma \in F$, then $|a_\gamma|\|e^*_\gamma\| \leqslant K^{-1}K\|e^*_\gamma\|_s=1$, and thus
\[
K^{-1}\mu(|F|) \;=\; K^{-1}\pn{\sum_{\gamma \in F}e_\gamma^*}{s} \;\leqslant\; \n{\sum_{\gamma \in F}a_\gamma e_\gamma^*} \;\leqslant\; \rho(F).
\]
On the other hand, if $|a_\gamma|\|e^*_\gamma\| \leqslant 1$, then $|a_\gamma| \leqslant |a_\gamma|\|e^*_\gamma\|_s \leqslant 1$. Therefore,
\[
\n{\sum_{\gamma \in F}a_\gamma e_\gamma^*} \;\leqslant\; K\pn{\sum_{\gamma \in F}a_\gamma e_\gamma^*}{s} \;\leqslant\; K \pn{\sum_{\gamma \in F} e_\gamma^*}{s} \;=\; K\mu(|F|). \qedhere
\]
\end{proof}

Recall the comments about subspaces of $c_0(\Gamma)$ following Theorem \ref{c0_approx}. The following result uses the function $\lambda$ to provide a straightforward test that will ensure that the examples we present are new.

\begin{prop}\label{prop_not_subspace_c_0} Let $X$ have a symmetric basis $(e_\gamma)_{\gamma\in\Gamma}$ as above. The following statements are equivalent. 
\begin{enumerate}
\item The sequence $(\lambda(n))_{n=1}^\infty$ is bounded.
\item The spaces $X$ and $c_0(\Gamma)$ are isomorphic.
\item There exists a set $\Delta$ such that $X$ is isomorphic to a subspace of $c_0(\Delta)$.
\end{enumerate}
\end{prop}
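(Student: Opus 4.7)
The plan is to show (1) $\Rightarrow$ (2) $\Rightarrow$ (3) $\Rightarrow$ (1), where the middle implication is trivial (take $\Delta=\Gamma$) and the case of finite $\Gamma$ is trivial, so I assume $\Gamma$ is infinite throughout. For (1) $\Rightarrow$ (2) I would pass to the equivalent symmetric norm $\pndot{s}$, with respect to which $(e_\gamma)$ is $1$-symmetric (hence $1$-unconditional) and $\pn{e_\gamma}{s}=\pn{e^*_\gamma}{s}=1$. Assuming $\lambda(n)\leqslant L$ for all $n$, $1$-unconditionality gives, for any finitely supported $x=\sum_{\gamma \in F}a_\gamma e_\gamma$,
\[
\pn{x}{s}\;\leqslant\; \left(\max_{\gamma \in F}|a_\gamma|\right)\pn{\sum_{\gamma \in F}e_\gamma}{s}\;\leqslant\; L\,\max_{\gamma \in F}|a_\gamma|,
\]
while $\max_\gamma|a_\gamma|=\max_\gamma|e^*_\gamma(x)|\leqslant \pn{x}{s}$ is automatic. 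Combined with the unconditional convergence of the basis expansion of every $x\in X$, this extends to an isomorphism $c_0(\Gamma)\to X$ sending $(a_\gamma)\mapsto \sum_\gamma a_\gamma e_\gamma$, proving (2).

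For (3) $\Rightarrow$ (1), let $T:X\to c_0(\Delta)$ be an isomorphic embedding and fix any sequence $(\gamma_n)$ of distinct elements of $\Gamma$. The shrinking assumption, via Lemma \ref{lem_ran(f)_structure} applied to each $f\in X^*$, forces $e_{\gamma_n}\to 0$ weakly, so $(Te_{\gamma_n})$ is a bounded, bounded-below, weakly null sequence in $c_0(\Delta)$. Since each $Te_{\gamma_n}$ has countable support, everything takes place in $c_0(\Delta')$ for some countable $\Delta'\subseteq \Delta$, and a standard Bessaga--Pelczynski gliding-hump argument produces a subsequence $(Te_{\gamma_{n_k}})$ close to a normalised disjointly supported sequence, hence equivalent to the canonical unit vector basis of $c_0$. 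Pulling back through $T^{-1}$ yields a constant $M$ such that
\[
\pn{\sum_{k=1}^N a_k e_{\gamma_{n_k}}}{s}\;\leqslant\; M\max_{k\leqslant N}|a_k|
\]
for every $N$ and all scalars $a_k$. Because $\pndot{s}$ is $1$-symmetric, the very same estimate holds after replacing $(\gamma_{n_k})$ by any sequence of distinct indices in $\Gamma$; specialising to $a_k=1$ gives $\lambda(n)\leqslant M$ for all $n$.

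The main obstacle is (3) $\Rightarrow$ (1); the other implications are routine. The two non-trivial ingredients are the gliding-hump extraction of a $c_0$-equivalent subsequence of $(Te_{\gamma_n})$ inside $c_0(\Delta)$ (which is fine because the relevant supports are countable) and, crucially, the subsequent upgrade from an estimate on a subsequence to the same estimate on \emph{every} sequence of distinct indices via the $1$-symmetry of $\pndot{s}$.
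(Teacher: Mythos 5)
Your proposal is correct and follows essentially the same route as the paper: the $(1)\Rightarrow(2)$ implication via the unconditionality estimate $\n{\sum a_\gamma e_\gamma}\leqslant L\max|a_\gamma|\,\lambda(n)$, and $(3)\Rightarrow(1)$ by observing that $(Te_{\gamma_n})$ is weakly null and bounded below, extracting a subsequence equivalent to the $c_0$ basis by the Bessaga--Pe\l czy\'nski argument, and transferring the resulting estimate to all of $\Gamma$ by symmetry. Your explicit justification of weak nullity via Lemma \ref{lem_ran(f)_structure} and of the final symmetry step merely spells out details the paper leaves implicit.
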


\begin{proof}The implication $(1) \Rightarrow (2)$ follows immediately from the fact that, given a normalized basis $(e_\gamma)_{\gamma \in \Gamma}$ of a Banach space having unconditional basis constant $L$, we have
\[
L^{-1}\max_{\gamma \in F} |a_\gamma| \;\leqslant\; \n{\sum_{\gamma \in F} a_\gamma e_\gamma} \;\leqslant\; L\max_{\gamma \in F} |a_\gamma|\n{\sum_{\gamma \in F} e_\gamma}.
\]
for every finite set $F \subseteq \Gamma$, and real numbers $a_\gamma$, $\gamma \in F$.

The implication $(2) \Rightarrow (3)$ is trivial. Finally, consider $(3) \Rightarrow (1)$. Assume that the bounded linear map $\map{T}{X}{c_0(\Delta)}$ is bounded below. Then $(Te_{\gamma_k})_{k=1}^\infty$ converges weakly to $0$ and satisfies $\inf (\pn{Te_{\gamma_k}}{\infty})_k > 0$, so it admits a subsequence, again labelled $(Te_{\gamma_k})$, that is equivalent to a block basic sequence of $c_0$ \cite[Corollary 4.27]{fhhspz:11}. Consequently, $(Te_{\gamma_k})$, and hence $(e_{\gamma_k})$, is equivalent to the usual basis of $c_0$ \cite[Proposition 4.45]{fhhspz:11}. The boundedness of $(\lambda(n))_{n=1}^\infty$ follows.
\end{proof}

We remark that if that $\Gamma$ is uncountable, then the implication $(3) \Rightarrow (2)$ above follows from \cite[Main Theorem]{ht:93}. There, it is shown that if $(u_\alpha)_{\alpha \in A}$ is an uncountable symmetric basic set in an F-space $Y$ having an F-norm and symmetric basis $(v_\beta)_{\beta \in B}$, then there exists a decreasing sequence of non-negative scalars $a_i$, $i \in \N$, and an injection $(\alpha,i) \mapsto \beta_{\alpha,i}$ from $A \times \N$ into $B$, such that $(u_\alpha)_{\alpha \in A}$ is equivalent to $(u'_\alpha)_{\alpha \in A}$, where $u'_\alpha:=\sum_{i=1}^\infty a_i v_{\beta_{\alpha,i}}$ converges in norm for all $\alpha \in A$. Therefore, if $Y=c_0(B)$ and $(v_\beta)_{\beta \in B}$ is its standard basis, then for every finite set $F \subseteq A$ and scalars $c_\alpha$, $\alpha \in F$, we have
\[
\pn{\sum_{\alpha \in F} c_\alpha u'_\alpha}{\infty} = \pn{\sum_{\alpha \in F}c_\alpha \sum_{i=1}^\infty a_i v_{\beta_{\alpha,i}}}{\infty} = \pn{\sum_{(\alpha,i) \in F \times \N} c_\alpha a_i v_{\beta_{\alpha,i}}}{\infty} \leqslant a_1 \max\set{|c_\alpha|}{\alpha \in F}.
\]

The next theorem is the main result of the section.

\begin{thm}\label{theta_equivalences} Let $X$ have a shrinking symmetric basis $(e_\gamma)_{\gamma \in \Gamma}$. The following statements are equivalent.
\begin{enumerate}
\item\label{finite_theta} $\theta(f)<\infty$ for all $f \in X^*$;
\item\label{eqn2} the quantity
\[
\sup\set{\n{\sum_{k=1}^n (\mu(k)-\mu(k-1))e_{\gamma_k}}}{n \in \N},
\]
is finite;
\item\label{bidual} the series
\[
\sum_{k=1}^\infty (\mu(k)-\mu(k-1))e_{\gamma_k},
\]
converges in $(X^{**},w^{**})$.
\end{enumerate}
\end{thm}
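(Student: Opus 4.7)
The equivalence $(2)\Leftrightarrow(3)$ is a routine application of weak-$*$ completeness. Writing $s_n=\sum_{k=1}^n(\mu(k)-\mu(k-1))e_{\gamma_k}$, any $w^*$-convergent sequence in $X^{**}$ is bounded, so $(3)\Rightarrow(2)$; conversely, a bounded sequence of elements of $X$ (viewed inside $X^{**}$) that converges pointwise on a norm-dense subspace of $X^*$ is weak-$*$-Cauchy via a $3\epsilon$ argument, and its pointwise limit defines an element of $X^{**}$ to which $(s_n)$ converges in the weak-$*$ topology. Pointwise convergence on $\lspan(e^*_\gamma)_{\gamma\in\Gamma}$ is immediate since $s_n(e^*_\gamma)$ is eventually constant, and this subspace is dense in $X^*$ by the shrinking hypothesis.

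The core calculation behind $(1)\Leftrightarrow(2)$ is an Abel identity: for $f=\sum_k c_k e^*_{\gamma_k}$ with $c_k\geqslant 0$ non-increasing, and setting $a_k=\mu(k)-\mu(k-1)\geqslant 0$,
\[
\sum_{k=1}^{N-1}\mu(k)(c_k-c_{k+1}) + c_N\mu(N) \;=\; \sum_{k=1}^{N} c_k a_k.
\]
Proposition~\ref{prop_rho_and_mu} identifies $\theta(f)<\infty$ with convergence of the first sum on the left as $N\to\infty$; because its tail telescopically dominates $c_N\mu(N)$, one also gets $c_N\mu(N)\to 0$, whence $\theta(f)=\sum_k c_k a_k$. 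By the symmetry of $(e_\gamma)$ every $f\in X^*$ admits a non-increasing rearrangement $f^*=\sum c_k e^*_{\gamma_k}$ (with $(c_k)$ the decreasing rearrangement of $(\|f_\gamma\|)_{\gamma\in\Gamma}$) with $\theta(f^*)=\theta(f)$ and $\|f^*\|\leqslant K\|f\|$, where $K$ is the symmetric basis constant. The implication $(2)\Rightarrow(1)$ is then immediate: if $M=\sup_n\|s_n\|<\infty$ then $\sum_{k=1}^n a_k c_k=f^*(s_n)\leqslant\|f^*\|M\leqslant KM\|f\|$, so $\sum_k c_k a_k<\infty$, hence $\theta(f)=\theta(f^*)<\infty$.

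For the hard direction $(1)\Rightarrow(2)$ I propose to promote (1) to a global estimate $\theta\leqslant C\|\cdot\|$ on $X^*$ and then read off (2) by duality. The sublevel sets $\{f:\theta(f)\leqslant n\}$ are $w^*$-closed by Lemma~\ref{lem_w^*-lsc}, hence norm-closed, and under (1) they cover $X^*$; Baire's theorem yields a local bound $\theta\leqslant n$ on some ball $f_0+rB_{X^*}$. To promote this to a global bound I need the (quasi-)subadditivity of $\theta$: using the Abel representation $\theta(f)=\sum c_k a_k$ together with the Hardy--Littlewood--P\'olya partial-sum rearrangement inequality $\sum_{k\leqslant N}c^{f+g}_k\leqslant\sum_{k\leqslant N}c^f_k+\sum_{k\leqslant N}c^g_k$, a further Abel summation yields $\theta(f+g)\leqslant C_0(\theta(f)+\theta(g))$, with $C_0$ absorbing the gap between $\mu$ and its concave majorant (for symmetric bases, $\mu$ is equivalent to a concave function, so $a_k$ is essentially non-increasing). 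Absolute homogeneity then gives $\theta\leqslant C\|\cdot\|$. Finally, by symmetry and the rearrangement inequality, the duality $\|s_n\|=\sup_{\|f\|\leqslant 1}|f(s_n)|$ reduces to a supremum over non-increasing $f^*=\sum c_k e^*_{\gamma_k}$ with $\|f^*\|\leqslant K$, for which $f^*(s_n)=\sum_{k=1}^n a_k c_k\leqslant\theta(f^*)\leqslant CK$, giving $\|s_n\|\leqslant CK$ uniformly. The most delicate step will be the quasi-subadditivity of $\theta$, which has to be executed carefully when $(a_k)$ is not itself monotone.
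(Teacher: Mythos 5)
Your argument is correct in outline, but for the hard implication $(1)\Rightarrow(2)$ it takes a genuinely different and considerably heavier route than the paper. The two proofs share the same first half: Proposition~\ref{prop_rho_and_mu} plus Abel summation identify $\theta(f)<\infty$ with $\sum_k c_k\,(\mu(k)-\mu(k-1))<\infty$, where $(c_k)$ is the decreasing rearrangement of $(\n{f_\gamma})_{\gamma}$ (one small correction: since $\rho(F)$ is only comparable to $\mu(|F|)$ within the symmetric basis constant $K$, your ``$\theta(f)=\sum_k c_ka_k$'' should be a two-sided estimate, which is all that is needed). Your $(2)\Rightarrow(1)$ and $(2)\Leftrightarrow(3)$ agree with the paper's. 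The divergence is in how the pointwise statement ``$\sum_k c_k(f)a_k<\infty$ for every $f\in X^*$'' is upgraded to $\sup_n\n{s_n}<\infty$ for $s_n:=\sum_{k=1}^n(\mu(k)-\mu(k-1))e_{\gamma_k}$. The paper observes that the pointwise statement says exactly that $\{s_n\}\subseteq X\subseteq X^{**}$ is weakly bounded as a family of functionals on the Banach space $X^*$, and invokes the uniform boundedness principle --- one line. You instead run Baire category on the sublevel sets of $\theta$ in $X^*$ and must then prove a quasi-subadditivity estimate $\theta(f+g)\leqslant C_0(\theta(f)+\theta(g))$ to globalize the local bound. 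That estimate is true, and your plan for it works: $\mu$ is quasi-concave ($\mu$ non-decreasing and $\mu(n)/n=1/\lambda(n)$ non-increasing), hence $2$-equivalent to its least concave majorant, whose increments are non-increasing, after which Hardy--Littlewood--P\'olya and Abel summation give $C_0=2$. But this is precisely the delicate lemma that the UBP shortcut renders unnecessary: in effect you are re-deriving uniform boundedness by hand (Baire category being how UBP is proved anyway) in a situation where the abstract theorem applies directly. One place where you are not at a disadvantage: both proofs must at some stage compare $\sum_{k\leqslant n}a_ku_k$ with the same sum over the decreasing rearrangement of $(u_k)$, even though the weights $a_k=\mu(k)-\mu(k-1)$ need not be monotone; the paper compresses this into ``by appealing to the symmetry of the basis'', and it is settled by the same quasi-concavity of $\mu$, e.g.\ via the inequality $\sum_{i\in A}(\mu(i)-\mu(i-1))\leqslant 2\mu(|A|)$ for every finite $A\subseteq\N$. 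So: no gap, but the paper's route buys you the globalization for free, while yours costs an extra (provable but nontrivial) structural lemma about $\theta$.
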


\begin{proof} The equivalence of (\ref{eqn2}) and (\ref{bidual}) follows because the basis is shrinking \cite[Proposition 1.b.2]{lt:77}. Now we prove the equivalence of (\ref{finite_theta}) and (\ref{eqn2}). Let $K$ be the symmetric basis constant of $(e_\gamma)_{\gamma \in\Gamma}$. As above, we assume that the basis is normalized with respect to $\pndot{s}$. Let $f \in X^*$. By Proposition \ref{prop_rho_and_mu}, 
\begin{equation}\label{eqn-1}
\theta(f) \;<\; \infty \qquad\text{if and only if}\qquad \sum_{k=1}^\infty q_k(f)\mu(|G_k(f)|) \;<\; \infty.
\end{equation}
By Lemma \ref{lem_ran(f)_structure}, there exists a sequence $(\gamma'_i)_{i=1}^\infty$ of distinct points in $\Gamma$, and integers $1=i_1 < i_2 < i_3 < \dots$, such that 
\[
G_k(f) \;=\; \set{\gamma'_i}{1 \leqslant i < i_{k+1}},
\]
whenever $k \in \N$ and $p_k(f)>0$. Define $a_i = \|P^*_{\gamma'_i} f\| = |f(e_{\gamma'_i})|\|e_{\gamma'_i}^* \|$, $i \in \N$. It follows that $p_k(f)=a_i$ whenever $i_k \leqslant i < i_{k+1}$. Now
\begin{align}\label{eqn0}
\sum_{i=1}^\infty (a_i-a_{i+1})\mu(i) \;&=\; \sum_{k=1}^\infty \sum_{i=i_k}^{i_{k+1}-1} (a_i-a_{i+1})\mu(i) \nonumber\\
&=\; \sum_{k=1}^\infty (a_{i_k}-a_{i_k+1})\mu(i_k) \;=\; \sum_{k=1}^\infty q_k(f)\mu(|G_k(f)|).
\end{align}
Observe that
\begin{equation}\label{eqn3}
\sum_{i=1}^n (a_i-a_{i+1})\mu(i) \;=\; \sum_{i=1}^n a_i(\mu(i)-\mu(i-1)) - a_{n+1}\mu(n).
\end{equation}
Since $\n{e_\gamma^*} \leqslant K\pn{e_\gamma^*}{s}=K$ for all $\gamma \in \Gamma$, and the basis is 1-symmetric with respect to $\pndot{s}$, we can see that
\[
a_{n+1}\mu(n) \;=\; \pn{\sum_{i=1}^n a_{n+1}e^*_{\gamma'_i}}{s} \;\leqslant\; \pn{\sum_{i=1}^n a_ie^*_{\gamma'_i}}{s} \;\leqslant\; K\pn{\sum_{i=1}^n \frac{a_i}{\|e^*_{\gamma'_i}\|}e^*_{\gamma'_i}}{s} \;\leqslant\; K\pn{f}{s},
\]
for all $n$. Therefore, by (\ref{eqn-1}) and (\ref{eqn0}), and using the fact that all the terms in the two partial sums in equation (\ref{eqn3}) are non-negative, $\theta(f)<\infty$ if and only if
\begin{equation}\label{eqn4}
\sum_{k=1}^\infty a_k(\mu(k)-\mu(k-1)) \;<\; \infty.
\end{equation}
By appealing to the symmetry of the basis, (\ref{eqn4}) holds for all $f \in X^*$ if and only if the family of vectors
\[
\set{\sum_{k=1}^n (\mu(k)-\mu(k-1))e_{\gamma_k}}{n \in \N},
\]
is weakly bounded, and hence norm bounded, by the uniform boundedness principle.
\end{proof}

Armed with Theorem \ref{theta_equivalences}, we turn to our first class of new examples.

\begin{example}\label{ex_Lorentz}
Consider a decreasing sequence $w=(w_n)_{n=1}^\infty$ of positive numbers such that $\sum_{n=1}^\infty w_n = \infty$. The {\em Lorentz space} $d(w,1,\Gamma)$ is the space of all functions $\map{f}{\Gamma}{\R}$, such that
\begin{equation}\label{defn_Lorentz_norm}
\n{f} \;:=\; \sup\set{\sum_{\gamma \in \Gamma} w_n|f(\gamma_n)|}{(\gamma_n)_{n=1}^\infty \subseteq \Gamma \text{ is a sequence of distinct points}},
\end{equation}
is finite. A treatment of the separable version of these spaces (where $\Gamma=\N$) can be found in \cite[Section 4.e]{lt:77}. It is clear that if $(\gamma_n)_{n=1}^\infty \subseteq \Gamma$ is chosen in such a way that $(|f(\gamma_n)|)_{n=1}^\infty$ is decreasing, then
\[
\n{f} \;=\; \sum_{n=1}^\infty w_n |f(\gamma_n)|.
\]
The {\em predual of Lorentz space}, $d_*(w,1,\Gamma)$, is the set of all functions $\map{x}{\Gamma}{\R}$, such that $\cl{x} \in c_0$, where
\[
\cl{x}(k) \;=\; \max\set{\frac{\sum_{i=1}^k |x(\gamma_i)|}{\sum_{i=1}^k w_i}}{\gamma_1,\ldots,\gamma_k \in \Gamma \text{ are distinct}},
\]
and $\n{x}=\pn{\cl{x}}{\infty}$. The separable version of these spaces was first considered in \cite{sargent:60}. The families of unit vectors $(e_\gamma)_{\gamma \in \Gamma}$ and $(e_\gamma^*)_{\gamma \in \Gamma}$ form canonical 1-symmetric bases of $d_*(w,1,\Gamma)$ and $d(w,1,\Gamma)$, respectively.

It is straightforward to see that
\begin{equation}\label{eqn_Lorentz_mu}
\mu(n) \;=\; \pn{\sum_{k=1}^n e^*_{\gamma_k}}{s} \;=\; \n{\sum_{k=1}^n e^*_{\gamma_k}} \;=\; \sum_{k=1}^n w_k,
\end{equation}
and that Theorem \ref{theta_equivalences} (2) is fulfilled trivially by any space $d_*(w,1,\Gamma)$, as
\[
\n{\sum_{k=1}^n (\mu(k)-\mu(k-1))e_{\gamma_k}} \;=\; \n{\sum_{k=1}^n w_k e_{\gamma_k}} \;=\; 1, 
\]
for all $n \in \N$. Hence Corollary \ref{cor_main_tool_3} applies. Finally,
\[
\lambda(n) \;=\; \frac{n}{\mu(n)} \;=\; \frac{n}{\sum_{k=1}^n w_k},
\]
and this forms a bounded sequence if and only if $w_n \not\to 0$. Therefore, provided $w_n \to 0$, Proposition \ref{prop_not_subspace_c_0} tells us that $d_*(w,1,\Gamma)$ is not isomorphic to a subspace of $c_0(\Delta)$, for any set $\Delta$.
\end{example}

The next result provides another test.

\begin{cor}\label{lambda_only} Let $X$ have a shrinking symmetric basis $(e_\gamma)_{\gamma \in \Gamma}$, and suppose that
\[
\sup\set{\n{\sum_{k=1}^n \frac{e_{\gamma_k}}{\lambda(k)}}}{n \in \N} \;<\; \infty,
\]
or equivalently, the series
\[
\sum_{k=1}^\infty \frac{e_{\gamma_k}}{\lambda(k)},
\]
converges in $(X^{**},w^{**})$. Then $\theta(f)<\infty$ for all $f \in X^*$.
\end{cor}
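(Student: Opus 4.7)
The plan is to reduce, via Theorem \ref{theta_equivalences}, to showing that the partial sums $y_n := \sum_{k=1}^{n}(\mu(k)-\mu(k-1))e_{\gamma_{k}}$ are uniformly norm-bounded, and then deduce this directly from the hypothesis by comparing $y_n$ with the partial sums $\sum_{k=1}^n e_{\gamma_k}/\lambda(k)$ in the 1-symmetric renorming $\pndot{s}$. The equivalence in the statement between the supremum being finite and weak-star convergence in $X^{**}$ is the same shrinking-basis fact used in the proof of Theorem \ref{theta_equivalences}, namely \cite[Proposition 1.b.2]{lt:77}, so the two formulations of the hypothesis require no separate argument.

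The key observation is to factor each coefficient as $\mu(k)-\mu(k-1) = \alpha_k/\lambda(k)$, where $\alpha_k := (\mu(k)-\mu(k-1))\lambda(k)$, so that $y_n = \sum_{k=1}^{n}\alpha_k\,e_{\gamma_k}/\lambda(k)$. I claim that $0\leq\alpha_k\leq 1$ for every $k$. Non-negativity is immediate from monotonicity of $\mu$. For the upper bound, use the identity $\lambda(k)\mu(k)=k$: the inequality $\alpha_k\leq 1$ is equivalent to $\mu(k)-\mu(k-1)\leq\mu(k)/k$, hence to $(k-1)\mu(k)\leq k\mu(k-1)$, hence to $\mu(k)/k\leq\mu(k-1)/(k-1)$, and via $\mu(k)/k=1/\lambda(k)$, finally to $\lambda(k)\geq\lambda(k-1)$. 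The latter is automatic from the 1-unconditionality of $(e_\gamma)_{\gamma\in\Gamma}$ with respect to $\pndot{s}$, which is part of the 1-symmetric property.

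Given $0\leq\alpha_k\leq 1$, the conclusion follows from 1-unconditionality in $\pndot{s}$: with $K$ the symmetric basis constant,
\[
\n{y_n}\;\leq\;\pn{y_n}{s}\;=\;\pn{\sum_{k=1}^{n}\alpha_k\,\frac{e_{\gamma_k}}{\lambda(k)}}{s}\;\leq\;\pn{\sum_{k=1}^{n}\frac{e_{\gamma_k}}{\lambda(k)}}{s}\;\leq\;K\n{\sum_{k=1}^{n}\frac{e_{\gamma_k}}{\lambda(k)}}.
\]
The right-hand side is bounded in $n$ by hypothesis, so $\sup_n\n{y_n}<\infty$, and Theorem \ref{theta_equivalences} delivers $\theta(f)<\infty$ for every $f\in X^*$. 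I do not expect a genuine obstacle here: the whole argument hinges on the clean coefficient bound $\alpha_k\leq 1$, which is ultimately just monotonicity of $\lambda$ repackaged through the identity $\lambda\mu=n$.
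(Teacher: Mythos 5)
Your proof is correct and is essentially the paper's argument: the bound $\alpha_k\leqslant 1$ is exactly the paper's one-line inequality $\mu(k)-\mu(k-1)\leqslant 1/\lambda(k)$, obtained from the same identity $\lambda(n)\mu(n)=n$ and the same monotonicity of $\lambda$, after which both arguments invoke Theorem \ref{theta_equivalences}(2). The only difference is that you spell out the unconditionality estimate that the paper leaves implicit in ``it follows that (2) is fulfilled''.
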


\begin{proof}
Because $\lambda(n)\mu(n)=n$ for all $n$,
\[
\mu(k) - \mu(k-1) \;=\; \frac{k}{\lambda(k)} - \frac{k-1}{\lambda(k-1)} \;\leqslant\; \frac{k}{\lambda(k)} - \frac{k-1}{\lambda(k)} \;=\; \frac{1}{\lambda(k)}.
\]
It follows that Theorem \ref{theta_equivalences} (2) is fulfilled.
\end{proof}

We provide an application of Corollary \ref{lambda_only}, by considering a symmetric version of the Nakano space. Let $\Gamma$ be a set and let $(p_k)_{k=1}^\infty$ be a non-decreasing sequence, with $p_1 \geqslant 1$. By $h^S_{(p_k)}(\Gamma)$ we denote the space of all real functions $x$ defined on $\Gamma$, such that
\[
\phi\bigg(\frac{x}{\rho}\bigg) \;<\; \infty,
\]
for all $\rho > 0$, where
\[
\phi(x) \;:=\; \sup\set{\sum_{k=1}^\infty |x(\gamma_k)|^{p_k}}{(\gamma_k)_{k=1}^\infty \text{ is a sequence of distinct points in }\Gamma}.
\]
Given $x \in h^S_{(p_n)}(\Gamma)$, we set
\[
\n{x} \;=\; \inf\set{\rho>0}{\phi\bigg(\frac{x}{\rho}\bigg) \leqslant 1}.
\]
It is easy to see that the standard unit vectors $(e_\gamma)_{\gamma \in \Gamma}$ form a $1$-symmetric basis in $h^S_{(p_n)}(\Gamma)$. In \cite{afnst:18}, it is shown that if $p_n \to \infty$, then $h^S_{(p_n)}(\Gamma)$ is isomorphically polyhedral.

\begin{prop}\label{prop_Nakano} Let $\sum_{k=1}^\infty k^{-1}\rho^{-p_k}$ converge for some $\rho>1$. Then
\[
\sup\set{\n{\sum_{k=1}^n \frac{e_{\gamma_k}}{\lambda(k)}}}{n \in \N} \;<\; \infty.
\]
\end{prop}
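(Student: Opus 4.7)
The plan is to identify $\lambda(k)$ via the defining modular equation, extract a sharp lower bound for $\lambda(k)^{p_k}$, and then directly test the Nakano modular against the candidate vector $y_n := \sum_{k=1}^n e_{\gamma_k}/\lambda(k)$.

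First I would observe that, because $(e_\gamma)_{\gamma\in\Gamma}$ is $1$-symmetric and the non-zero coordinates of $\sum_{k=1}^n e_{\gamma_k}$ are all equal to $1$, the modular simplifies to $\phi\bigl(\rho^{-1}\sum_{k=1}^n e_{\gamma_k}\bigr) = \sum_{k=1}^n \rho^{-p_k}$ (for $\rho\geqslant 1$). Hence $\lambda(n)$ is the unique $\rho\geqslant 1$ satisfying $\sum_{k=1}^n \rho^{-p_k} = 1$, with $\lambda(1)=1$. From this identity, the fact that $\lambda(n)\geqslant 1$ combined with $(p_k)$ non-decreasing shows that $\lambda(n)^{-p_n}$ is the smallest of the $n$ summands, so $n\lambda(n)^{-p_n}\leqslant 1$. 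This yields the key estimate $\lambda(k)^{-p_k}\leqslant 1/k$ for all $k \in \N$.

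Next, given $R>1$, the non-zero coordinates of $y_n/R$ are $1/(R\lambda(k))$, already arranged in non-increasing order and all lying in $(0,1]$. Using the $1$-symmetric Nakano-type structure of $\phi$ on such a non-negative, decreasingly ordered vector,
\[
\phi(y_n/R) \;=\; \sum_{k=1}^n (R\lambda(k))^{-p_k} \;=\; \sum_{k=1}^n R^{-p_k}\lambda(k)^{-p_k} \;\leqslant\; \sum_{k=1}^n \frac{R^{-p_k}}{k}.
\]
Finally, I would invoke the hypothesis to select $\rho>1$ with $S:=\sum_{k=1}^\infty k^{-1}\rho^{-p_k}<\infty$ and write $R=\rho\rho'$ with $\rho'\geqslant 1$. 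Since $p_k\geqslant p_1$, this gives $R^{-p_k}\leqslant (\rho')^{-p_1}\rho^{-p_k}$, whence $\phi(y_n/R) \leqslant (\rho')^{-p_1}S$. Since $p_1\geqslant 1$, I can pick $\rho'$ large enough that $(\rho')^{-p_1}S\leqslant 1$; this forces $\|y_n\|\leqslant R$ uniformly in $n$, as required.

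The main obstacle I anticipate is step three, namely confirming that the modular $\phi$ evaluates on the non-negative, decreasingly ordered vector $y_n/R$ in the clean form $\sum_k (R\lambda(k))^{-p_k}$. Once the behaviour of $\phi$ on such vectors is pinned down (and given the symmetry of the basis, this is essentially the standard Nakano formula applied to the decreasing rearrangement), everything else reduces to the elementary bookkeeping sketched above.
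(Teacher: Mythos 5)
Your proposal follows the paper's proof in all essentials: the identity $\sum_{k=1}^n\lambda(n)^{-p_k}=1$ and the resulting estimate $\lambda(k)^{-p_k}\leqslant k^{-1}$ are exactly the paper's first step, and the hypothesis is used in the same way. The only organizational difference is that the paper chooses $m$ with $\sum_{k=m+1}^\infty k^{-1}\rho^{-p_k}\leqslant 1$, bounds the tail $\sum_{k=m+1}^n e_{\gamma_k}/\lambda(k)$ by $\rho$ and finishes with the triangle inequality, whereas you keep the full sum and absorb the constant by enlarging $R$ to $\rho\rho'$; both are harmless bookkeeping.

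The step you flag is, however, the genuinely delicate one, and the equality you assert there is not correct as a general fact. Since $\phi$ is a supremum over all injections of the support into $\N$, concluding $\n{y_n}\leqslant R$ requires an \emph{upper} bound for $\sum_{k=1}^n(R\lambda(k))^{-p_{\sigma(k)}}$ over every injection $\sigma$, and the index-aligned assignment $\sigma(k)=k$ need not be the maximizer: for $a,b\in(0,1)$ with $a>b$ and exponents $p<q$ one can have $a^q+b^p>a^p+b^q$ (take $a$ sufficiently close to $1$), so pairing the largest coordinate with the smallest exponent is not optimal in general, and your displayed sum is a priori only a lower bound for $\phi(y_n/R)$. (The paper's own proof passes over the same point.) The gap is repairable within your scheme. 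For any injection $\sigma$, set $j=\min(k,\sigma(k))$; if $\sigma(k)\geqslant k$ then $(R\lambda(k))^{-p_{\sigma(k)}}\leqslant(R\lambda(k))^{-p_k}\leqslant R^{-p_k}k^{-1}$, while if $\sigma(k)<k$ then $(R\lambda(k))^{-p_{\sigma(k)}}\leqslant(R\lambda(\sigma(k)))^{-p_{\sigma(k)}}\leqslant R^{-p_{\sigma(k)}}\sigma(k)^{-1}$; in either case the term is at most $R^{-p_j}j^{-1}$, and each value of $j$ arises from at most two indices $k$. Hence $\phi(y_n/R)\leqslant 2\sum_{j=1}^\infty R^{-p_j}j^{-1}$, and your final rescaling $R=\rho\rho'$ closes the argument with the extra factor $2$ absorbed into $(\rho')^{-p_1}$.
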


\begin{proof} Since $\lambda(1)=1$ and the sequences $(p_k)_{k=1}^\infty$ and $(\lambda(k))_{k=1}^\infty$ are non-decreasing, we know that
\[
n\lambda(n)^{-p_n} \;\leqslant\; \sum_{k=1}^n \lambda(n)^{-p_k} \;=\; \phi\left( \frac{\sum_{k=1}^n e_{\gamma_k}}{\lambda(n)} \right) \;=\; 1,
\]
hence
\begin{equation}\label{eqn_nakano_1}
\lambda(n)^{-p_n} \leqslant n^{-1},
\end{equation}
for all $n$. Using the hypothesis, let $m\in\N$ such that $\sum_{k=m+1}^\infty k^{-1} \rho^{-p_k} \leqslant 1$. It follows that 
\[
\sum_{k=m+1}^\infty \left(\frac{1}{\lambda(k)\rho}\right)^{p_k} \;\leqslant\; 1,
\]
and thus
\[
\n{\sum_{k=m+1}^n \frac{e_{\gamma_k}}{\lambda(k)}} \;\leqslant\; \rho,
\]
whenever $n \geqslant m+1$. We conclude that
\[
\n{\sum_{k=1}^n \frac{e_{\gamma_k}}{\lambda(k)}} \;\leqslant\; \n{\sum_{k=1}^m \frac{e_{\gamma_k}}{\lambda(k)}} + \rho,
\]
for all $n$.
\end{proof}

Corollary \ref{lambda_only} applies to any space $h^S_{(p_n)}(\Gamma)$ satisfying the hypothesis of Proposition \ref{prop_Nakano}. By Proposition \ref{prop_not_subspace_c_0}, $h^S_{(p_n)}(\Gamma)$ does not embed into any space of the form $c_0(\Delta)$, provided $\log(\lambda(n)) \to \infty$. Using (\ref{eqn_nakano_1}), we deduce that the same holds if
\begin{equation}\label{eqn_nakano_2}
\lim_{k \to\infty} \frac{\log(k)}{p_k} \;=\; \infty.
\end{equation}

\begin{example} Set $p_k = 2\log(\log(k)+1)+1$. Then
\[
\sum_{k=2}^\infty k^{-1}\me^{-p_k} \;<\; \sum_{k=2}^\infty \frac{1}{k\log^2(k)} \;<\; \infty,
\]
and thus Corollary \ref{lambda_only} applies to $h^S_{(p_n)}(\Gamma)$. Moreover, for large $k$, we have
\[
\frac{\log(k)}{p_k} \;=\; \frac{\log(k)}{2\log(\log(k)+1)+1} \;>\; \frac{\log(k)}{3\log(\log(k))} \;\to\; \infty,
\]
as $k \to \infty$.
\end{example}

\begin{prob} Does there exist a non-decreasing sequence $(p_k)_{k=1}^\infty$, with $p_1 \geqslant 1$, $p_k \to \infty$, such that 
\[
\sup\set{\n{\sum_{k=1}^n \frac{e_{\gamma_k}}{\lambda(k)}}}{n \in \N} \;=\; \infty,
\]
or
\[
\sup\set{\n{\sum_{k=1}^n (\mu(k)-\mu(k-1))e_{\gamma_k}}}{n \in \N} \;=\; \infty,
\]
with respect to $h^S_{(p_n)}(\Gamma)$?
\end{prob}

For some classes of spaces, for example, Orlicz spaces whose standard basis is shrinking, the implication in Corollary \ref{lambda_only} is reversible -- see Theorem \ref{Orliczcondition} below. We close the section by showing that this is not the case in general.

\begin{example} Let $w_n = n^{-1}$, $n \in \N$. Then $d_*(w,1):=d_*(w,1,\N)$ satisfies the conditions of Theorem \ref{theta_equivalences}, but not the hypothesis of Corollary \ref{lambda_only}.
\end{example}

\begin{proof} The first conclusion is shown in Example \ref{ex_Lorentz}. To see the second, by (\ref{eqn_Lorentz_mu}) we have 
\[
\log(n) \;\leqslant\; \mu(n) \;=\; \sum_{k=1}^n k^{-1} \;\leqslant\; 1 + \log(n),
\]
for all $n \in \N$, and hence, by the definition of the norm on $d_*(w,1)$,
\begin{align*}
\n{\sum_{k=1}^n \frac{e_k}{\lambda(k)}} \;\geqslant\; \frac{\sum_{k=1}^n \lambda(k)^{-1}}{\sum_{k=1}^n k^{-1}} &\;=\; \frac{\sum_{k=1}^n k^{-1}\mu(k)}{\mu(n)}\\
&\;\geqslant\; \frac{1}{1+\log(n)}\sum_{k=1}^n \frac{\log(k)}{k}.
\end{align*}
The function $\log(t)/t$ is decreasing for $t \geqslant \me$, thus given $n\geqslant 3$
\[
\sum_{k=3}^n \frac{\log(k)}{k} \;\geqslant\; \lint{3}{n+1}{\frac{\log(t)}{t}}{t} \;=\; {\ts \frac{1}{2}\big(\log^2(n+1)-\log^2(3)\big)}.
\]
Consequently,
\[
\n{\sum_{k=1}^n \frac{e_k}{\lambda(k)}} \;\geqslant\; \frac{\frac{1}{2}\big(\log^2(n) - \log^2(3)\big)}{1+\log(n)} \;\to\; \infty. \tag*{\qedhere}
\]
\end{proof}

\section{Orlicz spaces and Leung's Condition}

In this section, we consider Orlicz space in the context of norm approximation. Let $M$ be an Orlicz function and let $\Gamma$ be a set. The space $\ell_M(\Gamma)$ is the set of all functions $\map{x}{\Gamma}{\R}$ such that
\[
\n{x} :\;=\; \inf\set{\rho>0}{\sum_{\gamma \in \Gamma} M\bigg(\frac{|x(\gamma)|}{\rho}\bigg) \leqslant 1},
\]
is finite. The space $h_M(\Gamma)$ is that closed subspace of $\ell_M(\Gamma)$ for which 
\[
\sum_{\gamma \in \Gamma} M\bigg(\frac{|x(\gamma)|}{\rho}\bigg) \;<\; \infty,
\]
for \emph{all} $\rho>0$. We denote by $h_M$ the space $h_M(\N)$. It is easy to check that the standard unit vectors $(e_\gamma)_{\gamma \in \Gamma}$ form a 1-symmetric basis of $h_M(\Gamma)$. It follows from \cite[Proposition 1.b.2]{lt:77} and the definitions that if this basis is shrinking (and in this section we will always assume so), then $\ell_M(\Gamma)$ is isometric to $h_M(\Gamma)^{**}$. We assume hereafter that $M$ is \emph{non-degenerate}, that is, $M(t)>0$ for all $t>0$. In this case, basic calculation yields
\begin{equation}\label{eqn_Orlicz_lambda}
\lambda(n) \;=\; \n{\sum_{k=1}^n e_{\gamma_k}} \;=\; \frac{1}{M^{-1}\big(\frac{1}{n}\big)},
\end{equation}
and thus $h_M(\Gamma)$ is not isomorphic to a subspace of $c_0(\Delta)$, for any $\Delta$, by Proposition \ref{prop_not_subspace_c_0} (if $M$ is degenerate then $h_M(\Gamma)$ is isomorphic to $c_0(\Gamma)$). 

The theory of polyhedrality in Orlicz sequence spaces was initiated by Leung in \cite{leung:94,leung:99}. His work focuses in large measure on a condition on $M$ that we shall call \emph{Leung's Condition}.

\begin{defn}[cf.~{\cite[Theorem 4]{leung:94}}]\label{defn_Leung}
We say that a non-degenerate Orlicz function $M$ satisfies {\em Leung's Condition} if there exists $K>1$ such that
\[
\lim_{t \to 0} \frac{M(K^{-1}t)}{M(t)} \;=\; 0.
\]
\end{defn}

The utility of this condition is demonstrated by the following results.

\begin{thm}[{\cite[Theorem 18]{leung:99}}]\label{theorem_Leung} The following statements are equivalent.
\begin{enumerate}
\item $M$ satisfies Leung's condition;
\item $h_M$ embeds isomorphically in $C(\omega^\omega+1)$;
\item $h_M$ embeds isomorphically in $C(\alpha+1)$ for some countable ordinal $\alpha$.
\end{enumerate}
\end{thm}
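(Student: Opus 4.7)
The implication $(2) \Rightarrow (3)$ is immediate, since $\omega^\omega + 1$ is a countable compact ordinal. Hence the substantive content lies in proving $(1) \Rightarrow (2)$ and $(3) \Rightarrow (1)$, which I describe in turn.

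For $(1) \Rightarrow (2)$, the plan is to construct an isomorphic embedding of $h_M$ into $C(\omega^\omega + 1)$ by exhibiting a $w^*$-compact boundary $B \subseteq h_M^*$ whose Cantor-Bendixson rank (in the $w^*$-topology) is at most $\omega^\omega$. First, I would use the duality $h_M^* = \ell_{M^*}$, where $M^*$ is the complementary Orlicz function, and recall that norm-attaining functionals admit a concrete description in terms of sign and coefficient sequences compatible with the Orlicz norm. Second, I would use Leung's Condition to show that each $f \in B_{\ell_{M^*}}$ is a $w^*$-limit of finitely supported functionals in a layered manner: for each $n$, let $B_n$ consist of suitably scaled functionals whose supports have size at most $K^n$. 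The Condition yields an estimate $M((K^{-1})^n t)/M(t) \to 0$ as $t \to 0$ that translates, via duality, into the claim that the $w^*$-derivative of the closure of $\bigcup_n B_n$ peels off precisely the outermost layer $B_n$, so that the Cantor-Bendixson derivatives terminate after at most $\omega^\omega$ iterations (the $\omega^\omega$ arises because each layer contributes a factor of $\omega$, and there are countably many layers). Third, taking $B$ to be the closure of $\bigcup_n B_n$, the map $x \mapsto \widehat{x}$ with $\widehat{x}(b) := b(x)$ embeds $h_M$ isomorphically into $C(B)$, and since $B$ is homeomorphic to a compact ordinal of rank at most $\omega^\omega$, this $C(B)$ sits isometrically inside $C(\omega^\omega + 1)$ by classical Bessaga-Pelczynski theory of $C(K)$ on countable scattered compacta.

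For $(3) \Rightarrow (1)$, I would argue by contrapositive. Suppose Leung's Condition fails, so that for every $K > 1$ one has $\limsup_{t \to 0} M(K^{-1}t)/M(t) > 0$. Standard Orlicz-space machinery then forces the lower Matuszewska-Orlicz index of $M$ to be strictly greater than $1$; this in turn yields a block basic sequence of the standard basis $(e_n)$ that is equivalent to the unit vector basis of $\ell_p$ for some $p \in (1, \infty)$. However, for every countable ordinal $\alpha$, the space $C(\alpha+1)$ is $c_0$-saturated by the classical structure theory of $C(K)$-spaces on scattered countable compacta, and hence contains no subspace isomorphic to $\ell_p$ for any $p>1$. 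This contradicts the hypothesized embedding $h_M \hookrightarrow C(\alpha+1)$, so Leung's Condition must hold.

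The main obstacle is the boundary construction in $(1) \Rightarrow (2)$: the precise indexing of the iterated $w^*$-derivatives by ordinals strictly below $\omega^\omega$ requires a careful quantitative reading of Leung's Condition, tracking how the constant $K$ controls both the effective support size of each approximating functional and the rate at which the tail mass in $\ell_{M^*}$ decays. Making this bookkeeping deliver exactly the ordinal $\omega^\omega$ (and not more) is the delicate core of the argument; by contrast, once the boundary is in place, the embedding step and the $(3) \Rightarrow (1)$ direction follow from standard $C(K)$-space and Orlicz-space techniques.
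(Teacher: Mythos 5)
This statement is quoted in the paper as \cite[Theorem 18]{leung:99}; the paper itself gives no proof, so there is nothing internal to compare your argument against. Judged on its own terms, your sketch has a genuine gap in the direction $(3)\Rightarrow(1)$, which is the substantive half of the theorem. You argue by contrapositive that failure of Leung's condition forces a copy of some $\ell_p$, $p<\infty$, inside $h_M$, and then invoke the $c_0$-saturation of $C(\alpha+1)$ for countable $\alpha$. But failure of Leung's condition does \emph{not} imply that any $\ell_p$ embeds into $h_M$. The set of $p$ with $\ell_p\hookrightarrow h_M$ is governed by the Matuszewska--Orlicz indices at zero, and one can construct Orlicz functions with $\alpha_M=\beta_M=\infty$ (so that $h_M$ contains no $\ell_p$, $1\leqslant p<\infty$, and is in fact $c_0$-saturated) which nevertheless fail Leung's condition: using the dyadic reformulation in Proposition \ref{prop_implies_Leung}, writing $M(2^{-j})=2^{-c_j}$, the condition $\alpha_M=\infty$ only demands $c_{j+m}-c_j\geqslant mq-C_q$ for every $q$, which is compatible with $\liminf_j(c_{j+m}-c_j)<\infty$ for every fixed $m$, i.e.\ with the failure of Leung's condition. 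For such $M$ your contrapositive produces no contradiction, so the argument cannot close. The real content of $(3)\Rightarrow(1)$ lies precisely in distinguishing $c_0$-saturated spaces that do embed into some $C(\alpha+1)$ from those that do not, which requires a finer invariant (e.g.\ a Szlenk-type index or Leung's direct analysis of symmetric sequences in $C(\alpha)$) rather than the mere absence of $\ell_p$-subspaces.

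There is also a quantitative problem in your $(1)\Rightarrow(2)$ bookkeeping. If the $w^*$-Cantor--Bendixson derivatives of your boundary $B$ ``terminate after at most $\omega^\omega$ iterations,'' then $B$ is homeomorphic to an ordinal interval of order type roughly $\omega^{\omega^\omega}$, and $C(B)$ does \emph{not} embed into $C(\omega^\omega+1)$ (the two are separated by the Szlenk index). To land in $C(\omega^\omega+1)$ you need $B$ homeomorphic to $[0,\gamma]$ with $\gamma<\omega^{\omega+1}$ (up to Bessaga--Pe{\l}czy\'nski equivalence), i.e.\ the derivatives must stabilise after at most $\omega+1$ steps, with $B^{(\omega)}$ finite. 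Your heuristic ``each layer contributes a factor of $\omega$ over countably many layers'' correctly suggests the order type $\omega^\omega$, but you have conflated order type with derivation length; the construction must be organised so that the $n$-th derivative removes the $n$-th layer and the $\omega$-th derivative is already finite. The reduction $(2)\Rightarrow(3)$ and the final embedding $x\mapsto\widehat{x}$ into $C(B)$ are fine.
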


In particular, if $M$ satisfies Leung's condition then $h_M$ admits an equivalent polyhedral norm. This was first proved in \cite[Theorem 4]{leung:94}, but it follows easily from Theorem \ref{theorem_Leung} since, given a countable compact Hausdorff space $K$, the space $C(K)$ admits a countable boundary and any such space admits an equivalent polyhedral norm \cite[Theorem 3]{fonf:80}. More generally, Leung's condition implies the existence of an equivalent polyhedral norm on $h_M(\Gamma)$ \cite[Corollary 25]{fpst:08}. Concerning the approximation of all equivalent norms on $h_M(\Gamma)$ by polyhedral and $C^\infty$-smooth norms, we have the following result.

\begin{thm}\label{Orliczcondition}
Let $\Gamma$ be a set and let $M$ be a non-degenerate Orlicz function. The following statements are equivalent.
\begin{enumerate}
\item There exists $K>1$ such that
\begin{equation}\label{eqn5}
\sum_{n=1}^\infty M\bigg(\frac{M^{-1}\big(\frac{1}{n}\big)}{K} \bigg) \;<\; \infty.
\end{equation}
\item The quantity
\[
\sup\set{\n{\sum_{k=1}^n \frac{e_{\gamma_k}}{\lambda(k)}}}{n \in \N},
\]
is finite.
\item The standard basis of $X:=h_M(\Gamma)$ is shrinking and $\theta(f)<\infty$ for all $f \in X^*$.
\end{enumerate}
\end{thm}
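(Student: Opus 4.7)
My plan is to establish the theorem via the cycle $(1) \Leftrightarrow (2) \Rightarrow (3) \Rightarrow (2)$. The equivalence $(1) \Leftrightarrow (2)$ is essentially a translation of the Orlicz norm formula: $\n{\sum_{k=1}^n c_k e_{\gamma_k}} \leqslant R$ if and only if $\sum_{k=1}^n M(c_k/R) \leqslant 1$. Applying this with $c_k = 1/\lambda(k) = M^{-1}(1/k)$ (from (\ref{eqn_Orlicz_lambda})) identifies (2) with the existence of $R > 1$ such that $\sum_k M(M^{-1}(1/k)/R) \leqslant 1$; this is equivalent to (1) because, given finiteness of the sum for some $K > 1$, dominated convergence lets me replace $K$ by a larger value which makes the sum at most $1$.

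For $(2) \Rightarrow (3)$: Corollary \ref{lambda_only} immediately yields $\theta(f) < \infty$ for all $f \in X^*$, so it remains to establish that the standard basis is shrinking. The sequence $M(M^{-1}(1/n)/K)$ is non-increasing, and summability gives $nM(M^{-1}(1/n)/K) \to 0$, i.e.\ $M(t/K)/M(t) \to 0$ along the subsequence $t = M^{-1}(1/n) \to 0$. Monotonicity of $M$ allows interpolation over arbitrary $t \to 0$, yielding Leung's condition (Definition \ref{defn_Leung}). By Theorem \ref{theorem_Leung}, $h_M$ embeds in $C(\omega^\omega+1)$, which, being a $C(K)$ for $K$ countable compact, contains no copy of $\ell_1$. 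Consequently $\ell_1 \not\hookrightarrow h_M(\Gamma)$ (any such copy would be separable and lie inside some $h_M(\Delta)$ with $\Delta$ countable), and since the standard basis of $h_M(\Gamma)$ is unconditional, it must be shrinking.

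For $(3) \Rightarrow (2)$: the shrinking of the basis of $h_M(\Gamma)$ is equivalent, by standard Orlicz-space theory (cf.\ \cite{lt:77}), to $M$ satisfying the $\nabla_2$ condition at zero: there exist $q > 1$ and $s_0 > 0$ with $M(\lambda t) \geqslant \lambda^q M(t)$ for $\lambda \geqslant 1$ and $0 < t \leqslant s_0$. Inverting, $M^{-1}(s/\lambda^q) \geqslant M^{-1}(s)/\lambda$ on the relevant range; applied with $s = 1/(k-1)$ and $\lambda^q = k/(k-1)$, this gives
\[
\frac{u_{k-1}}{u_k} \;\leqslant\; \left(\frac{k}{k-1}\right)^{1/q}, \qquad u_k := M^{-1}(1/k),
\]
for all sufficiently large $k$. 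Combined with $\mu(k) = k u_k$ (from $\lambda(k)\mu(k) = k$ and (\ref{eqn_Orlicz_lambda})), the differences $a_k := \mu(k) - \mu(k-1) = k u_k - (k-1) u_{k-1}$ satisfy
\[
a_k \;\geqslant\; k u_k\bigl(1 - (1 - 1/k)^{1-1/q}\bigr),
\]
and a Taylor expansion shows $k\bigl(1 - (1-1/k)^{1-1/q}\bigr) \to (q-1)/q$ as $k \to \infty$. Hence $a_k \geqslant c u_k$ for some constant $c > 0$ and all $k$ beyond some $k_0$. By Theorem \ref{theta_equivalences}, the hypothesis $\theta(f) < \infty$ for all $f$ provides $L > 0$ with $\sum_k M(a_k/L) \leqslant 1$; choosing $K = L/c$, the estimate $u_k \leqslant a_k/c$ yields $\sum_{k \geqslant k_0} M(u_k/K) \leqslant \sum_{k \geqslant k_0} M(a_k/L) < \infty$. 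Absorbing the finitely many initial terms by slightly enlarging $K$ gives (1), which is (2).

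The main obstacle is the pointwise lower bound $a_k \geqslant c u_k$ in the last step. The upper bound $a_k \leqslant u_k$ (implicit in the proof of Corollary \ref{lambda_only}) is cheap and monotonicity-based, but the matching lower bound genuinely relies on $M \in \nabla_2$, and hence on the shrinking half of hypothesis (3); the subtlety is transferring the $\nabla_2$ inequality through the inversion $M \leftrightarrow M^{-1}$ and selecting the right parameter $\lambda$ to capture the consecutive ratio $k/(k-1)$.
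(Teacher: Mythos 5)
Your equivalence $(1)\Leftrightarrow(2)$ and your implication $(2)\Rightarrow(3)$ are sound and essentially match the paper (the paper derives Leung's condition from (\ref{eqn5}) via Proposition \ref{prop_implies_Leung} and then excludes $\ell_1$ using \cite[Theorem 4.a.9]{lt:77}, while you exclude it via the embedding into $C(\omega^\omega+1)$; either route works, and your direct interpolation argument for Leung's condition from $nM(M^{-1}(\frac{1}{n})/K)\to 0$ is correct).

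The gap is in $(3)\Rightarrow(2)$, at exactly the point you flag as the main obstacle. You assert that shrinkingness of the basis gives $M(\lambda t)\geqslant \lambda^q M(t)$ for $\lambda\geqslant 1$ and small $t$ \emph{with constant $1$}, for $M$ itself. What the absence of $\ell_1$ actually gives (via the Matuszewska--Orlicz index $\alpha_M>1$, equivalently the $\nabla_2$ condition at zero) is only $M(\lambda t)\geqslant C^{-1}\lambda^q M(t)$ for some $C\geqslant 1$: the index condition is a statement about all scales $\lambda$ with a uniform constant, and it does not self-improve to constant $1$ for $\lambda$ arbitrarily close to $1$. Your computation needs constant $1$ in an essential way, since you apply the inequality with $\lambda^q=k/(k-1)\to 1$; with a constant $C>1$ the bound becomes $a_k\geqslant ku_k\bigl(1-C^{1/q}(1-1/k)^{1-1/q}\bigr)$, whose right-hand side is eventually negative, so the crucial estimate $a_k\geqslant cu_k$ is lost. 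One can build Orlicz functions with $\alpha_M>1$ that contain, at infinitely many scales, nearly radial affine pieces on which $M(\lambda t)/M(t)$ is only barely above $\lambda$, so the constant-$1$ inequality genuinely fails for $M$. The repair is the one the paper uses: since conditions (2) and (3) are invariant under passing to an equivalent norm, replace $M$ by an equivalent differentiable Orlicz function $N$ with $tN'(t)/N(t)>a>1$ for all $t>0$ (\cite[Theorem 4.a.9 and p.~144]{lt:77}); this differential inequality integrates to the constant-$1$ bound, and the Mean Value Theorem applied to $F(w)=wN^{-1}(\frac{1}{w})$ then yields $\mu_N(k)-\mu_N(k-1)>c/\lambda_N(k)$, which is precisely the discrete estimate you were after. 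With that substitution the rest of your argument goes through.
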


Before proving the theorem, we present three results, mostly due to D.~Leung (who contacted us after seeing a previous version of the paper), that help us to compare condition (\ref{eqn5}) with Leung's condition. Only the final consequence of Proposition \ref{prop_implies_Leung} had been proved by us (in a more complicated way) before Leung made contact. These results have much enhanced this part of the paper and we are grateful to Leung for giving us permission to include them here.

\begin{lem}[\cite{leung:18}] Condition (\ref{eqn5}) in Theorem \ref{Orliczcondition} is equivalent to the existence of $K>1$ such that
\begin{equation}\label{Leung_sum}
\sum_{j=1}^\infty \frac{M(K^{-1}2^{-j})}{M(2^{-j})} \;<\; \infty.
\end{equation}
\end{lem}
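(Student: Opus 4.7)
The plan is to sandwich both sums by a common dyadic sum through a grouping of the integers. I partition $\N$ into blocks $N_k := \set{n \in \N}{M^{-1}(1/n) \in (2^{-k-1}, 2^{-k}]} = \set{n \in \N}{1/M(2^{-k}) \leqslant n < 1/M(2^{-k-1})}$. Convexity of $M$ together with $M(0)=0$ forces $M(t)/t$ to be non-decreasing, hence $M(2^{-k-1}) \leqslant M(2^{-k})/2$; counting integers in the defining interval then gives $|N_k| \asymp 1/M(2^{-k-1})$ uniformly for $k$ large enough that $1/M(2^{-k-1})$ exceeds some absolute constant (the finitely many small~$k$ contribute a bounded total that cannot affect convergence).

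For each $n \in N_k$, monotonicity of $M$ delivers the sandwich
\[
M(K^{-1}2^{-k-1}) \;\leqslant\; M(M^{-1}(1/n)/K) \;\leqslant\; M(K^{-1}2^{-k}).
\]
Summing over $n \in N_k$ and using the estimate on $|N_k|$ yields
\[
\frac{M(K^{-1}2^{-k-1})}{M(2^{-k-1})} \;\lesssim\; \sum_{n \in N_k} M(M^{-1}(1/n)/K) \;\lesssim\; \frac{M(K^{-1}2^{-k})}{M(2^{-k-1})}.
\]
After re-indexing $j = k+1$, the lower bound is exactly the $j$th summand of (\ref{Leung_sum}) with constant $K$, while the upper bound, rewritten via $M(K^{-1}2^{-k}) = M((K/2)^{-1}2^{-j})$, is the $j$th summand of (\ref{Leung_sum}) with constant $K/2$.

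Summing in $k$: the lower bound immediately gives the implication (\ref{eqn5}) with $K > 1$ $\Rightarrow$ (\ref{Leung_sum}) with the \emph{same} $K$. Conversely, if (\ref{Leung_sum}) holds with some $K_0 > 1$, then by monotonicity of $M$ it also holds with $2K_0$ (the summand is decreasing in $K$), and applying the upper bound above with $K := 2K_0$ produces $\sum_n M(M^{-1}(1/n)/(2K_0)) < \infty$, which is (\ref{eqn5}) with $K = 2K_0 > 1$. The only ``friction'' in the argument is the factor-of-two shift in $K$ between the upper and lower dyadic estimates, but since both conditions are existential in $K > 1$ and the summand of (\ref{Leung_sum}) is monotone in $K$, this shift is invisible and the equivalence holds as stated.
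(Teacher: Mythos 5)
Your argument is correct and follows essentially the same route as the paper's: both proofs partition $\N$ into the dyadic blocks $\set{n}{2^{-k-1}<M^{-1}(1/n)\leqslant 2^{-k}}$, use convexity to get $M(2^{-k-1})\leqslant M(2^{-k})/2$ and hence the block-size estimate, sandwich the summands by monotonicity, and absorb the resulting half-step shift in $K$ via the existential quantifier. The only cosmetic difference is that the paper specializes to $K=2^m$ while you carry a general $K$ and shift by a factor of $2$.
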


\begin{proof} Given $j \in \N$, let $n_j \in \N$ be minimal, subject to the condition $n_j^{-1} \leqslant M(2^{-j})$. By this minimality, and the convexity of $M$, it follows that
\begin{equation}\label{Leung_sum_1}
2(n_j-1) \;<\; \frac{2}{M(2^{-j})} \;\leqslant\; \frac{1}{M(2^{-j-1})} \;\leqslant\; n_{j+1} \;<\; \frac{1}{M(2^{-j-1})} + 1.
\end{equation}
Given $m,n \in \N$, with $n_j \leqslant n < n_{j+1}$, we have $2^{-j-1} < M^{-1}(\frac{1}{n}) \leqslant 2^{-j}$, giving
\[
{\ts M(2^{-j-m-1}) \;<\; M(2^{-m}M^{-1}(\frac{1}{n})) \;\leqslant\; M(2^{-j-m}),}
\] 
and consequently
\[
\sum_{j=1}^\infty (n_{j+1}-n_j)M(2^{-j-m-1}) \;<\; \sum_{n=n_1}^\infty {\ts M(2^{-m}M^{-1}(\frac{1}{n}))} \;\leqslant\; \sum_{j=1}^\infty (n_{j+1}-n_j)M(2^{-j-m}).
\]
Hence the convergence of $\sum_{j=1}^\infty n_{j+1} M(2^{-j-m})$ implies that of $\sum_{n=1}^\infty M(2^{-m}M^{-1}(\frac{1}{n}))$. Together with (\ref{Leung_sum_1}), this in turn implies the convergence of $\sum_{j=1}^\infty n_{j+1} M(2^{-j-m-1})$. Again, given (\ref{Leung_sum_1}), the statements
\[
\sum_{j=1}^\infty n_{j+1} M(2^{-j-m}) \;<\; \infty \qquad\text{and}\qquad \sum_{j=1}^\infty \frac{M(2^{-j-m})}{M(2^{-j})} \;<\; \infty,
\]
are equivalent. This completes the proof. 
\end{proof}

\begin{prop}[mainly \cite{leung:18}]\label{prop_implies_Leung}Leung's condition is satisfied if and only if
\begin{equation}\label{Leung_sum_2}
\lim_{j \to \infty} \frac{M(2^{-j-m})}{M(2^{-j})} \;=\; 0,
\end{equation}
for sufficiently large $m \in \N$. Consequently, if $M$ satisfies condition (\ref{eqn5}) then it satisfies Leung's condition.
\end{prop}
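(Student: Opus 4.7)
The plan is to handle the two halves of the proposition separately, since the equivalence is a direct comparison argument and the final implication then follows from the preceding lemma together with a trivial monotonicity observation.

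For the equivalence of Leung's condition with (\ref{Leung_sum_2}), the idea is to translate between the continuous parameter $t \to 0$ and the dyadic sequence $2^{-j}$ using the monotonicity of $M$. In the direction $(\Rightarrow)$, fix $K > 1$ witnessing Leung's condition and choose $m \in \N$ with $2^{-m} \leq K^{-1}$. For any $j$, monotonicity of $M$ gives $M(2^{-j-m}) \leqslant M(K^{-1}2^{-j})$, and so
\[
\frac{M(2^{-j-m})}{M(2^{-j})} \;\leqslant\; \frac{M(K^{-1}2^{-j})}{M(2^{-j})} \;\to\; 0,
\]
verifying (\ref{Leung_sum_2}) for that $m$ (and hence, trivially, for all larger $m$ by another application of monotonicity in the numerator). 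In the direction $(\Leftarrow)$, assume (\ref{Leung_sum_2}) for some $m$ and take $K = 2^{m+1}$. For arbitrary $t > 0$ pick $j \in \N$ with $2^{-j-1} < t \leqslant 2^{-j}$; then $K^{-1}t \leqslant 2^{-j-m-1}$ and $M(t) \geqslant M(2^{-j-1})$, so
\[
\frac{M(K^{-1}t)}{M(t)} \;\leqslant\; \frac{M(2^{-(j+1)-m})}{M(2^{-(j+1)})} \;\to\; 0
\]
as $t \to 0$, which is Leung's condition.

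For the second statement, the preceding lemma shows that condition (\ref{eqn5}) is equivalent to the existence of $K > 1$ with $\sum_{j=1}^{\infty} M(K^{-1}2^{-j})/M(2^{-j}) < \infty$. In particular the terms of this convergent series tend to $0$. Now choose $m$ with $2^{-m} \leqslant K^{-1}$, and argue exactly as in the first paragraph: monotonicity of $M$ gives $M(2^{-j-m})/M(2^{-j}) \leqslant M(K^{-1}2^{-j})/M(2^{-j}) \to 0$, which is (\ref{Leung_sum_2}) for this $m$. The equivalence just established then yields Leung's condition.

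The entire argument is essentially bookkeeping between a constant $K > 1$ and a dyadic exponent $m$; the only point requiring a little care is to choose $K$ slightly larger than the dyadic step (we take $K = 2^{m+1}$ rather than $2^m$) in the $(\Leftarrow)$ direction, so as to absorb the loss from comparing $t$ with its dyadic neighbours.
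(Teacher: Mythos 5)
Your proof is correct and follows essentially the same route as the paper's: the forward direction by monotonicity after choosing $2^{-m}\leqslant K^{-1}$, the converse by bracketing $t$ between consecutive dyadic points with $K=2^{m+1}$, and the final implication by combining the preceding lemma (terms of a convergent series tend to zero) with the equivalence just proved. The paper merely compresses the forward direction to ``clearly''; your write-up supplies the same details it omits.
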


\begin{proof}Clearly, if $M$ satisfies Definition \ref{defn_Leung} then it satisfies (\ref{Leung_sum_2}) for a sufficiently large $m$.
To see that the converse holds, let $2^{-j-1} \leqslant t \leqslant 2^{-j}$. Then $M(2^{-j-1}) \leqslant M(t)$ and $M(2^{-m-1}t) \leqslant M(2^{-m-1-j})$. Therefore
\[
\frac{M(2^{-m-1}t)}{M(t)} \;\leqslant\; \frac{M(2^{-j-1-m})}{M(2^{-j-1})},
\]
meaning that we obtain Leung's condition, where $K=2^{m+1}$. The result now clearly follows from Lemma \ref{Leung_sum}.
\end{proof}

On the other hand, Leung's condition does not imply condition (\ref{eqn5}).

\begin{example}[\cite{leung:18}] Let $M$ be an Orlicz function satisfying
\[
M'(t) \;=\; a_j \;:=\; \prod_{k=1}^j \frac{1}{\log(k+2)},
\]
whenever $2^{-j-1} < t < 2^{-j}$. Then $M$ satisfies Leung's condition but not condition (\ref{eqn5}).
\end{example}

\begin{proof} We have $M(2^{-j}) \;=\; \sum_{\ell=j}^\infty 2^{-\ell-1} a_\ell$, so $2^{-j-1}a_j \leqslant M(2^{-j}) \leqslant 2^{-j}a_j$. It follows that 
\[
\frac{M(2^{-j-1})}{M(2^{-j})} \;\leqslant\; \frac{a_{j+1}}{a_j} \;\to\; 0,
\]
as $j\to\infty$. However, given $m \in \N$,
\[
\sum_{j=1}^\infty \frac{M(2^{-j-m})}{M(2^{-j})} \;\geqslant\; \sum_{j=1}^\infty \frac{2^{-j-m-1} a_{j+m}}{2^{-j}a_j} \;=\; 2^{-m-1}\sum_{j=1}^\infty \frac{a_{j+m}}{a_j} \;\geqslant\; 2^{-m-1}\sum_{j=1}^\infty \frac{1}{\log^m(j+3)},
\]
which diverges. The result now follows from the previous two.
\end{proof}

\begin{proof}[Proof of Theorem \ref{Orliczcondition}]
First, we demonstrate the equivalence of (1) and (2). Let $(\gamma_n) \subseteq \Gamma$ be the sequence of distinct points in $\Gamma$ from Section \ref{sect_symmetric}. Suppose that (1) holds. Recall equation (\ref{eqn_Orlicz_lambda}). Set $L= \max\{1,\sum_{n=1}^\infty M\big(M^{-1}(\frac{1}{n})/K \big)\}$. By convexity of $M$ and the fact that $M(0)=0$, we have $\sum_{n=1}^\infty M\big(M^{-1}(\frac{1}{n})/KL \big) \leqslant 1$ and thus 
\[
\n{\sum_{k=1}^n \frac{e_{\gamma_k}}{\lambda(k)}} \;\leqslant\; KL,
\]
for all $n$, giving (2). The converse implication follows similarly. 

Now we prove that (2) implies (3). Let (2) hold. Since (2) implies (1), using Proposition \ref{prop_implies_Leung}, $M$ satisfies Leung's condition. This implies that $X$ cannot contain an isomorphic copy of $\ell_p$, $p \geqslant 1$ \cite[Theorem 4.a.9]{lt:77}. Since $X$ cannot contain an isomorphic copy of $\ell_1$, the standard basis of $X$ must be shrinking \cite[Theorem 1.c.9]{lt:77}. Now we are in a position to apply Corollary \ref{lambda_only}, giving (3).

Finally, we show that (3) implies (2). Observe that both conditions (2) and (3) hold independently of the choice of (equivalent) norm on $X$ that is used to define $\lambda$ and $\theta$:~if (2) is satisfied with respect to one equivalent norm, then it is satisfied with respect to all others, and likewise for (3). Since the basis of $X$ is shrinking, $X$ cannot contain an isomorphic copy of $\lp{1}$. Therefore, according to \cite[Theorem 4.a.9 and p.~144]{lt:77}, we can deduce that there exists $a>1$ and a differentiable Orlicz function $N$ that is equivalent to $M$ at $0$ and satisfies
\[
\lim_{t \to 0} \inf \frac{tN'(t)}{N(t)} \;>\; a.
\]
We extend the function $N$ for large $t$ in such a way that 
\[
\frac{tN'(t)}{N(t)} \;>\; a,
\]
for all $t>0$. Given $w >0$, define $F(w)=wt(w)$, where $t(w)=N^{-1}(\frac{1}{w})$. By the Inverse Function Theorem,
\begin{align*}
F'(w) \;=\; t(w) + wt'(w) &\;=\; t(w) + w\cdot\frac{1}{N'(t(w))}\cdot\bigg({-\frac{1}{w^2}}\bigg) \\
 &\;=\; t(w) - \frac{1}{wN'(t(w))}\\
 &\;>\; t(w) - \frac{t}{awN(t(w))} \;=\; \bigg(\frac{a-1}{a}\bigg)t(w) \;=\;  \bigg(\frac{a-1}{a}\bigg){\ts N^{-1}\big(\frac{1}{w}\big)}.
\end{align*}
Set $c = (a-1)/a \in (0,1)$. By the Mean Value Theorem, for each $n$ there exists $\tau_n \in (0,1)$ such that
\begin{equation}\label{eqn6}
F(n) - F(n-1) \;=\; F'(n-\tau_n) \;>\;  cN^{-1}\bigg(\frac{1}{n-\tau_n}\bigg) \;>\; c{\ts N^{-1}\big(\frac{1}{n}\big)}. 
\end{equation}

The function $N$ yields an equivalent 1-symmetric norm $\pndot{N}$ on $X$, with respect to which the basis is normalized. Let $\lambda_N$ and $\mu_N$ be the functionals corresponding to $\pndot{N}$. Condition (3) is satisfied with respect to $\pndot{N}$, and hence
\[
\sup\set{\pn{\sum_{k=1}^n (\mu_N(k)-\mu_N(k-1))e_{\gamma_k}}{N}}{n \in \N} \;<\; \infty,
\]
by Theorem \ref{theta_equivalences}. Since $\lambda_N(n) = N^{-1}(\frac{1}{n})$ and $\mu_N(n)=n/\lambda_N(n) = F(n)$, equation (\ref{eqn6}) means that
\[
\mu_N(n)-\mu_N(n-1) \;>\; \frac{c}{\lambda_N(n)},
\]
for all $n$. Therefore,
\[
\sup\set{\pn{\sum_{k=1}^n \frac{e_{\gamma_k}}{\lambda_N(k)}}{N}}{n \in \N} \;<\; \infty,
\]
from which condition (2) follows.
\end{proof}

\begin{example} The Orlicz function
\[
M(t) \;=\; \begin{cases}\me^{-\frac{1}{t}} & 0 < t \leqslant \frac{1}{2} \\ 0 & t=0,\end{cases}
\]
(and extended suitably for $t>\frac{1}{2}$) satisfies the condition in Theorem \ref{Orliczcondition}. Given $K>1$, we have
\[
M\bigg(\frac{M^{-1}\big(\frac{1}{n}\big)}{K} \bigg) \;=\; \me^{-K\log(n)} \;=\; n^{-K}. \tag*{\qedsymbol}
\]
\end{example}

\section{A class of $C(K)$ spaces having approximable norms}

It is well known that if $\Gamma$ is infinite then $c_0(\Gamma)$ is isomorphic to $C(\Gamma \cup \{\infty\})$, where $\Gamma \cup \{\infty\}$ is the 1-point compactification of $\Gamma$ endowed with the discrete topology. In this section, we use Corollary \ref{cor_main_tool_2} to present a class of compact scattered (Hausdorff) spaces $K$ having the property that any equivalent norm on $C(K)$ can be approximated by both $C^\infty$-smooth norms and polyhedral norms. This class contains, for every ordinal $\alpha$, a space $K$ such that the Cantor-Bendixson derivative $K^{(\alpha)}$ of order $\alpha$ is non-empty. This result was motivated by the following problem, which remains open (see \cite{hp:14} for related results).

\begin{prob}\label{prob_density_omega_1} Let $\alpha \geqslant \omega_1$. Can every equivalent norm on $C(\alpha+1)$ be approximated by $C^1$-smooth norms?
\end{prob}

It will be more convenient for us to work with locally compact scattered spaces in the main. Given such a space $M$, we identify the dual space $C_0(M)^*$ with $\ell_1(M)$ in its natural norm $\pndot{1}$. We define a \emph{tree} to be a partially ordered set, such that the set of predecessors of each element of the tree is well-ordered. Our class consists of trees that are locally compact with respect to a certain topology. Given an ordinal $\eta$, let $q(\eta)$ denote the unique ordinal having the property that
\[
\omega q(\eta) \;\leqslant\; \eta \;<\; \omega(q(\eta)+1).
\]
Given ordinals $\alpha$ and $\eta$, define the sets
\[
M_\alpha \;=\; \set{\map{t}{n}{\omega\alpha}}{\text{$1\leqslant n < \omega$ and $t(i+1)<\omega q(t(i))$ whenever $i+1<n$}},
\]
and
\[
K_\eta \;=\; \set{(\eta)^\frown t}{t \in M_{q(\eta)} \cup \{\varnothing\}} \;\subseteq\; M_{q(\eta)+1},
\]
where $n<\omega$ is treated in the definition of $M_\alpha$ as the set of ordinals strictly preceeding $n$, and where $^\frown$ denotes concatenation of sequences. We make $M_\alpha$ into a tree by partially ordering it with respect to end-extension:~$s \preccurlyeq t$ if and only if $|s| \leqslant |t|$ and $t\restrict{|s|}\, = s$, where $|t| := \dom t$. Given $t \in M_\alpha$, the condition $t(i+1)<\omega q(t(i))$ whenever $i+1<|t|$ implies that $t$ is a strictly decreasing sequence of ordinal numbers. It follows that $M_\alpha$ is \emph{well-founded}, that is, it does not admit any infinite totally ordered subsets.

We proceed to equip $M_\alpha$ with a topology. Given $s \in M_\alpha$ and a finite set $F \subseteq s(|s|-1)$, we define the set
\[
U_{s,F} \;=\; \{s\} \cup \set{t \in M_\alpha}{s \prec t \text{ and }t(|s|) \notin F},
\]
(here, as above, $s(|s|-1)$ is treated as the set of ordinals strictly preceeding the number itself). The family $\mathscr{U}_s :=\set{U_{s,F}}{F \subseteq s(|s|-1) \text{ is finite}}$ will be a local base of neighbourhoods of $s$, and $\bigcup_{s \in M_{\alpha}} \mathscr{U}_s$ a base of our topology on $M_\alpha$. This topology agrees with the so-called \emph{coarse wedge topology}, that can be defined on an arbitrary tree. Using the Alexander Subbase Theorem, it can be shown that $M_\alpha$ is locally compact with respect to this topology \cite{n:97}. Moreover, if $\eta<\omega\alpha$, then $K_\eta$ is a compact open subtree of $M_\alpha$.

In order to eliminate a potential source of confusion, we should point out that the coarse wedge topology is in general strictly finer than another (quite commonly used) locally compact topology with which one can endow a tree, namely the \emph{interval topology}. This topology has been used in a number of results in renorming theory, including \cite[Theorem 10]{fpst:08}, which characterises exactly when an equivalent polyhedral norm exists on $C_0(T)$, where $T$ is a tree. Since the coarse wedge topology is different, that result is not comparable with the work contained in this section.

Recall that, given a scattered locally compact space $M$, the \emph{scattered height} of $M$ is the least ordinal $\Omega$ for which $K^{(\Omega)}$ is empty. It is easy to compute the Cantor-Bendixson derivatives of the spaces $M_\alpha$.

\begin{prop}Given an ordinal $\xi$, we have
\[
M_\alpha^{(\xi)} \;=\; \set{t \in M_\alpha}{\xi \leqslant q(t(|t|-1))}.
\]
Consequently, the scattered height of $M_\alpha$ equals $\alpha$.
\end{prop}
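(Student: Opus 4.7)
The proof will proceed by transfinite induction on $\xi$, with the claim about scattered height following as an easy consequence.

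The base case $\xi=0$ is immediate. For the successor case, I would assume the formula for $\xi$ and examine when $t \in M_\alpha^{(\xi+1)}$; this requires $t$ to be a non-isolated point of $M_\alpha^{(\xi)}$. First I would establish the key observation that if $t \prec s$ strictly in $M_\alpha$, then $s(|t|) < \omega q(t(|t|-1))$ by definition of $M_\alpha$; since $s(|s|-1) \leqslant s(|t|)$ (sequences in $M_\alpha$ are strictly decreasing in their values), and since $\beta < \omega q(\eta)$ forces $q(\beta) < q(\eta)$ (from $\omega q(\beta) \leqslant \beta$), we get the crucial inequality
\[
q(s(|s|-1)) \;<\; q(t(|t|-1)).
\]
For the ``$\supseteq$'' direction of the successor step, suppose $q(t(|t|-1)) \geqslant \xi+1$ and let $U_{t,F}$ be a basic neighborhood. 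Extending $t$ by any ordinal of the form $\omega\xi+n$ (where $n<\omega$) produces an element $s := t^\frown(\omega\xi+n) \in M_\alpha$, because $\omega\xi+n < \omega(\xi+1) \leqslant \omega q(t(|t|-1))$. Since $q(\omega\xi+n)=\xi$, the inductive hypothesis places $s$ in $M_\alpha^{(\xi)}$, and for cofinitely many $n$ we have $s \in U_{t,F}$. For ``$\subseteq$'', if $q(t(|t|-1)) \leqslant \xi$ then the inequality above shows every strict extension $s$ of $t$ satisfies $q(s(|s|-1))<\xi$, so no such $s$ lies in $M_\alpha^{(\xi)}$, making $t$ isolated in $M_\alpha^{(\xi)}$.

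The limit case is routine: at a limit ordinal $\xi$, $M_\alpha^{(\xi)} = \bigcap_{\eta<\xi} M_\alpha^{(\eta)}$, and intersecting the sets $\{t : q(t(|t|-1))\geqslant\eta\}$ over $\eta<\xi$ yields $\{t : q(t(|t|-1))\geqslant\xi\}$.

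For the height statement, observe that since $t(|t|-1)<\omega\alpha$ one always has $q(t(|t|-1))<\alpha$, hence $M_\alpha^{(\alpha)}=\varnothing$; conversely, for each $\beta<\alpha$ the one-element sequence $t=(\omega\beta)$ satisfies $q(t(0))=\beta$, witnessing $M_\alpha^{(\beta)}\neq\varnothing$.

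The only real subtlety is the strict decrease $q(s(|s|-1))<q(t(|t|-1))$ along proper extensions, and dually the observation that there are infinitely many $\beta<\omega q(t(|t|-1))$ with $q(\beta)=\xi$ (namely $\omega\xi, \omega\xi+1, \omega\xi+2, \ldots$) available to evade any finite set $F$; once these two arithmetic facts about $q$ are in hand, the induction runs smoothly.
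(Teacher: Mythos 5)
Your proof is correct and follows essentially the same route as the paper's: transfinite induction on $\xi$ with the same successor-step dichotomy (extend by $\omega\xi+n$ when $q(t(|t|-1))\geqslant\xi+1$; otherwise every proper extension drops below level $\xi$, so $t$ is isolated). You are in fact somewhat more explicit than the paper about the two arithmetic facts concerning $q$ and about checking the neighbourhood basis, which is a welcome addition rather than a deviation.
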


\begin{proof} We proceed by transfinite induction on $\xi$. The result is obvious if $\xi=0$. Suppose that it holds for $\xi$. Let $t \in M_\alpha^{(\xi)}$. If $\xi = q(t(|t|-1))$ then $t$ must be maximal in $M_\alpha^{(\xi)}$, because it is impossible to strictly extend $t$ to $u \in M_\alpha^{(\xi)}$ in such a way that $u(|t|) < \omega q(u(|t|-1)) = \omega \xi$. Instead, if $\xi+1 \leqslant q(t(|t|-1))$ then $t^\frown(\omega\xi+n)$, $n < \omega$, are all strict extensions of $t$ in $M_\alpha^{(\xi)}$, and thus $t \in M_\alpha^{(\xi+1)}$. The result is therefore true for $\xi+1$. The limit ordinal case follows easily.
\end{proof}

By \cite[Theorem 3.8]{lancien:95}, given a locally compact space $M$, if $C_0(M)$ is isomorphic to a subspace of $c_0(\Delta)$ for some set $\Delta$, then the scattered height of $M$ equals $n$ for some $n<\omega$. Therefore, provided $\alpha \geqslant \omega$, $C_0(M_\alpha)$ cannot embed isomorphically into any such $c_0(\Delta)$.

Given a locally compact space $M$, let $A(M)$ be the subspace $\lspan\set{\delta_t}{t \in M}\subseteq C_0(M)^*$ of finite linear combinations of Dirac functionals on $M$. All $w^*$-relatively discrete subsets of dual spaces are $w^*$-LRC \cite[Example 6 (1)]{fpst:14}. Therefore, if $M$ is a \emph{$\sigma$-discrete} set, that is, the countable union of countably many relatively discrete subsets, then $\set{\delta_t}{t \in M} \cup \{0\}$ is a $w^*$-compact and $\sigma$-$w^*$-LRC subset of $C_0(M)^*$. Consequently, $A(M)$ is $\sigma$-$w^*$-compact and $\sigma$-$w^*$-LRC, by Theorem \ref{thm_linspan}. It is easy to see that the set of elements of $M_\alpha$ having length $n$ is relatively discrete, thus each $M_\alpha$ is $\sigma$-discrete and so $A(M_\alpha)$ is $\sigma$-$w^*$-LRC and $w^*$-$K_\sigma$. 

\begin{thm}\label{thm_tree}
Let $\alpha$ be an ordinal. Then every equivalent norm on $C_0(M_\alpha)$ can be approximated by both $C^\infty$-smooth norms and polyhedral norms.
\end{thm}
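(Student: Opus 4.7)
The plan is to invoke Corollary~\ref{cor_main_tool_2} with $X = C_0(M_\alpha)$, whose dual is $\ell_1(M_\alpha)$ as $M_\alpha$ is scattered. I index the system of projections by $\Gamma = \{\eta : \eta < \omega\alpha\}$ and let $P_\eta \colon X \to X$ be $P_\eta f = f \mathbf{1}_{K_\eta}$. Each $K_\eta$ being compact open, $P_\eta$ is a norm-one projection, and the four axioms of Section~\ref{sect_approx_frame} follow routinely: (1) from the disjointness of the $K_\eta$; (2) trivially; (3) from the fact that every compact subset of $M_\alpha$ meets only finitely many of the $K_\eta$, which form an open cover; and (4) from the $\ell_1$-decomposition $g = \sum_\eta g \mathbf{1}_{K_\eta}$ of any $g \in \ell_1(M_\alpha)$.

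The finiteness of $\theta$ on $X^*$ is then essentially automatic. Since mass in $\ell_1$ is additive over disjoint supports, $\rho(F) = |F|$ for every finite $F \subseteq \omega\alpha$, the supremum being attained by a measure of the form $\sum_{\eta \in F} \delta_{t_\eta}$ with one $t_\eta \in K_\eta$ per $\eta \in F$; writing $m_k = |H_k(f)|$ and performing a summation by parts yields
\[
\sum_{k=1}^n (p_k(f) - p_{k+1}(f))|G_k(f)| \;=\; \sum_{k=1}^n p_k(f)\, m_k - p_{n+1}(f)|G_n(f)| \;\leq\; \sum_\eta \|f_\eta\|_1 \;=\; \|f\|_{\ell_1(M_\alpha)},
\]
so $\theta(f) \leq \|f\| < \infty$ for every $f \in X^*$.

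For the admissibility family I take $A_\eta := \lspan\{\delta_t : t \in K_\eta\} \subseteq P^*_\eta X^*$. Since every length-$n$ level of $M_\alpha$ is relatively discrete in the coarse wedge topology (as observed in the paragraph before the theorem), $K_\eta$ is $\sigma$-discrete, and so $\{\delta_t : t \in K_\eta\} \cup \{0\}$ is $w^*$-$K_\sigma$ and $\sigma$-$w^*$-LRC in $X^*$; the same transfers to its linear span $A_\eta$ via Theorem~\ref{thm_linspan}.

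The main technical obstacle is the admissibility of $(A_\eta)_{\eta < \omega\alpha}$. Given $\varepsilon > 0$, a finite $F \subseteq \omega\alpha$, and a $w^*$-compact norming $C \subseteq W_F = \ell_1(K)$ (with $K := \bigcup_{\eta \in F} K_\eta$ compact scattered, $V_F = C(K)$, and $A_F = A(K)$), one must construct $D \subseteq W_F$ with $C \subseteq \overline{D}^{w^*} \subseteq C + \varepsilon B_{W_F}$ and $\overline{D}^{w^*} \cap A(K)$ a James boundary of $\overline{D}^{w^*}$. The strategy is to exploit the tree structure of $K$ through the coarse wedge neighbourhood base $\mathscr{U}_t$: for any $x \in C(K)$, compactness of $K$ together with continuity of $x$ produces a finite partition of $K$ into clopen cones $V_{t_1},\ldots,V_{t_m}$ on each of which $x$ is constant up to any prescribed error, so that the mass of any $g \in C$ within each cone can be ``redistributed'' onto the cone's root to yield a finitely supported proxy agreeing with $g$ on $x$ up to a controlled error. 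Combining such cone-redistributions with $\ell_1$-truncations $g|_F$ of $g \in C$ and with small Dirac spikes of total norm up to $\varepsilon$ supported at points $t$ where $|x|$ is maximised, I plan to build $D$ whose $w^*$-closure recaptures $C$, stays within the $\varepsilon$-ball of $C$, and admits finitely supported elements attaining $\sup_{\overline{D}^{w^*}} x$ for every $x \in C(K)$. The hardest part is reconciling the norm constraint $\overline{D}^{w^*} \subseteq C + \varepsilon B$ (which forces each element of $D$ to be $\ell_1$-close to $C$) with the boundary-attainment property (which, for non-finitely-supported maximisers of $x$ on $C$, requires carefully chosen finitely supported substitutes inside the $\varepsilon$-ball); this balancing act is where the tree structure becomes essential. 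Once admissibility is in place, Corollary~\ref{cor_main_tool_2} completes the proof.
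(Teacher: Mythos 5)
The routine parts of your argument (the projections $P_\eta$, the computation $\rho(F)=|F|$ and $\theta(f)\leqslant\pn{f}{1}$, and the fact that each $A_\eta=A(K_\eta)$ is $\sigma$-$w^*$-LRC and $w^*$-$K_\sigma$) match the paper and are fine. The gap is exactly where you say the difficulty lies: the admissibility of $(A(K_\eta))_{\eta<\omega\alpha}$ is never actually proved. What you offer is a plan to build $D$ directly by partitioning $K$ into clopen cones on which $x$ is nearly constant, redistributing mass to cone roots, and adding Dirac spikes. This does not close for two reasons. First, a James boundary requires \emph{exact} attainment of $\sup_{\cl{D}^{w^*}}x$ at a finitely supported element for \emph{every} $x\in C(K)$ simultaneously; your cone redistribution only controls errors up to a prescribed tolerance, and the set of spikes needed to repair this for all $x$ at once is not shown to keep $\cl{D}^{w^*}$ inside $C+\ep B_{W_F}$, nor to make the supremum over the enlarged closure attained rather than merely approached. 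Second, for an arbitrary $w^*$-compact norming $C\subseteq\ell_1(K)$ the maximisers of $x$ on $C$ can be arbitrary measures, so producing a finitely supported exact maximiser inside an $\ep$-enlargement is essentially the full content of Theorem \ref{thm_main_tool}, not something obtainable by hand from the cone structure.

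The paper resolves this by a completely different mechanism: transfinite induction on $\alpha$. The base and limit cases are easy; in the successor step, given finite $F\subseteq\omega(\alpha+1)$, one constructs a surjective isomorphism $\map{T}{V_F}{C_0(M_\alpha)}$ with $T^*A(M_\alpha)=A_F$. This uses Proposition \ref{prop_unif_bdd} (the isomorphism $C(K)\cong C_0(K\setminus\{u\})$ whose adjoint preserves the span of Dirac functionals) together with a bijection $\pi$ that reshuffles the blocks $K_{\pi(\lambda,\eta)}$ so that each $K_\lambda$ with $q(\lambda)=\alpha$ is identified with a copy of $M_\alpha$. One then transports $C$ to $C_0(M_\alpha)^*$ via $(T^*)^{-1}$ and applies Theorem \ref{thm_main_tool} \emph{at level $\alpha$} --- legitimate because the inductive hypothesis supplies admissibility there --- to obtain the required set $D$, which is pulled back by $T^*$. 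In other words, the ``balancing act'' you identify as the hardest part is not performed directly at all; it is delegated to the already-proved main tool one level down. Without this inductive bootstrap (or a genuinely new direct construction, which you have not supplied), your proof is incomplete.
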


We set up some more machinery in order to prove Theorem \ref{thm_tree}. We will need to consider some isomorphisms. The isomorphisms stated in the result below are well known. We only include an explicit isomorphism in the proof in order to establish the additional connection between the subspaces of the duals.

\begin{prop}\label{prop_unif_bdd} Let $K$ be a compact space admitting a convergent sequence of distinct points. Then given $u \in K$, there exists a surjective isomorphism $\map{S}{C(K)}{C_0(K\setminus\{u\})}$, such that $S^*A(K\setminus\{u\})=A(K)$.
\end{prop}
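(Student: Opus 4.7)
The plan is to construct $S$ explicitly as a ``shift'' along a convergent sequence of distinct points. Let $(y_n)_{n \geq 0}$ be such a sequence in $K$ with $y_n \to y_\infty$. By passing to a subsequence and discarding finitely many terms, I may assume that $y_n \neq u$ for all $n$, and that $y_n \neq y_\infty$ for all $n$. I will treat in detail the principal case $y_\infty = u$; the case $y_\infty \neq u$ is handled by an analogous shift that reduces to the well-known isomorphism $C(L) \oplus \R \cong C(L)$ for the compact space $L = K \setminus \{u\}$, the latter itself carrying a convergent sequence of distinct points by our passage to a tail.

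In the principal case $y_\infty = u$, define $\map{S}{C(K)}{C_0(K \setminus \{u\})}$ by
\begin{align*}
Sf(y_0) &\;=\; f(u), \\
Sf(y_{n+1}) &\;=\; f(y_n) - f(u) \qquad (n \geq 0), \\
Sf(x) &\;=\; f(x) - f(u) \qquad (x \in K \setminus (\{y_n\}_{n \geq 0} \cup \{u\})).
\end{align*}
The set $\{y_n\}_{n \geq 0}$ is closed and discrete in $K \setminus \{u\}$ (its only accumulation point in $K$ is $u$), so continuity of $Sf$ on $K \setminus \{u\}$ reduces to a check at each $y_n$, which is immediate when each $y_n$ is isolated in $K \setminus \{u\}$ (as is the case in the intended application to the trees $K_\eta$). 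In the general case one refines the definition by using Urysohn cutoff functions supported in mutually disjoint open neighbourhoods of the $y_n$, all bounded away from $u$, thereby performing the shift continuously. Extending $Sf$ by $0$ at $u$ yields a continuous function on $K$, since both $f(y_n) - f(u) \to 0$ (as $y_n \to u$) and $f(x) - f(u) \to 0$ as $x \to u$ outside the sequence. The operator $S$ is bounded by $2$, and its inverse
\[
S^{-1}g(u) = g(y_0), \quad S^{-1}g(y_n) = g(y_{n+1}) + g(y_0), \quad S^{-1}g(x) = g(x) + g(y_0) \;\; (\text{other } x),
\]
is similarly bounded, so $S$ is an isomorphism onto $C_0(K \setminus \{u\})$.

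The action of the adjoint on Dirac functionals is then computed directly:
\[
S^*\delta_{y_0} = \delta_u, \qquad S^*\delta_{y_{n+1}} = \delta_{y_n} - \delta_u, \qquad S^*\delta_x = \delta_x - \delta_u \;\; (\text{other } x \in K \setminus \{u\}).
\]
The span of these images contains $\delta_u$ outright, and adding $\delta_u$ back recovers every $\delta_{y_n}$ and every $\delta_x$ for $x \in K \setminus (\{y_n\} \cup \{u\})$, whence $S^*A(K \setminus \{u\}) = A(K)$. The principal obstacle is the verification of continuity of $Sf$ when some $y_n$ fail to be isolated in $K \setminus \{u\}$; this is resolved by the Urysohn cutoff refinement mentioned above, and the essential pattern of $S^*$ on Diracs survives any such smoothing, so the dual condition is robust under these technical modifications.
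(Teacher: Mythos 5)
Your construction is essentially the one the paper uses --- a shift along the convergent sequence, with the value $f(u)$ stored at the head and the dual condition checked by computing $S^*$ on Dirac functionals --- and in the special case you treat in detail (all $y_n$ isolated and $y_n \to u$) your formulas, the inverse, and the span computation for $S^*A(K\setminus\{u\})=A(K)$ are correct. However, as written the argument has two genuine gaps. First, the case $y_\infty \neq u$ is the generic one (there need be no sequence of distinct points converging to the given $u$ at all), and your one-line reduction does not cover it: $K\setminus\{u\}$ is not compact unless $u$ is isolated, and, more importantly, the ``well-known isomorphism $C(L)\oplus\R\cong C(L)$'' says nothing about the condition $S^*A(K\setminus\{u\})=A(K)$, which is the entire content of the proposition beyond the classical isomorphism. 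In fact no case split is needed: your formulas $Sf(y_0)=f(u)$, $Sf(y_{n+1})=f(y_n)-f(u)$, $Sf(x)=f(x)-f(u)$ work verbatim when $y_\infty\neq u$, with continuity at $y_\infty$ coming from $f(y_n)\to f(y_\infty)$ instead of from $y_n\to u$; this is why the paper's single formula handles both situations at once.

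Second, the Urysohn refinement that you defer is not a technicality but the actual construction: the paper sets $Sf=f-f(u)\mathbf{1}_K-f(t_0)\phi_0+\sum_{n\geq 1}(f(t_{n-1})-f(t_n))\phi_n$ with the $\phi_n$ supported in pairwise disjoint open sets avoiding $u$, the series converging in norm because the supports are disjoint and $f(t_{n-1})-f(t_n)\to 0$. Once you do this, your claim that ``the essential pattern of $S^*$ on Diracs survives'' needs justification: for $x$ in the support of some $\phi_n$ with $x\neq y_n$, $S^*\delta_x$ acquires the extra term $\phi_n(x)(\delta_{y_{n-1}}-\delta_{y_n})$ (and similarly $-\phi_0(x)\delta_{y_0}$), so your ``add back $\delta_u$ and recover every Dirac'' argument no longer gives equality of spans directly. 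The correct conclusion is that $S^*\delta_x$ is still a \emph{finite} linear combination of Diracs (since $\phi_n(x)\neq 0$ for at most one $n$), giving $S^*A(K\setminus\{u\})\subseteq A(K)$, and one must then run the same computation for the explicit inverse to obtain the reverse inclusion, as the paper does. These are fixable omissions rather than a wrong strategy, but they are exactly the points where the general statement, as opposed to the isolated-point case relevant to the trees $K_\eta$, requires proof.
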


\begin{proof} Let  $(t_n)_{n=1}^\infty$ be a convergent sequence of distinct points and let $u \in K$. Without loss of generality, we assume that $u \neq t_n$ for all $n\in\N$. The space $C_0(K\setminus\{u\})$ identifies isometrically with the hyperplane
\[
X \;=\; \set{f \in C(K)}{f(u)=0},
\]
which we will work with instead. Let $U_n$, $n \in \N$, be a collection of pairwise disjoint open sets such that $t_n \in U_n$ and $u \notin U_n$ for all $n$, and let $\map{\phi_n}{K}{[0,1]}$ be continuous functions satisfying $\phi_n(t_n)=1$ and $\phi_n(s)=0$ whenever $s \in K\setminus U_n$. We have $\phi_n \in X$ for all $n$. Define $\map{S}{C(K)}{X}$ by
\[
Sf \;=\; f - f(u)\mathbf{1}_K - f(t_0)\phi_0 + \sum_{n=1}^\infty (f(t_{n-1})-f(t_n))\phi_n.
\]
The infinite sum is an element of $X$ because the $U_n$ are pairwise disjoint and $f(t_{n-1})-f(t_n) \to 0$ as $n \to \infty$. We see that, given $\delta_s \in X^*$, $s \in K\setminus\{u\}$,
\[
S^*\delta_s \;=\; \delta_s - \delta_u - \phi_0(s)\delta_{t_0} + \sum_{n=1}^\infty \phi_n(s)(\delta_{t_{n-1}}-\delta_{t_n}).
\]
The apparently infinite sum is really a finite sum, because $\phi_n(s)$ can be non-zero for at most one $n$. Hence $S^*A(K\setminus\{u\}) \subseteq A(K)$.

We have $Sf(t_0)=-f(u)$ and $Sf(t_n) = f(t_{n-1})$ whenever $n \geqslant 1$. Using these facts, it is easy to verify that $S^{-1}$ exists and equals $\map{R}{X}{C(K)}$, where
\[
Rg \;=\; g-g(t_0)\mathbf{1}_K + g(t_1)\phi_0 + \sum_{n=1}^\infty (g(t_{n+1})-g(t_n))\phi_n.
\]
Likewise, it can be shown that $R^*\delta_s$ is finitely supported for all $s \in K$, so $R^*A(K) \subseteq A(K\setminus\{u\})$.
\end{proof}

Given an infinite compact scattered space $K$, it is obvious that there exists a sequence of isolated points that converges to an element of the first derivative $K'$. Hence the proposition above applies to any such $K$.

Fix an ordinal $\alpha\geqslant 1$. We wish to apply Corollary \ref{cor_main_tool_2} to the space $C_0(M_\alpha)$. This means that we need to build an appropriate system of projections, an admissible family of subspaces (in the sense of Definition \ref{defn_admissible_set}), and ensure that the corresponding function $\theta$ is always finite.

We begin by setting up the projections. Given a non-empty clopen (that is, closed and open) subset $L$ of a locally compact space $M$, we define a projection $P_L$ on $C_0(M)$ by $P_Lf = f\cdot \mathbf{1}_L$, where $\mathbf{1}_L$ is the characteristic function of $L$. It could be that $M$ is a clopen subset of another locally compact space $M'$, and if so, we will use the same notation $P_L$ for the corresponding projections on $C_0(M)$ and $C_0(M')$.

It is evident that $M_\alpha$ is the discrete union of pairwise disjoint compact open sets
\[
M_\alpha \;=\; \bigcup_{\eta < \omega\alpha} K_\eta,
\] 
meaning that $C_0(M_\alpha)$ is naturally isometric to the $c_0$-sum of the spaces $C(K_\eta)$, $\eta<\omega\alpha$. Given $\eta<\omega\alpha$, let $P_\eta = P_{K_\eta}$. Having in mind the decomposition of $M_\alpha$, it is clear that $(P_\eta)_{\eta < \omega\alpha}$ fulfills properties (1)--(4) listed at the start of Section \ref{sect_approx_frame}. We will take advantage of the fact that the images $P_\eta C_0(M_\alpha)$ and $P_\eta^* C_0(M_\alpha)^*$ are naturally isometric to $C(K_\eta)$ and its dual, respectively.

Next, we determine $\theta$ and establish that it is always finite. Let $\mu \in C_0(M_\alpha)^*$. Given finite $F \subseteq \omega\alpha$, the definition of $\rho$ in (\ref{defn_rho}) becomes
\[
\rho(F) \;=\; |F|,
\]
because $\pn{\sum_{\eta \in F}\mu_\eta}{1}=\sum_{\eta \in F}\pn{\mu_\eta}{1}$. Moreover, in this case (\ref{eqn_h_less_than_theta}) becomes
\[
\pn{h_m(\mu)}{1} \;=\; \sum_{k=1}^m q_k(\mu)\pn{\omega_k(\mu)}{1} \;=\; \sum_{k=1}^m q_k(\mu)|G_k(\mu)| \;=\; \sum_{k=1}^m q_k(\mu)\rho(G_k(\mu)).
\]
By Lemma \ref{lem_theta_dominate}, the left hand side converges to $\pn{\mu}{1}$, which means that $\theta(\mu) = \pn{\mu}{1}$ and is therefore always finite, regardless of the system of projections with respect to which it is defined. For this reason, we shall remove $\theta$ from all subsequent arguments.

Finally, we define a family of subspaces that we claim is admissible. Observe that
\[
A(K_\eta) \;\subseteq\; \cl{\lspan}^{\ndot}(\delta_t)_{t \in K_\eta}\;=\; C(K_\eta)^* \;\equiv\; P^*_\eta C_0(M_\alpha)^*,
\]
whenever $\eta<\omega\alpha$. The family we consider is $(A(K_\eta))_{\eta<\omega\alpha}$.

\begin{proof}[Proof of Theorem \ref{thm_tree}] We use transfinite induction on $\alpha\geqslant 1$ to show that $(A(K_\eta))_{\eta<\omega\alpha}$ is an admissible family on $C_0(M_\alpha)$. This is trivial in the case $\alpha=1$, because $K_\eta$ is the singleton $\{(\eta)\}$ and $A(K_\eta)=P^*_\eta(M_1)^*$ whenever $\eta<\omega$. Limit cases are straightforward. Assume that $\alpha$ is a limit ordinal and that $(A(K_\eta))_{\eta<\omega\xi}$ is admissible on $C_0(M_\xi)$ whenever $1 \leqslant \xi < \alpha$. Given a finite set $F \subseteq \omega\alpha$, there is an ordinal $\xi$ such that $1\leqslant \xi < \alpha$ and $F \subseteq \omega\xi$. Hence we can apply the admissibility of $(A(K_\eta))_{\eta<\omega\xi}$ on $C_0(M_\xi)$, which embeds naturally inside $C_0(M_\alpha)$, to see that the conditions in Definition \ref{defn_admissible_set} are fulfilled.

The successor case requires more work. Assume that $\alpha\geqslant 1$ and that $(A(K_\eta))_{\eta<\omega\alpha}$ is admissible on $C_0(M_\alpha)$. Fix a finite set $F \subseteq \omega(\alpha+1)$. There is nothing to prove if $F\subseteq \omega\alpha$, so we assume that $F\setminus \omega\alpha$ is non-empty. Define the subspaces
\begin{align*}
V\;&=\; \lspan(P_\lambda C_0(M_{\alpha+1}))_{\lambda \in F} \;\equiv\; C\bigg(\bigcup_{\lambda \in F} K_\lambda\bigg),\\
W\;&=\; \lspan(P^*_\lambda C_0(M_{\alpha+1})^*)_{\lambda \in F}, \text{ and}\\
A\;&=\;  \lspan(A(K_\lambda))_{\lambda \in F} \;=\; A\bigg(\bigcup_{\lambda \in F} K_\lambda\bigg).
\end{align*}
It is clear that $W$ is naturally isometric to $V^*$, so we regard them as equal.

Let $\ep>0$, and let $C \subseteq W$ be a $w^*$-compact set that is norming with respect to $V$. In order to get the set $D$ we need to fulfil Definition \ref{defn_admissible_set}, we will define a surjective isomorphism $\map{T}{V}{C_0(M_\alpha)}$ such that
\begin{equation}\label{eqn_C(K)_condition}
T^*A(M_\alpha) \;=\; A\bigg(\bigcup_{\lambda \in F} K_\lambda\bigg) \;=\; A.
\end{equation}
Then $(T^*)^{-1}C \subseteq C_0(M_\alpha)^*$ is $w^*$-compact and norming. As $(A(K_\eta))_{\eta<\omega\alpha}$ is admissible, by Theorem \ref{thm_main_tool}, there exists $D \subseteq W$ such that 
\[
(T^*)^{-1}C \;\subseteq\; \cl{(T^*)^{-1} D}^{w^*} \subseteq (T^*)^{-1}C + \|T\|^{-1}\ep B_{C_0(M_\alpha)^*}
\]
and
\[
\cl{(T^*)^{-1} D}^{w^*} \cap A(M_\alpha) \text{ is a boundary of } \cl{(T^*)^{-1} D}^{w^*}.
\]
Using (\ref{eqn_C(K)_condition}), it follows that
\[
C \;\subseteq\; \cl{D}^{w^*} \;\subseteq\; C + \ep B_W \qquad\text{and}\qquad \cl{D}^{w^*} \cap A \text{ is a boundary of } \cl{D}^{w^*},
\]
which is what we need in order to satisfy Definition \ref{defn_admissible_set}.

All that remains is to construct the isomorphism $T$. Let $G=F \cap \omega\alpha$, $H=F\setminus \omega\alpha$ and choose a bijection
\[
\map{\pi}{G \cup (H \times \omega\alpha)}{\omega\alpha},
\]
such that $\pi(\lambda)=\lambda$ whenever $\lambda \in G$, and $q(\pi(\lambda,\eta))=q(\eta)$ whenever $(\lambda,\eta) \in H \times \omega\alpha$.

Given $\lambda \in H$, define the discrete union
\[
L_\lambda \;=\; \bigcup_{\eta < \omega\alpha} K_{\pi(\lambda,\eta)},
\]
which is a clopen subset of $M_\alpha$. Because $q(\pi(\lambda,\eta))=q(\eta)$ whenever $\eta < \omega\alpha$, it follows that the map
\[
(\eta)^\frown t \;\mapsto\; (\pi(\lambda,\eta))^\frown t, \qquad\qquad t \in M_{q(\eta)} \cup \{\varnothing\},
\]
is a homeomorphism of $K_\eta$ and $K_{\pi(\lambda,\eta)}$. Gluing together these homeomorphisms for  $\eta < \omega\alpha$ yields a homeomorphism of $M_\alpha$ and $L_\lambda$. Moreover, since $q(\lambda)=\alpha$, the map
\[
(\lambda)^\frown t \;\mapsto\; t, \qquad\qquad t \in M_\alpha,
\]
is a homeomorphism of $K_\lambda\setminus\{(\lambda)\}$ and $M_\alpha$. Together with Proposition \ref{prop_unif_bdd}, this means that there exists a surjective isomorphism
\[
\map{S_\lambda}{P_\lambda C_0(M_{\alpha+1}) \equiv C(K_\lambda)}{P_{L_\lambda}C_0(M_\alpha) \equiv C_0(L_\lambda)},
\]
such that $S^*_\lambda A(L_\lambda) = A(K_\lambda)$.

Finally, the properties of $\pi$ imply
\[
\bigcup_{\lambda \in G} K_\lambda \cup \bigcup_{\lambda \in H} L_\lambda \;=\; M_\alpha.
\]
Define $\map{T}{V}{C_0(M_\alpha)}$ by
\[
T \;=\; \sum_{\lambda \in G} P_\lambda + \sum_{\lambda \in H} S_\lambda P_\lambda.
\]
This has inverse
\[
T^{-1} \;=\; \sum_{\lambda \in G} P_\lambda + \sum_{\lambda \in H} S_\lambda^{-1} P_{L_\lambda}.
\]
Given $t \in M_\alpha$, we have $T^*\delta_t \in A(K_\lambda)$ if $t \in K_\lambda$ and $\lambda \in G$, and $T^*\delta_t \in A(K_\lambda)$ if $t \in L_\lambda$ and $\lambda \in H$. Thus $T^*A(M_\alpha) \subseteq A$. Arguing similarly using the inverse map yields equality, hence we have satisfied (\ref{eqn_C(K)_condition}), as required. The proof is complete.
\end{proof}

Thus we can apply Corollary \ref{cor_main_tool_2} to the spaces $C_0(M_\alpha)$ for all $\alpha \geqslant 1$, as promised. Our initial class, as advertised, consisted of compact spaces. Such a class can be easily arranged by considering $K_{\omega\alpha}$, which is homeomorphic to the 1-point compactification of $M_\alpha$ and thus has Cantor-Bendixson height $\alpha+1$.

We end this section with a problem. Note that if $K$ is a compact space such that $K^{(3)}$ is empty, then $C(K)$ admits an equivalent norm having a locally uniformly rotund dual norm \cite[Theorem VII.4.7]{dgz:93}. According to \cite[Theorem II.4.1]{dgz:93}, every equivalent norm on $C(K)$ can be approximated by such a norm, and all such norms are $C^1$-smooth \cite[Proposition I.1.5]{dgz:93}.

\begin{prob}\label{prob_K^2} Let $K$ be a compact space such that $K^{(3)}$ is empty? Can every equivalent norm on $C(K)$ be approximated by $C^2$-smooth norms or polyhedral norms?
\end{prob}

\end{document}